\documentclass{article}
\usepackage{amsmath,amssymb,latexsym,epsfig, amsthm,amscd}
\usepackage{palatino}
\usepackage{fancyhdr}
\input xy
\xyoption{all}
\usepackage{latexsym}

\newcommand{\bb}[1]{{\mathbb #1}}    

\newcommand{\bbR}{{\bb R}}

\newcommand{\bbZ}{{\bb Z}} 
\newcommand{\bbQ}{{\bb Q}}

\newcommand{\bbC}{{\bb C}}



\newcommand{\GL}{{\rm GL}}

\newcommand{\SL}{{\rm SL}}

 
\newcommand{\U}{{\rm U}} 

\newcommand{\SP}{{\rm Sp}}
\newcommand{\Sp}{\SP}

\newcommand{\Aut}{{\rm Aut}}

\newcommand{\Hom}{{\rm Hom}}




\newcommand{\tensor}{\otimes}

\newcommand{\id}{{\rm id}}



%
{ \par  \medskip  \noindent  {\bf Definition} \hspace{0.5em} }%
{\par \medskip }


\newenvironment {remark}%
{{\em Remark \hspace{0.5em}} }%
{\par \medskip }

\newtheorem{proposition}{Proposition}[section]
\newtheorem{definition1}[proposition]{Definition}
\newtheorem{theorem}[proposition]{Theorem}
\newtheorem*{theorem*}{Theorem}
\newtheorem{lemma}[proposition]{Lemma}
\newtheorem{corollary}[proposition]{Corollary}
{ \begin{definition1} \rm }%
{ \end{definition1}}



\newcounter{note}
\setcounter{note}{1}

\newcommand{\ra}{\rightarrow}
\newcommand{\lra}{\longrightarrow}
\newcommand{\frakD}{\mathfrak D}

\title{Virtually abelian K\"ahler and projective groups}
\author{Oliver Baues\footnote{baues@mathematik.uni-karlsruhe.de} \\
 \small Institut f\"ur Algebra und Geometrie\\
 \small  Universit\"at Karlsruhe\\
 \small  D-76128 Karlsruhe  \and 
 Johannes Riesterer\footnote{riesterer@mathematik.uni-karlsruhe.de} \\ 
 \small Institut f\"ur Algebra und Geometrie\\
\small  Universit\"at Karlsruhe\\
 \small  D-76128 Karlsruhe
 }

\begin{document}
\date{October 26, 2009}

\maketitle

\begin{abstract} \noindent
We   characterise  the virtually abelian groups which are 
fundamental groups of compact  K\"ahler manifolds and 
of smooth projective varieties. 
We show that a virtually abelian group is K\"ahler if and
only if it is projective. In particular, this allows to describe the K\"ahler condition
for such groups in terms of integral symplectic representations. 
\end{abstract}

\setcounter{section}{0}

\section{Introduction}
A connection between K\"ahler geometry and topology is made by the topic 
of K\"ahler groups. A group is called K\"ahler, if it is isomorphic to the fundamental group of a compact K\"ahler manifold. Note that K\"ahler groups are finitely generated and even finitely presented, since the associated manifold is compact by assumption.  
Not all finitely presented groups are K\"ahler groups. Well known restrictions stem from Hodge-theory, see \cite{ABKT} for various results.  

More restrictively, a compact complex manifold is called projective if it embeds holomorphically as a subvariety of complex projective space. A projective manifold is also 
a K\"ahler manifold, and its fundamental group is  called a projective K\"ahler group. 
It is widely believed that the class of K\"ahler and projective groups coincide.
An important remark is due to J.P. Serre \cite[prop.\ 15]{serre}:
\begin{theorem*}
\label{serre}
Every finite group $G$ is the fundamental group of a smooth projective variety $X_G$. 
\end{theorem*}

We would like to understand how the class of K\"ahler groups behaves with respect to finite extension of groups. Since not much seems to be known about 
this particular question, the aim of this paper is to study the situation in a specific
and accessible context: A group is called  virtually abelian if it is an extension of an abelian group by a finite group. We shall examine the following question: \emph{Which virtually abelian groups are K\"ahler and which are projective}? 

\subsection{K\"ahler and projective groups}
Let $\Gamma$ be a virtually abelian group.  
Since we also assume  $\Gamma$  to be finitely generated,
there is an exact sequence
\begin{equation}
\label{eq:fes} 
\xymatrix@1{
  0\ar[r] & \mathbb{Z}^n \ar[r]
  & \Gamma \ar[r]
  & G\ar[r] & 1}\; ,
\end{equation}
where $G$ is a finite group. Via conjugation in $\Gamma$, 
the extension 
defines the characteristic representation $\mu: G \to \GL(n,\mathbb{Z})$.  

Recall that a complex structure on $\bbR^n$ is a linear map $J: \bbR^n \ra \bbR^n$ with 
$J^2 = -\id$.  It is called $\mu$-invariant, for a representation $\mu: G \ra \GL(n,\bbR)$, if $\mu(g)J=J\mu(g)$, for all $g \in G$. 
We shall say that the virtualy abelian group $\Gamma$ admits a complex structure, if there exists
a complex structure on $\bbR^n$, which is invariant by the characteristic
representation $\mu$. 

\begin{theorem} \label{thm:Kaehler}
The group $\Gamma$ is K\"ahler if and only if it admits a complex structure.
\end{theorem}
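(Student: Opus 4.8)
The plan is to prove both implications by tying the abelian normal subgroup $\mathbb{Z}^n\trianglelefteq\Gamma$ to a complex torus and using Serre's realization theorem to absorb the finite quotient $G$.

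For the ``if'' direction, suppose $J$ is a $\mu$-invariant complex structure, so that $(\mathbb{R}^n,J)\cong\mathbb{C}^{n/2}$ and $G$ acts $\mathbb{C}$-linearly (holomorphically) through $\mu$, preserving the lattice $\mathbb{Z}^n$. First I would observe that the extension \eqref{eq:fes} splits after tensoring with $\mathbb{R}$, since $H^2(G;\mathbb{R}^n)=0$ for the finite group $G$; this embeds $\Gamma$ as a discrete subgroup of the holomorphic affine group $\mathbb{C}^{n/2}\rtimes_\mu G$, with $\mathbb{Z}^n$ acting by translations. Averaging the flat Hermitian metric over $G$ realizes $\Gamma$ as a group of holomorphic isometries acting properly discontinuously on $\mathbb{C}^{n/2}$ --- but in general not freely, since $\Gamma$ may have torsion or fixed points. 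To repair freeness I would invoke Serre's theorem to choose a simply connected smooth projective variety $Y$ carrying a free $G$-action, and let $\Gamma$ act diagonally on $\mathbb{C}^{n/2}\times Y$ (through $\Gamma\to G$ on the second factor). Any $\gamma\ne1$ fixing a point would have trivial image in $G$ by freeness on $Y$, hence be a nontrivial translation of $\mathbb{C}^{n/2}$, which is impossible; so the diagonal action is free and properly discontinuous. The quotient $X=(\mathbb{C}^{n/2}\times Y)/\Gamma$ is then a compact manifold, Kähler for the product of the averaged flat metric and a $G$-averaged Kähler metric on $Y$, and since $\mathbb{C}^{n/2}\times Y$ is simply connected it is the universal cover, giving $\pi_1(X)\cong\Gamma$.

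For the ``only if'' direction, let $X$ be a compact Kähler manifold with $\pi_1(X)\cong\Gamma$, and let $\hat X\to X$ be the finite regular covering corresponding to the normal subgroup $\mathbb{Z}^n$. Then $\hat X$ is again compact Kähler, with $\pi_1(\hat X)\cong\mathbb{Z}^n$, and its deck group is $G=\Gamma/\mathbb{Z}^n$ acting by biholomorphisms, since deck transformations of a holomorphic covering are automatically holomorphic. The plan is to extract $J$ from the Hodge structure on $H^1(\hat X,\mathbb{R})$: as $b_1(\hat X)=n$, the decomposition $H^1(\hat X,\mathbb{C})=H^{1,0}\oplus H^{0,1}$ with $\overline{H^{1,0}}=H^{0,1}$ defines a complex structure $J$ on the real $n$-dimensional space $H^1(\hat X,\mathbb{R})$. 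Because $G$ acts holomorphically it preserves this decomposition and hence commutes with $J$.

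It remains to identify this $G$-representation with $\mu$. Using $\pi_1(\hat X)=H_1(\hat X,\mathbb{Z})=\mathbb{Z}^n$, standard covering theory shows that the deck action of $G$ on $H_1(\hat X,\mathbb{Z})$ is exactly the conjugation action of $\Gamma/\mathbb{Z}^n$ on $\mathbb{Z}^n$, that is, the characteristic representation $\mu$; on $H^1$ it is the contragredient $\mu^*$. Dualizing the $G$-invariant $J$ on $H^1(\hat X,\mathbb{R})$ then yields a $\mu$-invariant complex structure on $\mathbb{R}^n=\mathbb{Z}^n\otimes\mathbb{R}$, as required. I expect the main difficulty to be bookkeeping rather than conceptual: in the forward direction one must check that the genuine group $\Gamma$, with its possibly nonsplit extension class, and not merely the semidirect product, is realized as the deck group --- which the simple connectivity of $\mathbb{C}^{n/2}\times Y$ handles automatically --- while in the converse the delicate point is to match the topologically defined monodromy action on $H_1(\hat X)$ precisely with the algebraic representation $\mu$ and to track the dualization correctly.
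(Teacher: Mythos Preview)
Your proposal is correct and follows essentially the same route as the paper: for the ``if'' direction the paper also pushes the extension out to $\mathbb{R}^n$, splits it using $H^2(G,\mathbb{R}^n)=0$, and takes the diagonal quotient of $\mathbb{C}^k\times\tilde Y$ by $\Gamma$ with $\tilde Y$ the Serre cover; for the ``only if'' direction the paper packages the Hodge structure on $H^1(\hat X,\mathbb{R})$ as the Picard variety $Pic^0(\hat X)$, but the content---a $G$-invariant complex structure coming from the Hodge decomposition, then dualized to $\mathbb{Z}^n\otimes\mathbb{R}$---is exactly what you describe. Your bookkeeping remarks about matching the deck action with $\mu$ and tracking the dualization are precisely the points the paper isolates in its Lemma~\ref{lemma:cohomologyisnat}.
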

Remark, as follows from elementary Hodge theory, if the torsionfree abelian group 
$\bbZ^n$ is K\"ahler then $n=2k$ is even. For a  K\"ahler group 
$\Gamma$,  $n$ is thus even in the above exact sequence. Theorem \ref{thm:Kaehler}
gives a more precise condition. It also shows that the K\"ahler condition
for $\Gamma$ depends only on the characteristic representation $\mu$.
In particular, it does not depend on the characteristic cohomology class of 
the extension \eqref{eq:fes}. 

We proceed  now to provide  a characterization of 
virtually abelian projective groups. For this, recall that
a two-form $\omega \in \bigwedge^2 (\bbR^n)^*$ is called a polarisation 
for $J$ (with respect to the lattice $\bbZ^n$)
if the following conditions are satisfied: 
\begin{enumerate}
\item 
$\omega(J \cdot, J \cdot ) = \omega(\cdot, \cdot)$, 
\item $g= \omega(J \cdot, \cdot)$
is positive definite,
\item 
$\omega(\bbZ^n, \bbZ^n) \subseteq \bbZ$. 
\end{enumerate}
Accordingly, if there exists a polarisation then the complex 
structure $J$ is called polarisable. 
%

\begin{theorem} \label{thm:projective}
The group $\Gamma$ is projective K\"ahler if and only if it admits a polarisable complex structure.
\end{theorem}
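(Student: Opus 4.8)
The plan is to read the two polarisation conditions as the Riemann relations that turn the complex torus $A = (\bbR^n, J)/\bbZ^n$ into an abelian variety, and then to route both implications through $A$ together with Serre's free action. Throughout, $A$ carries the affine action of $G$ coming from the extension \eqref{eq:fes}: each $g$ acts as $\mu(g)$ followed by a translation, and since $J$ is $\mu$-invariant this action is holomorphic and induces $\mu$ on $H_1(A,\bbZ) = \bbZ^n$.

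For the \emph{only if} direction, suppose $\Gamma = \pi_1(X)$ with $X$ smooth projective. Let $X' \to X$ be the finite covering corresponding to $\bbZ^n \subseteq \Gamma$, so that $X'$ is again smooth projective, carries a free $G$-action with $X'/G = X$, and has $\pi_1(X') = \bbZ^n$. The Albanese variety $\mathrm{Alb}(X')$ is then a complex torus with $H_1 = \bbZ^n$; since $X'$ is projective it is an abelian variety, so its complex structure $J$ on $H_1(X',\bbR) = \bbR^n$ is polarisable. Functoriality of the Albanese makes the induced $G$-action holomorphic and identifies its effect on $H_1$ with $\mu$, whence $J$ is $\mu$-invariant. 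This already exhibits a polarisable complex structure for $\Gamma$, and here no $G$-invariance of the polarisation is needed.

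For the \emph{if} direction, suppose $J$ is $\mu$-invariant with polarisation $\omega$. First I would average to a $G$-invariant polarisation by setting $\omega' = \sum_{g \in G} \mu(g)^* \omega$. Since each $\mu(g)$ commutes with $J$ and lies in $\GL(n,\bbZ)$, every summand again satisfies (1)--(3), so $\omega'$ satisfies (1) and (2); summing rather than averaging keeps $\omega'$ integral, giving (3), and $\mu(h)^*\omega' = \omega'$ by reindexing. Thus $A$ is an abelian variety whose polarisation class is $G$-invariant, translations acting trivially on $H^2(A)$. By Serre's theorem there is a smooth simply connected projective variety $V$ with a free $G$-action, so the diagonal action of $G$ on $A \times V$ is free. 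Hence $Y = (A \times V)/G$ is a smooth projective variety: the $G$-linearised ample bundle $\bigotimes_{g \in G} g^*(L_A \boxtimes L_V)$ descends to an ample bundle on $Y$. Its universal cover is $\bbR^n \times V$, with deck group acting by the affine action of $\Gamma$ on the first factor and through $G$ on the second; this action is free with quotient $Y$, so $\pi_1(Y) = \Gamma$ and $\Gamma$ is projective K\"ahler.

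The main obstacle is the interplay of the three requirements in the averaging step: positivity and invariance under $J$ survive pullback and convex combination, but integrality forces a sum over $G$ rather than an average, and one must check that the resulting rescaling by the positive integer $|G|$ leaves (1) and (2) intact, which it does. The remaining care is to confirm that the diagonal quotient has fundamental group exactly $\Gamma$, and not merely some extension of $G$ by $\bbZ^n$; this is why the affine $G$-action on $A$ from \eqref{eq:fes}, rather than the purely linear $\mu$-action, must be used, while Serre's free action on $V$ is precisely what removes the fixed points that would otherwise make the quotient an orbifold.
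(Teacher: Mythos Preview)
Your proof is correct and follows essentially the same strategy as the paper: the ``only if'' direction passes to the intermediate cover $X'$ with fundamental group $\bbZ^n$ and reads off the polarised $\mu$-invariant complex structure from the associated torus, while the ``if'' direction forms the diagonal quotient of an abelian variety times Serre's variety. The only cosmetic differences are that the paper uses the Picard variety of $X'$ rather than the Albanese (dual pictures), and proves projectivity of the quotient by averaging the K\"ahler class and invoking Kodaira's embedding theorem rather than by descending a $G$-linearised ample line bundle.
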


We call an integral representation symplectic if it
preserves a non-degenerate alternating 
two-form $\omega \in \bigwedge^2 (\bbZ^n)^*$. 
We can add the following characterisation of the projectivity condition
for $\Gamma$: 

\begin{theorem} \label{thm:symplectic}
The following conditions are equivalent:
\begin{enumerate}
\item $\Gamma$ admits a polarisable complex structure.
\item The characteristic representation $\mu$ for $\Gamma$ is 
$\SL_{n}(\bbZ)$-conjugate to an integral symplectic representation. 
\end{enumerate}

\end{theorem}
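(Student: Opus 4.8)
The plan is to prove the equivalence of the two conditions in Theorem \ref{thm:symplectic} by unwinding the definitions and translating the geometric polarisation data into the integral-linear-algebra statement about $\mu$. The key observation is that a polarisation $\omega$ as defined above is exactly a non-degenerate alternating two-form with values in $\bbZ$ on the lattice, that is \emph{compatible} with the complex structure $J$ in the sense of conditions (1) and (2). So the content of the theorem is that the existence of a $\mu$-invariant polarisable $J$ is equivalent to $\mu$ being symplectic up to integral conjugacy.

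\medskip

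\noindent\emph{Direction (1)$\Rightarrow$(2).} Suppose $\Gamma$ admits a polarisable $\mu$-invariant complex structure $J$, with polarisation $\omega$. First I would check that $\omega$ can be taken to be $\mu$-invariant: since $J$ is $\mu$-invariant and $\mu(G) \subseteq \GL(n,\bbZ)$ preserves the lattice $\bbZ^n$, I would average $\omega$ over the finite group $G$, setting
\[
\bar\omega \;=\; \frac{1}{|G|}\sum_{g\in G} \mu(g)^* \omega .
\]
This averaged form still satisfies (1) because $J$ commutes with each $\mu(g)$, and it still satisfies (2) since positive definiteness is preserved under averaging of forms pulled back by invertible maps commuting with $J$; the only subtlety is condition (3), integrality, which averaging destroys because of the $1/|G|$ factor. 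To repair this I would clear denominators: $|G|\,\bar\omega$ is $\mu$-invariant, integral on the lattice, non-degenerate, and still alternating, and it remains a polarisation for $J$ after rescaling the metric by the positive constant $|G|$. Thus $\mu$ preserves the non-degenerate integral alternating form $|G|\,\bar\omega$, which exactly says $\mu$ is an integral symplectic representation in the chosen basis. Rechoosing the $\bbZ$-basis of the lattice amounts to conjugating $\mu$ by an element of $\GL(n,\bbZ)$; since $n$ is even (as $J$ exists) one can arrange the conjugating matrix to lie in $\SL_{n}(\bbZ)$, for instance by composing with a coordinate swap, giving the $\SL_{n}(\bbZ)$-conjugacy asserted in (2).

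\medskip

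\noindent\emph{Direction (2)$\Rightarrow$(1).} Conversely, suppose after $\SL_{n}(\bbZ)$-conjugation $\mu$ preserves a non-degenerate alternating form $\omega \in \bigwedge^2(\bbZ^n)^*$. I want to build a $\mu$-invariant complex structure $J$ that is polarised by (a suitable rescaling of) $\omega$. The standard device is to use the averaged $\mu$-invariant inner product to polar-decompose the symplectic form: choosing a $G$-invariant inner product $g_0$ on $\bbR^n$ (again by averaging over the finite group), there is a unique $g_0$-skew operator $A$ with $\omega(\cdot,\cdot)=g_0(A\cdot,\cdot)$, and $A$ commutes with $\mu(G)$ because both $\omega$ and $g_0$ are $\mu$-invariant. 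Polar-decomposing $A$ or, equivalently, writing $A=S\,|A|$ with $|A|=\sqrt{-A^2}$ positive definite, I would set $J=A\,|A|^{-1}=S$. Then $J^2=-\id$, $J$ commutes with every $\mu(g)$ since $A$ and $|A|$ do, and the standard linear algebra of compatible triples shows that $J$ is $\omega$-compatible with $\omega(J\cdot,J\cdot)=\omega(\cdot,\cdot)$ and $\omega(J\cdot,\cdot)$ positive definite. This $J$ therefore is a $\mu$-invariant polarisable complex structure, with the integral form $\omega$ (conjugated back by the $\SL_{n}(\bbZ)$-element) serving as its polarisation, establishing (1).

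\medskip

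\noindent The step I expect to be the main obstacle is reconciling integrality with the averaging arguments: averaging over $G$ is the natural way to produce $\mu$-invariant metrics and forms, but it introduces the denominator $|G|$ and hence breaks condition (3). The care needed is to verify that after clearing denominators the rescaled form is genuinely \emph{non-degenerate} and \emph{primitive enough} to remain alternating and integral, and that the rescaling of the associated metric does not affect positive-definiteness — these are routine but must be checked. A secondary point worth confirming is that the compatible complex structure $J$ produced by the polar decomposition in (2)$\Rightarrow$(1) indeed commutes with $\mu(G)$; this follows because $A$ is a polynomial-free function of a single $\mu$-equivariant operator, so $|A|^{-1}$ and hence $J$ are again $\mu$-equivariant by functional calculus.
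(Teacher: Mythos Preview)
Your argument is essentially correct and complete, but it differs from the paper's route. The paper does not prove Theorem~\ref{thm:symplectic} directly; instead it deduces it from Theorem~\ref{thm:algequivalence}, whose proof runs through the cone $\Omega(G)$ of $G$-invariant non-degenerate two-forms. For $(1)\Rightarrow(2)$ the paper argues that $\Omega(G)$ is a non-empty open subset of a $\bbQ$-defined subspace (Lemma~\ref{lemma:Kaehlercones1}) and hence contains a rational point (Lemma~\ref{lemma:rationaldense}); this yields an invariant integral form without ever using the polarisation hypothesis (and indeed proves the stronger statement that a $\mu$-invariant complex structure alone suffices). For $(2)\Rightarrow(1)$ the paper invokes \'E.~Cartan's fixed point theorem on the Siegel upper half space $\mathfrak{D}^+_\omega = \Sp(\omega)/\U$ (Lemma~\ref{lemma:Kaehlercones2}) to produce a $G$-fixed $J$. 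Your averaging of the given polarisation in $(1)\Rightarrow(2)$ is the more direct argument for \emph{this} theorem, while the paper's density argument is what feeds into the stronger deformation results (Theorem~\ref{thm:deformation}). Your polar-decomposition construction in $(2)\Rightarrow(1)$ is an explicit realisation of Cartan's fixed point in this particular symmetric space, and is the more elementary route.

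Two small points. First, your discussion of $\SL_n(\bbZ)$-conjugacy is unnecessary: once you have produced the $G$-invariant integral form $|G|\,\bar\omega$, the representation $\mu$ \emph{itself} is already integral symplectic in the sense defined in the paper (it preserves a non-degenerate alternating form in $\bigwedge^2(\bbZ^n)^*$), so the identity serves as the conjugating element. Second, there is a sign slip in your polar decomposition: with $J = A|A|^{-1}$ one has $AJ = -|A|$, hence
\[
\omega(Ju,u) \;=\; g_0(AJu,u) \;=\; -\,g_0(|A|u,u) \;<\; 0,
\]
so $\omega(J\cdot,\cdot)$ is negative definite under the paper's convention. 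Taking $J=-A|A|^{-1}$ (or replacing $\omega$ by $-\omega$, which is equally integral and $G$-invariant) repairs this and the rest of your argument goes through unchanged.
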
 
\noindent Note that condition 2.\ does not refer to the existence of
a complex structure for $\Gamma$. 
The following consequence (see Theorem \ref{thm:algequivalence}) 
is implied:

\begin{theorem} \label{thm:kequivp}
The  group $\Gamma$ is K\"ahler if and only if it is projective K\"ahler.
\end{theorem}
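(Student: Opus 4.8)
The plan is to deduce Theorem~\ref{thm:kequivp} from the three preceding theorems, reducing it to a single statement about the characteristic representation $\mu$. One implication is immediate: a polarisable complex structure is in particular a complex structure, so by Theorems~\ref{thm:projective} and~\ref{thm:Kaehler} every projective K\"ahler $\Gamma$ is K\"ahler; equivalently, every projective manifold is K\"ahler. The content is the converse, and by Theorems~\ref{thm:Kaehler}, \ref{thm:projective} and~\ref{thm:symplectic} it amounts to the purely representation-theoretic assertion: \emph{if $\mu$ admits a $\mu$-invariant complex structure on $\bbR^n$, then $\mu$ is $\SL_n(\bbZ)$-conjugate to an integral symplectic representation.}

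First I would produce a real invariant symplectic form by averaging. Given a $\mu$-invariant complex structure $J$, I average an arbitrary inner product over $G$ to obtain a $G$-invariant positive definite symmetric form $s$, and then replace it by $s(\cdot,\cdot) + s(J\cdot, J\cdot)$ to make it simultaneously $J$-invariant; this is compatible precisely because $J$ commutes with $\mu(G)$. Then $\omega_0 := s(J\cdot, \cdot)$ is a $G$-invariant, non-degenerate alternating form on $\bbR^n$: the $J$-invariance of $s$ forces skew-symmetry, and non-degeneracy follows since $s$ is non-degenerate and $J$ is invertible.

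The main step is to descend $\omega_0$ from $\bbR$ to $\mathbb{Q}$. Let $A = (\bigwedge^2 (\bbR^n)^*)^G$ be the space of $G$-invariant alternating forms; since $\mu$ is rational and $G$ is finite, $A = A_{\mathbb{Q}} \otimes_{\mathbb{Q}} \bbR$ with $A_{\mathbb{Q}} = (\bigwedge^2(\mathbb{Q}^n)^*)^G$. Non-degeneracy is the non-vanishing of the Pfaffian, a polynomial function on $A$ with rational coefficients once a $\mathbb{Q}$-basis of $A_{\mathbb{Q}}$ is fixed. It does not vanish at $\omega_0 \in A$, hence is not identically zero; since the rational points $A_{\mathbb{Q}}$ are Zariski-dense in $A$, it is non-zero at some $\omega_1 \in A_{\mathbb{Q}}$. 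Clearing denominators makes $\omega_1$ integral on $\bbZ^n$, so $\mu$ preserves a non-degenerate alternating form over $\bbZ$; after an $\SL_n(\bbZ)$-change of basis this is an integral symplectic representation. Theorem~\ref{thm:symplectic} then yields a polarisable complex structure and Theorem~\ref{thm:projective} gives projectivity, completing the converse.

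I expect the descent from $\bbR$ to $\mathbb{Q}$ to be the only genuine obstacle: the averaged form $\omega_0$ is real and need not be rational, and the subspace of $J$-compatible invariant forms that carries the positivity of $\omega_0$ is in general not defined over $\mathbb{Q}$, so one cannot simply approximate $\omega_0$ by rational forms within it. The Pfaffian-density argument circumvents this, and it is crucial that it suffices to produce \emph{some} rational invariant symplectic form, because Theorem~\ref{thm:symplectic} reconstructs a (generally different) polarisable complex structure from the symplectic representation, so positivity need not survive the descent. As a remark, the same conclusion can be obtained more structurally by decomposing $\bbR^n$ into $G$-isotypic components and matching, division algebra by division algebra ($\mathbb{R}$, $\mathbb{C}$, $\mathbb{H}$), the condition that $M_{m}(D)$ contain an element squaring to $-\id$ (which forces even multiplicity exactly when $D = \mathbb{R}$) against the condition that an invariant non-degenerate alternating form exist on that component; these two conditions coincide, reproving the equivalence.
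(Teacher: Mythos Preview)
Your proof is correct and follows essentially the same route as the paper: reduce via Theorems~\ref{thm:Kaehler} and~\ref{thm:projective} to the implication ``invariant complex structure $\Rightarrow$ invariant rational symplectic form'', prove this by averaging a K\"ahler form and then finding a rational point in the space of $G$-invariant alternating forms, and finally invoke Theorem~\ref{thm:symplectic} (equivalently, the Cartan fixed-point step in Lemma~\ref{lemma:Kaehlercones2}) to recover a polarisable $J'$.

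The only difference worth flagging is in the density step. You argue via Zariski density: the Pfaffian is a nonzero polynomial on $A=(\bigwedge^2(\bbR^n)^*)^G$, hence does not vanish on the $\bbQ$-points. The paper's Lemma~\ref{lemma:rationaldense} is more elementary: since non-degeneracy is an \emph{open} condition, $\Omega(G)$ is a nonempty open subset of the $\bbQ$-defined subspace $A$, so it meets $A_{\bbQ}$ simply because $A_{\bbQ}$ is Euclidean-dense in $A$. Your Pfaffian argument is fine, just heavier than needed. Your closing structural remark (matching the $\bbR/\bbC/\bbH$ endomorphism types of isotypic components) is correct and is in spirit the approach of Johnson~\cite{Johnson}, which the paper takes up in Section~5 for the number-field refinement.
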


In particular, we obtain a characterisation of the K\"ahler condition
in terms of integral symplectic representations. 

\begin{corollary} \label{cor:kequivp2}
The  group $\Gamma$ is K\"ahler if and only if the characteristic representation 
$\mu$ is conjugate to an integral symplectic representation. 
\end{corollary}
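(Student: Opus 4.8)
The plan is to obtain Corollary \ref{cor:kequivp2} by concatenating the equivalences already established, the only genuine point being to replace the $\SL_{n}(\bbZ)$-conjugacy of Theorem \ref{thm:symplectic} by plain conjugacy of integral representations.

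First I would dispose of the forward implication. Assuming $\Gamma$ is K\"ahler, Theorem \ref{thm:kequivp} shows that $\Gamma$ is projective K\"ahler, Theorem \ref{thm:projective} then produces a polarisable complex structure, and finally the implication (1)$\Rightarrow$(2) of Theorem \ref{thm:symplectic} gives that $\mu$ is $\SL_{n}(\bbZ)$-conjugate to an integral symplectic representation. Since $\SL_{n}(\bbZ) \subseteq \GL_{n}(\bbZ)$, this is in particular a conjugacy, so $\mu$ is conjugate to an integral symplectic representation, as required.

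For the converse I would first reduce plain conjugacy to $\SL_{n}(\bbZ)$-conjugacy. The key observation is that the class of integral symplectic representations is stable under conjugation by $\GL_{n}(\bbZ)$: if $\rho$ preserves a non-degenerate alternating form $\omega \in \bigwedge^{2}(\bbZ^n)^{*}$ and $P \in \GL_{n}(\bbZ)$, then $P^{-1}\rho P$ preserves the pulled-back form $(P^{*}\omega)(u,v)=\omega(Pu,Pv)$, which is again non-degenerate, alternating, and integral because $P$ is an integral automorphism of $\bbZ^n$. Consequently, if $\mu = P\rho P^{-1}$ is conjugate (by some $P \in \GL_{n}(\bbZ)$) to an integral symplectic representation $\rho$, then $\mu$ itself preserves the integral symplectic form $(P^{-1})^{*}\omega$, hence $\mu$ is an integral symplectic representation and is trivially $\SL_{n}(\bbZ)$-conjugate, via the identity, to one. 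Applying the implication (2)$\Rightarrow$(1) of Theorem \ref{thm:symplectic}, $\Gamma$ admits a polarisable complex structure; Theorem \ref{thm:projective} then makes $\Gamma$ projective K\"ahler, and a fortiori K\"ahler, which completes the equivalence.

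I do not expect a real obstacle, as all the substance is carried by Theorems \ref{thm:projective}, \ref{thm:symplectic}, and \ref{thm:kequivp}. The only point requiring care is the precise meaning of \emph{conjugate}: reading it as $\GL_{n}(\bbZ)$-conjugacy of integral representations, the transport-of-form argument above shows that $\SL_{n}(\bbZ)$-conjugacy, $\GL_{n}(\bbZ)$-conjugacy, and being an integral symplectic representation outright all coincide for $\mu$, so no information is lost in passing from Theorem \ref{thm:symplectic} to the corollary.
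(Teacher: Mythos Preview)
Your proposal is correct and follows essentially the same route as the paper, which simply records the corollary as an immediate consequence of Theorems \ref{thm:projective}, \ref{thm:symplectic}, and \ref{thm:kequivp} without further argument. You are in fact more careful than the paper in distinguishing $\GL_{n}(\bbZ)$-conjugacy from the $\SL_{n}(\bbZ)$-conjugacy appearing in Theorem \ref{thm:symplectic}; your transport-of-form observation cleanly closes that gap, which the paper passes over in silence.
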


\paragraph{Fields of definition}
We may further refine the projectivity condition for $\Gamma$, by 
taking into account the possible fields of definition for projective varieties
with fundamental group $\Gamma$.  In this direction, we remark: 

\begin{theorem} \label{thm:numberfields}
If the virtually abelian group $\Gamma$ is K\"ahler 
then there exists a smooth algebraic subvariety $X$ of ${P}^N \bbC$, 
which is defined over an algebraic number field  and has 
fundamental group $\Gamma$.
\end{theorem}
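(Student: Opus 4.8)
The plan is to exploit the earlier theorems, which reduce the K\"ahler hypothesis to concrete linear-algebraic and moduli data, and then to run the standard construction of a projective variety with fundamental group $\Gamma$ while keeping every ingredient algebraic. By Theorems~\ref{thm:projective} and~\ref{thm:symplectic}, the characteristic representation $\mu$ preserves a non-degenerate integral alternating form $\omega$ and admits a $\mu$-invariant polarisable complex structure $J$. Averaging $\omega$ over $G$ and rescaling by $|G|$ (which preserves conditions (1)--(3): condition (1) because $J$ commutes with $\mu(g)$, positive-definiteness because a sum of pullbacks of positive forms is positive, and integrality because each $\mu(g)\in\GL(n,\bbZ)$ preserves the lattice) we may assume $\omega$ is integral and $G$-invariant. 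Thus the complex torus $A_J=(\bbR^n,J)/\bbZ^n$ is an abelian variety carrying a holomorphic $G$-action through $\mu$ and a $G$-invariant polarisation.

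First I would produce the affine $G$-action realising the extension class of $\Gamma$. Since $H^i(G,\bbR^n)=0$ for $i>0$, the connecting map $\delta\colon H^1(G,A_J)\to H^2(G,\bbZ^n)$ is an isomorphism, so the class of the extension~\eqref{eq:fes} is represented by a $1$-cocycle $t\colon G\to A_J$, and the associated affine action $g\cdot x=\mu(g)x+t(g)$ on $A_J$ has deck group exactly $\Gamma$. As $H^1(G,A_J)$ is killed by $|G|$ while $A_J$ is divisible, $t$ may be chosen with values in the torsion subgroup $A_J[|G|]$. This action will in general have fixed points, so to obtain a smooth quotient I would, following Serre, fix a faithful linear representation of $G$ and take a smooth complete intersection $Y\subset\mathbb{P}^m$ of dimension $\ge 2$ lying in the (open) free locus; then $Y$ is simply connected by the Lefschetz theorem, and $G$ acts freely and holomorphically on $A_J\times Y$ diagonally. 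The quotient $X=(A_J\times Y)/G$ is smooth projective, and the usual computation of $\pi_1$ (using $\pi_1(Y)=1$ and freeness) yields $\pi_1(X)\cong\Gamma$; this is precisely the projective model underlying the proof of Theorem~\ref{thm:projective}.

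The arithmetic refinement is then a matter of choosing all of this over $\overline{\mathbb{Q}}$ and descending. The representation $\mu$ and the form $\omega$ are already integral, and Serre's $Y$ together with its free $G$-action can be written down over a number field by taking $G$-invariant hypersurfaces with algebraic coefficients in general position. The essential object is $A_J$: I would view the $\mu$-invariant $\omega$-polarised complex structures as a Hermitian symmetric period domain $\frakD_\mu$ inside Siegel space, whose arithmetic quotient is a Shimura variety of PEL type defined over a number field. The special (CM) points are dense in $\frakD_\mu$ and correspond to abelian varieties defined over $\overline{\mathbb{Q}}$; since the $G$-action and the polarisation belong to the PEL datum, they too are defined over a number field, as are the torsion points $t(g)\in A_J[|G|]$. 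Choosing such a special $J$ places $A_J$, its $G$-action, the polarisation and the cocycle $t$ over a common number field $K$. Since $G$ then acts freely by $K$-automorphisms on the projective $K$-variety $A_J\times Y$, the geometric quotient $X=(A_J\times Y)/G$ exists over $K$; a power of the $G$-linearised ample bundle $L_{A_J}\boxtimes L_Y$ descends and embeds $X$ into some $\mathbb{P}^N$ over $K$. This produces a smooth variety defined over the number field $K$ with $\pi_1(X)\cong\Gamma$.

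The step I expect to be the main obstacle is the density of abelian varieties defined over $\overline{\mathbb{Q}}$ inside the $G$-invariant period domain $\frakD_\mu$: one must guarantee that the hypothesis (nonemptiness of $\frakD_\mu$, supplied by Theorem~\ref{thm:projective}) forces the existence of, and indeed a dense set of, special points whose abelian varieties carry the \emph{prescribed} $G$-action and polarisation over $\overline{\mathbb{Q}}$. I would settle this through the PEL-type Shimura structure on $\frakD_\mu$ and the density of CM points; the remaining descent of the free quotient and of the projective embedding to $K$ is comparatively routine Galois descent.
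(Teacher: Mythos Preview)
Your overall architecture matches the paper's exactly: both build $X=(T\times \tilde Y)/G$ with $T$ an abelian variety carrying the prescribed $G$-action and torsion translation cocycle, $\tilde Y$ Serre's simply connected projective variety with free $G$-action, and then invoke the quotient lemma to descend to a number field. Your treatment of the extension class via $H^1(G,A_J)\cong H^2(G,\bbZ^n)$ and the reduction to torsion translations is the same as the paper's (cf.\ the remark that ``$\Gamma$ splits over~$\bbQ$'').

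The genuine difference is how the arithmetic abelian variety $T$ is produced. You invoke the PEL Shimura formalism: the $G$-invariant polarised period domain $\frakD_\mu$ is a Hermitian symmetric space whose arithmetic quotient is a PEL-type Shimura variety, and you appeal to density of CM points to find a $J$ with $(A_J,\mu,\omega)$ defined over $\overline\bbQ$. The paper avoids this machinery entirely and constructs such a $J$ by hand: it analyses the division algebra $D=\mathrm{End}_{\bbQ[G]}(V_i)$ of each irreducible summand via Albert's classification, observes that either $D$ already contains a CM-field $F$ or $D$ is totally real (in which case the relevant summand occurs with even multiplicity and $V_i\oplus V_i$ acquires CM), and then writes down an explicit $G$-invariant complex structure $J_F$ coming from a choice of CM-type on $F$. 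The resulting torus is a product of CM abelian varieties, hence defined over a number field together with its endomorphism ring.

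Both routes are valid. Yours is shorter to state but imports the theory of Shimura varieties and the density of special points as a black box; the paper's is more self-contained and elementary (Albert's classification plus classical CM theory), and has the side benefit of making the field of definition and the complex structure completely explicit. The obstacle you flagged---existence of a CM point in $\frakD_\mu$ with the \emph{prescribed} $G$-action over $\overline\bbQ$---is precisely what the paper's direct construction circumvents: rather than finding a special point in a moduli space, it builds the CM structure out of the $\bbQ[G]$-module decomposition itself.
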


\subsubsection{Flat  and aspherical K\"ahler groups}
Flat K\"ahler manifolds are constructed 
as quotients 
$M =\bbC^k/\Gamma$, where $\Gamma \leq \bbC^k \rtimes U(k)$ is a discrete and torsionfree subgroup of the isometry group of  $\bbC^k$. By a famous theorem of Bieberbach (see, for example, \cite{wolf}),  $\Lambda = \Gamma \cap \bbC^k$ is a lattice in $\bbC^k$ and of finite index in $\Gamma$. In particular, $\Gamma$ is a virtually abelian group. 
Conversely, if $\Gamma$ is a
torsionfree virtually abelian K\"ahler group, it may as well be represented as the
fundamental group of a flat K\"ahler manifold. In fact, we obtain: 

\begin{corollary} \label{cor:flatKaehler}
Let $\Gamma$ be torsionfree. The  following conditions are equivalent:  
\begin{enumerate}
\item $\Gamma$ is a torsionfree K\"ahler group.
\item $\Gamma$ is the fundamental group of a flat K\"ahler manifold. 
\item $\Gamma$ is the fundamental group of a  flat  
K\"ahler manifold which is projective. 
\end{enumerate}
\end{corollary}

The following concept is closely related. 
Recall that a manifold is called aspherical if its universal covering
space is contractible. Henceforth, 
a K\"ahler group is called {\em aspherical} if it is the fundamental 
group of a compact aspherical K\"ahler manifold. Combining with 
results from \cite{bauescortes}, we can state: 

\begin{corollary} \label{cor:aspKaehler}
The  following conditions are equivalent:  
\begin{enumerate}
\item $\Gamma$ is an aspherical  K\"ahler group.
\item $\Gamma$ is a K\"ahler group which is torsionfree.
\end{enumerate}
\end{corollary}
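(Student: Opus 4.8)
The plan is to prove the equivalence by establishing the two implications separately, observing that almost all of the mathematical substance has already been packaged into Corollary \ref{cor:flatKaehler} (which in turn rests on \cite{bauescortes}). The direction $(2) \Rightarrow (1)$ will be obtained by realizing $\Gamma$ as the fundamental group of a flat Kähler manifold and noting that such manifolds are automatically aspherical, while $(1) \Rightarrow (2)$ is a general fact about closed aspherical manifolds and uses nothing specific to the virtually abelian or Kähler setting.

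For $(1) \Rightarrow (2)$, I would argue as follows. Suppose $\Gamma = \pi_1(N)$ for a compact aspherical Kähler manifold $N$. Since the universal cover of $N$ is contractible, $N$ is a $K(\Gamma,1)$; being a closed manifold, it is a finite-dimensional model for $B\Gamma$, so $\Gamma$ has finite cohomological dimension. A nontrivial finite subgroup would contradict this, since $H^*(\bbZ/p;\bbZ/p)$ is nonzero in every degree, forcing infinite cohomological dimension. Hence $\Gamma$ is torsionfree, and it is Kähler by hypothesis, which is precisely $(2)$. For $(2) \Rightarrow (1)$, assume $\Gamma$ is torsionfree and Kähler. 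By Corollary \ref{cor:flatKaehler}, $\Gamma$ is the fundamental group of a flat Kähler manifold $M = \bbC^k/\Gamma$, where $\Gamma$ is a torsionfree, cocompact, discrete subgroup of $\bbC^k \rtimes U(k)$; cocompactness (hence compactness of $M$) follows from Bieberbach's theorem, since $\Lambda = \Gamma \cap \bbC^k$ is a lattice of finite index in $\Gamma$. The universal cover $\bbC^k \cong \bbR^{2k}$ is contractible, so $M$ is aspherical. Thus $\Gamma$ is realized as the fundamental group of a compact aspherical Kähler manifold, establishing $(1)$.

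I do not expect a serious obstacle here: the genuine difficulty lies entirely in Corollary \ref{cor:flatKaehler} and the input from \cite{bauescortes}, and once that flat realization is granted the remaining steps are routine. The only points demanding a little care are the verification that the flat Kähler model supplied by Corollary \ref{cor:flatKaehler} is indeed compact (which I would attribute to the Bieberbach structure of $\Gamma$) and the cohomological-dimension argument ruling out torsion; both are standard. Consequently the statement of Corollary \ref{cor:aspKaehler} is, in effect, the assertion that for torsionfree virtually abelian groups ``aspherical Kähler'' and ``Kähler'' coincide because the flat realization is simultaneously the most economical aspherical one.
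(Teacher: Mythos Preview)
Your argument is correct. One small misattribution: Corollary~\ref{cor:flatKaehler} does not rest on \cite{bauescortes}; the paper derives it directly from the flat model constructed after Proposition~\ref{prop:constructX}. The citation of \cite{bauescortes} in the paper is attached specifically to Corollary~\ref{cor:aspKaehler}, and the paper gives no further proof beyond the phrase ``combining with results from \cite{bauescortes}''. Your route for $(1)\Rightarrow(2)$ via finite cohomological dimension of a closed $K(\Gamma,1)$ is the standard self-contained argument and makes any external input unnecessary for this direction; presumably the paper's reference to \cite{bauescortes} is meant to supply a structural result about aspherical K\"ahler manifolds with virtually abelian fundamental group (e.g.\ that such a manifold is already biholomorphic to a flat one), which is stronger than what the corollary actually requires. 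Your proof is thus at least as direct as, and arguably more elementary than, what the paper gestures at.
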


\subsection{Deformation of complex representations to integral symplectic representations}
The above results 
translate into the following representation 
theoretic result:

\begin{theorem} \label{thm:algequivalence} 
Let $G$ be a finite group and $V$  a finite-dimensional 
$\bbQ[G]$-module. Then the following conditions are equivalent:
\begin{enumerate}
\item $V \tensor \bbR$ has a $G$-invariant 
complex structure $J$. 
\item $V$ is isomorphic to a symplectic $\bbQ[G]$-module $W$. 
\item There exists a $G$-invariant complex structure 
$J'$ on $V \tensor \bbR$,  which is polarisable.  
\end{enumerate}
($W$ is called symplectic if it admits a non-degenerate $G$-invariant
two-form.)
\end{theorem}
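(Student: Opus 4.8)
The plan is to prove the cycle of implications $3 \Rightarrow 1 \Rightarrow 2 \Rightarrow 3$. The implication $3 \Rightarrow 1$ is immediate: a polarisable $G$-invariant complex structure is in particular a $G$-invariant complex structure, so one simply forgets the polarisation. The substance of the theorem lies in the descent to $\bbQ$ in $1 \Rightarrow 2$ and in the explicit construction of a compatible structure in $2 \Rightarrow 3$.

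For $1 \Rightarrow 2$, I would first produce a $G$-invariant non-degenerate alternating form over $\bbR$ out of the complex structure. Averaging an arbitrary inner product over the finite group $G$ yields a $G$-invariant positive definite form $g_0$ on $V \tensor \bbR$; symmetrising over $J$, the form $g(\cdot,\cdot)=g_0(\cdot,\cdot)+g_0(J\cdot,J\cdot)$ is $G$-invariant, positive definite and $J$-invariant, and then $\omega_{\bbR}(\cdot,\cdot)=g(J\cdot,\cdot)$ is $G$-invariant, non-degenerate and alternating. The key point is now to descend $\omega_{\bbR}$ to a rational form. The space $A=\bigl(\textstyle\bigwedge^2 V^*\bigr)^{G}$ of $G$-invariant alternating forms is defined over $\bbQ$ and satisfies $A\tensor\bbR=\bigl(\bigwedge^2(V\tensor\bbR)^*\bigr)^{G}$, so $\omega_{\bbR}\in A\tensor\bbR$. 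Non-degeneracy is the non-vanishing of the Pfaffian, cutting out a Zariski-open subset $U\subseteq A$ defined over $\bbQ$; since $\omega_{\bbR}\in U(\bbR)$, the set $U(\bbR)$ is a non-empty open subset of $A\tensor\bbR$, and as $A(\bbQ)$ is dense in $A\tensor\bbR$ it meets $U$. Any rational point of $U$ is a $G$-invariant non-degenerate alternating form on $V$, exhibiting $V$ as a symplectic $\bbQ[G]$-module.

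For $2 \Rightarrow 3$, let $\omega$ be a $G$-invariant non-degenerate alternating form on $V$; I would recover a compatible $G$-invariant complex structure by the polar decomposition of $\omega$ relative to a $G$-invariant metric. Choose a $G$-invariant positive definite $g$ (average over $G$) and define $A\in\mathrm{End}(V\tensor\bbR)$ by $g(Ax,y)=\omega(x,y)$; then $A$ is $g$-skew-adjoint and invertible, so $-A^2$ is symmetric positive definite. Setting $P=(-A^2)^{1/2}$ and $J'=A P^{-1}$ gives ${J'}^2=-\id$, and the standard computation shows $J'$ is compatible with $\omega$ with $\omega(\cdot,J'\cdot)$ positive definite; adjusting the sign of $J'$ matches the paper's convention that $\omega(J'\cdot,\cdot)$ be positive definite. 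Because $A$ is built from the $G$-invariant data $\omega,g$, it is $G$-equivariant, and $P$, being a function of $A^2$, is $G$-equivariant as well; hence $J'$ commutes with the $G$-action. Finally, rescaling $\omega$ by a positive integer makes it integral on the ambient lattice, so $\omega$ is a polarisation for $J'$ in the sense of conditions (1)--(3), proving $3$.

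The main obstacle is the rationality (descent) step in $1 \Rightarrow 2$: passing from a real invariant symplectic form to a \emph{rational} one is exactly what fails for modules with no invariant alternating form, and it is handled by the density argument above rather than by any structure theory of the division algebras $\mathrm{End}_{\bbQ[G]}$. I would remark that $2 \Rightarrow 3$ admits an equivalent, more conceptual phrasing: the compatible positive complex structures form the Siegel upper half space $\mathrm{Sp}(\omega_{\bbR})/U(m)$, a Hadamard space on which $G$ acts by isometries through $G\to\mathrm{Sp}(\omega)$, and a $G$-invariant $J'$ is precisely a $G$-fixed point, whose existence follows from the Cartan fixed point theorem; the polar decomposition makes this fixed point explicit and manifestly $G$-equivariant.
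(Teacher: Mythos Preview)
Your argument is correct, and for $1\Rightarrow 2$ it is essentially the paper's: produce a $G$-invariant K\"ahler form by averaging (the paper's Lemma~\ref{lemma:Kaehlercones1}), then use that the subspace of $G$-invariant alternating forms is defined over $\bbQ$ and non-degeneracy is Zariski-open, so rational points are dense in the cone $\Omega(G)$ (the paper's Lemma~\ref{lemma:rationaldense} and Proposition~\ref{prop:symplectic}).

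The genuine differences are in $2\Rightarrow 3$ and in the overall organisation. For $2\Rightarrow 3$ the paper obtains a $G$-fixed compatible complex structure via Cartan's fixed-point theorem on the Siegel upper half space $\mathfrak D^+_\omega$ (Lemma~\ref{lemma:Kaehlercones2}), which you mention as an alternative; your primary route is the explicit polar decomposition $J'=\pm A(-A^2)^{-1/2}$, which is more elementary and makes the $G$-equivariance of $J'$ immediate. More significantly, the paper establishes $1\Leftrightarrow 3$ by invoking the full deformation theorem (Theorem~\ref{thm:deformation}), which in turn rests on the curve-lifting and barycenter machinery of \S\ref{Cartan}. Your cycle $3\Rightarrow 1\Rightarrow 2\Rightarrow 3$ sidesteps that machinery entirely and gives a shorter self-contained proof of the equivalence. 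What you lose is precisely what the deformation theorem buys: the density of the polarisable locus $\mathfrak D(G)_p$ in $\mathfrak D(G)$ and the statement that any given $J$ can be continuously deformed to a polarisable one, neither of which follows from the bare equivalence.
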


The method of our proof allows to show slightly more. 
Let  $\mathfrak{D}(G)$ denote the set of $G$-invariant complex structures
on $V \tensor \bbR$. Then the implication from 1.\ to 3.\ in Theorem \ref{thm:algequivalence} will be implied by

\begin{theorem} \label{thm:deformation} 
Let $G$ be a finite group and $V$  a finite-dimensional 
$\bbQ[G]$-module such that $V \tensor \bbR$ has a $G$-invariant 
complex structure $J$. Then $J$ may be continuously deformed in 
$\mathfrak{D}(G)$ to a $G$-invariant complex structure $J'$,  
which is polarisable. Moreover, the subset $\mathfrak{D}(G)_{p}$ 
of polarisable elements in   $\mathfrak{D}(G)$ is dense in $\mathfrak{D}(G)$.
\end{theorem}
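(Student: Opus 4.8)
The plan is to prove the density statement first, since $\mathfrak{D}(G)$ is a smooth (real semialgebraic) manifold; once the polarisable structures are known to be dense, any given $J$ can be joined to a nearby polarisable $J'$ by a short path in $\mathfrak{D}(G)$, which realises the asserted continuous deformation. So I concentrate on showing that $\mathfrak{D}(G)_p$ is dense in $\mathfrak{D}(G)$.

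First I would produce, for the given $J$, a \emph{real} compatible positive form. Averaging an arbitrary inner product over the finite group $G$, and then over $J$, yields a $G$-invariant, $J$-invariant, positive definite metric $g_0$; setting $\omega_0(x,y)=g_0(x,Jy)$ gives a $G$-invariant form that is alternating and satisfies conditions (1) and (2) for $J$. The only condition that may fail is integrality: $\omega_0$ need not lie in the rational space $W:=(\bigwedge^2 V^*)^G$. Since $W\tensor\bbR=(\bigwedge^2 (V\tensor\bbR)^*)^G$ and $W$ is a $\bbQ$-vector space, the rational forms $W$ are dense in the real $G$-invariant forms $W\tensor\bbR$. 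The problem is therefore to replace $\omega_0$ by a nearby rational form while simultaneously moving $J$ so that compatibility is restored.

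I would make this precise with the implicit function theorem. Consider
$$\Psi\colon \mathfrak{D}(G)\times (W\tensor\bbR)\lra W\tensor\bbR,\qquad \Psi(J,\omega)=\omega-\omega(J\cdot,J\cdot).$$
A $G$-invariant form decomposes into its $J$-invariant $(1,1)$-part and its anti-invariant $(2,0)+(0,2)$-part, on which $\omega\mapsto\omega(J\cdot,J\cdot)$ acts as $+1$ and $-1$; hence $\Psi(J,\omega)=2\,\omega^{\mathrm{anti}}$, so $\Psi(J,\omega)=0$ exactly when $\omega$ is compatible with $J$, and $\Psi(J,\omega_0)=0$. I would then show that the partial derivative $\partial_J\Psi$ at $(J,\omega_0)$, restricted to $T_J\mathfrak{D}(G)=\{\dot J\in\operatorname{End}_G(V\tensor\bbR): \dot J J+J\dot J=0\}$, is surjective onto the space of $G$-invariant anti-invariant forms (the effective target of $\Psi$ near the solution). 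Using $g_0$ to identify forms with endomorphisms, this reduces to the standard fact that perturbing a compatible complex structure surjects onto the anti-invariant forms. Granting surjectivity, the implicit function theorem produces, for every $\omega$ near $\omega_0$, some $J(\omega)\in\mathfrak{D}(G)$ near $J$ with $\omega$ compatible with $J(\omega)$; positivity persists by openness (condition (2) is open and holds at $(J,\omega_0)$), so $\omega$ polarises $J(\omega)$. Choosing $\omega\in W$ rational and arbitrarily close to $\omega_0$ then yields a polarisable $J(\omega)$ arbitrarily close to $J$, and the straight-line path $t\mapsto J(\omega_0+t(\omega-\omega_0))$ realises the deformation.

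The hard part is the surjectivity of $\partial_J\Psi$ on $G$-invariants. I expect to handle it by an isotypic/Schur reduction: $\operatorname{End}_G(V\tensor\bbR)$ is a product of matrix algebras over $\bbR$, $\bbC$, $\mathbb{H}$, the splitting of $G$-forms into $(1,1)$ and anti-invariant parts is $G$-equivariant, and on each block the underlying linear-algebra computation (which over $\bbR$ compares the large tangent space of conjugate-linear endomorphisms with the smaller space of anti-invariant forms) survives the passage to invariants, because every operator involved is $G$-equivariant and averaging is a projection onto the invariants. A secondary point to verify is that non-degeneracy of $\omega$ is automatic, being forced by condition (2) and inherited by continuity from $\omega_0$. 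As a robustness check and alternative route, I would note that $\GL_{G}(V\tensor\bbR)=(\operatorname{End}_G(V)\tensor\bbR)^\times$ acts on $\mathfrak{D}(G)$ by conjugation with open orbits, its rational points $\operatorname{Aut}_{\bbQ[G]}(V)$ are dense, and if $AJA^{-1}$ is polarised by a rational $\omega$ then $A'J(A')^{-1}$ is polarised by $(A(A')^{-1})^*\omega$ for rational $A'$; this gives an independent proof of density once a single polarisable point is located in each orbit.
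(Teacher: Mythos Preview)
Your route is genuinely different from the paper's and is essentially correct. The paper never invokes the implicit function theorem; instead it lifts a path $\omega_t$ in the cone $\Omega(G)$ of $G$-invariant nondegenerate forms (from the K\"ahler form $\omega_0$ of $J$ to a nearby rational $\omega_1$) to a path of $G$-invariant complex structures by a symmetric-space argument: writing $\omega_t=A_t^*\omega_0$, one has $A_tGA_t^{-1}\subset\Sp(\omega_0)$, and the \emph{barycenter} of the orbit $A_tGA_t^{-1}\cdot J$ in the Siegel domain $\mathfrak D^+_{\omega_0}$ (Cartan's fixed-point theorem, made continuous in $t$) furnishes $J_t\in\mathfrak D(G)$ with K\"ahler form $\omega_t$. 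That buys a global, computation-free construction; your IFT approach is local but more direct. Your surjectivity claim is true and in fact needs no isotypic reduction: if $\alpha(x,y)=g_0(Ax,y)$ with $A$ $g_0$-skew, $G$-equivariant, and anticommuting with $J$, then $\dot J=-\tfrac12 A$ lies in $T_J\mathfrak D(G)$ and $\partial_J\Psi(\dot J)=\alpha$.

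Two points to tighten. First, $\Psi$ maps to all of $W\tensor\bbR$, while the image of $D\Psi$ at $(J,\omega_0)$ is only the $J$-anti-invariant subspace, so IFT does not apply to $\Psi$ as written; the phrase ``effective target'' hides a step. The standard fix is to compose with the fixed projector $P_J$ onto $J$-anti-invariant forms, apply IFT to $P_J\circ\Psi$, and note that for $J'$ near $J$ the map $P_J$ restricts to an isomorphism from the $J'$-anti-invariant forms to the $J$-anti-invariant ones, so $P_J\Psi(J',\omega)=0$ forces $\Psi(J',\omega)=0$. Second, your alternative route is miswritten: the form $(A(A')^{-1})^*\omega$ involves the possibly irrational $A$ and need not be rational. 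The correct statement is that if $J_1$ is polarised by a rational $\omega$, then $BJ_1B^{-1}$ is polarised by the rational $(B^{-1})^*\omega$ for every $B\in\Aut_{\bbQ[G]}(V)$; density of such $B$ in $\GL_G(V\tensor\bbR)$ then gives density of polarisable structures in the $\GL_G$-orbit of $J_1$, but one still needs a polarisable point in each orbit as input.
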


\subsubsection{Deformation of complex structures on torus quotients}
A complex torus $T = \bbC^k/\Lambda$ which is projective is 
an abelian variety. By the usual dictionary between complex tori (resp.\ abelian varieties) and complex structures (resp.\ polarisable complex structures), the previous
results can be translated into statements about complex tori with a holomorphic
$G$-action: \\

Let $G$ be a finite group of automorphisms of a complex 
torus $T$, which acts freely on $T$.  Let $X_{G} = T/G$ be the quotient 
complex manifold. Then we may view $\mathfrak{D}(G)$ as the deformation
space of complex structures for $X_{G}$. Thus, 
Theorem  \ref{thm:deformation} implies

\begin{proposition} \label{prop:deformation2}
The complex structure on $X_{G}$ may be continuously deformed 
to a projective structure.
\end{proposition}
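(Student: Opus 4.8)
The plan is to reduce Proposition \ref{prop:deformation2} to Theorem \ref{thm:deformation}, which has already done the analytic and representation-theoretic work. The key is to set up the dictionary between the quotient manifold $X_G = T/G$ and the data $(G,V)$ appearing in Theorem \ref{thm:deformation}. Write $T = \bbR^{2k}/\Lambda$ where $\Lambda \cong \bbZ^{2k}$ is the period lattice and the complex structure on $T$ is induced by a fixed $G$-invariant complex structure $J_0$ on $\bbR^{2k}$. Since $G$ acts freely by holomorphic automorphisms of $T$, each $g \in G$ lifts to an affine transformation of $\bbR^{2k}$ whose linear part preserves both $\Lambda$ and $J_0$; passing to the linear parts gives a representation of $G$ on $\Lambda$, hence on $V := \Lambda \tensor \bbQ$, and $J_0$ becomes a $G$-invariant complex structure on $V \tensor \bbR = \bbR^{2k}$. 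Thus $J_0 \in \frakD(G)$ in the sense of Theorem \ref{thm:deformation}.

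First I would make precise the identification of $\frakD(G)$ with the deformation space of complex structures on $X_G$. A point $J \in \frakD(G)$ is a $G$-invariant complex structure on $\bbR^{2k}$; it endows $T = \bbR^{2k}/\Lambda$ with a complex structure for which $G$ still acts holomorphically (the $G$-action is by the fixed affine maps, whose linear parts commute with $J$ by the invariance condition), and since the $G$-action remains free, the quotient $X_G^J := T_J/G$ is again a compact complex manifold. This gives the family of complex structures on the underlying smooth manifold $X_G$ parametrised by $\frakD(G)$; I would note that a continuous path in $\frakD(G)$ induces a continuous family of such complex structures, i.e.\ a deformation in the usual sense.

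Next I would invoke Theorem \ref{thm:deformation} with this $V$ and $J = J_0$: it provides a continuous path $J_t$ in $\frakD(G)$, $t \in [0,1]$, from $J_0$ to a polarisable $G$-invariant complex structure $J' = J_1$. It remains to argue that the polarisability of $J'$ translates into projectivity of $X_G^{J'}$. Here the standard dictionary enters: a $G$-invariant polarisation $\omega$ for $J'$ (an integral alternating form satisfying the Riemann relations of the polarisation conditions 1.--3.\ stated before Theorem \ref{thm:projective}) makes $T_{J'}$ an abelian variety, and because $\omega$ is $G$-invariant, the induced ample class on $T_{J'}$ descends to an ample line bundle on the quotient $X_G^{J'}$; by Kodaira's embedding theorem $X_G^{J'}$ is then projective. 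Transporting this polarisation along the deformation shows that the complex structure on $X_G$ has been continuously deformed to a projective one, which is the claim.

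The main obstacle I anticipate is not the deformation itself, since that is handed to us by Theorem \ref{thm:deformation}, but the descent of the polarisation from $T_{J'}$ to the quotient $X_G^{J'}$. One must check that $G$-invariance of $\omega$ genuinely yields a polarisation on $X_G^{J'}$ rather than merely on a finite cover, and in particular that the ample line bundle on $T_{J'}$ carrying the polarisation can be chosen $G$-linearised so as to descend to $X_G^{J'}$. Because the $G$-action is free, descent of line bundles along the \'etale quotient map is available, but the linearisation requires taking a suitable power of the line bundle so that the $G$-action on it is trivial on fibres over the quotient; I would isolate this linearisation argument as the technical heart of the proof, while treating the passage from an ample class to projectivity via Kodaira embedding as standard.
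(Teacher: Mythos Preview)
Your reduction to Theorem \ref{thm:deformation} is exactly how the paper proceeds: in fact the paper gives no separate proof at all, simply stating that Proposition \ref{prop:deformation2} is implied by Theorem \ref{thm:deformation} together with the dictionary between $\mathfrak{D}(G)$ and deformations of $X_G$ that you spell out.

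The one place you diverge is the final step, where you worry about $G$-linearising an ample line bundle on $T_{J'}$ so that it descends to $X_G^{J'}$. This is an unnecessary complication. The paper's Lemma \ref{lemma:fquotients} (the quotient lemma) already shows, purely at the level of rational K\"ahler classes and Kodaira's theorem, that any finite holomorphic quotient of a projective manifold is projective: one averages the K\"ahler form over $G$ to get a $G$-invariant rational class and then uses that $p^*$ is an isomorphism onto $H^2(\bar M,\bbR)^G$. So once $T_{J'}$ is an abelian variety (which is all that polarisability of $J'$ gives you), projectivity of $X_G^{J'}=T_{J'}/G$ is immediate, with no need to track $G$-invariance of the specific polarisation or to pass through linearisations and descent of line bundles.
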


We call a complex manifold \emph{deformation rigid} 
if it does not admit any non-trivial 
continuous deformation of its complex structure. 
The following application 
concerning K\"ahler rigidity was suggested to us by F.\ Catanese:

\begin{corollary} \label{cor:rigidity} 
Let $X$ be a complex variety which is a finite quotient of
a complex torus. If $X_{G}$ is deformation rigid then $X_{G}$ is 
a quotient of an abelian variety, in particular,  $X_{G}$ is projective.  
\end{corollary}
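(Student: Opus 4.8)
The plan is to derive the corollary directly from Proposition \ref{prop:deformation2}, playing it against the rigidity hypothesis. First I would fix the data: write $X_{G} = T/G$ with $T = \bbC^{k}/\Lambda$ a complex torus and $G$ a finite group acting freely and holomorphically, and record that under the dictionary between complex tori and complex structures the manifold $X_{G}$ corresponds to a point $J$ in the deformation space $\frakD(G)$ of $G$-invariant complex structures on $V \tensor \bbR$, where $V = \Lambda \tensor \bbQ$ carries the induced $\bbQ[G]$-module structure. The point to emphasise is the one already asserted in the discussion preceding the statement: a continuous path in $\frakD(G)$ is the same datum as a continuous deformation of the complex manifold $X_{G}$, with the polarisable points corresponding exactly to those structures for which $T/G$ is a quotient of an abelian variety.

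Next I would invoke Proposition \ref{prop:deformation2}: the complex structure $J$ on $X_{G}$ admits a continuous deformation $\{J_{t}\}_{t \in [0,1]}$ with $J_{0} = J$ to a projective (polarisable) structure $J_{1}$. A priori this is a nontrivial continuous deformation of $X_{G}$. The rigidity hypothesis now forces it to be trivial: since $X_{G}$ is deformation rigid, every member $J_{t}$ of the family yields a complex manifold biholomorphic to $X_{G}$. In particular the endpoint $X_{G}' = T_{1}/G$, built from the polarisable structure $J_{1}$, is biholomorphic to $X_{G}$.

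Finally I would conclude. Because $J_{1}$ is polarisable, the torus $T_{1}$ carrying it is an abelian variety, so $X_{G}' = T_{1}/G$ is the quotient of an abelian variety by a free finite group action and is therefore projective. As $X_{G}$ is biholomorphic to $X_{G}'$, the same holds for $X_{G}$: it is a quotient of an abelian variety and is projective.

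The main obstacle is not the logical skeleton, which is short, but pinning down the precise meaning of \emph{deformation rigid} and checking that the one-parameter family produced by Proposition \ref{prop:deformation2} genuinely qualifies as a continuous deformation to which rigidity applies. Concretely, I would need to verify that the path $\{J_{t}\}$ assembles into a flat family of complex manifolds over the interval (or a small disk), so that rigidity, interpreted as the statement that any such family is isotrivial, can be invoked to conclude that $J_{0}$ and $J_{1}$ give biholomorphic quotients. Once the identification of $\frakD(G)$ with the deformation space of $X_{G}$ from the preceding paragraph is taken as given, this verification is essentially bookkeeping, and the remaining implications follow immediately.
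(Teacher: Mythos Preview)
Your proposal is correct and is exactly the argument the paper has in mind: the corollary is stated immediately after Proposition~\ref{prop:deformation2} without a separate proof, being an evident consequence of that proposition together with the definition of deformation rigidity. The only point you flag as a potential obstacle---that the path in $\mathfrak{D}(G)$ really is a deformation of $X_{G}$ in the required sense---is precisely what the paper packages into the sentence ``we may view $\mathfrak{D}(G)$ as the deformation space of complex structures for $X_{G}$'' just before Proposition~\ref{prop:deformation2}, so you are invoking nothing beyond what the paper already grants.
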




\subsection{Notes}
Theorem \ref{thm:Kaehler} appears as the main result of
the diploma thesis \cite{riesterer}. The analogue of Theorem \ref{thm:Kaehler}
concerning flat K\"ahler manifolds, and the equivalence 
of 2) and 3) in Corollary \ref{cor:flatKaehler} are the main
theorems in Johnson's paper \cite{Johnson}. 

Serre \cite{Serre2} constructed certain examples of
subvarieties $X$ of complex projective space, which
are defined over algebraic number fields, and whose 
fundamental groups are semi-direct extensions of $\bbZ^n$ by cyclic
groups.  Our Theorem \ref{thm:numberfields} extends this 
construction without the restriction on the fundamental
group. In \cite{Serre2},  the construction is used to exhibit 
Galois-conjugate examples $X$ and $X'$
of this type with non-isomorphic fundamental group. 

The equivalence of 1.\ and 3.\ in 
Theorem \ref{thm:algequivalence} is also contained 
in \cite{Johnson}. The proof is based on a different 
approach, using the theory of complex multiplication.  
However, we will pick up this idea in the proof of Theorem 
\ref{thm:numberfields}. 

In \cite{popovzarhin} irreducible symmetry groups of complex tori are 
investigated from the point of view of complex reflection groups.
In particular, examples of non-\- polarisable tori with a 
non-trivial symmetry group are considered. 

By Corollary \ref{cor:kequivp2}, virtually abelian K\"ahler groups 
of rank $n=2k$ can be classified in principle by the determination of the 
finite subgroups of  $\Sp(2k, \bbZ)$. For $k=1$, this list
comprises the finite cyclic groups of order 2,3,4,6 (see \cite{zieschang}, for example).
Fujiki \cite{fujiki} classifies the finite automorphism groups of 
abelian surfaces, the three dimensional case is
covered in \cite{BGL}.
 
\subsection{Structure of the paper}
The proofs of Theorem \ref{thm:Kaehler} and Theorem \ref{thm:projective}
can be found in sections 2 and 3, see Proposition \ref{vak_prop_2} and Proposition 
\ref{prop:constructX}.
In section 4 we prove Theorem \ref{thm:algequivalence} (see in particular \S \ref{rationalpoints})
and Theorem \ref{thm:deformation}. 
The latter imply Theorem \ref{thm:symplectic} and Theorem \ref{thm:kequivp}. 
Section 5 is devoted to the proof of Theorem \ref{thm:numberfields}. \\

\vspace{0.5cm}
{\bf Acknowledgements} The first named author learned several years ago 
from A.\ Szczepa\'nski about Johnson's  and Rees' results on flat 
K\"ahler and projective manifolds. Fritz Grunewald informed us that 
he has characterised virtually abelian fundamental groups of projective
algebraic varieties in terms of integral symplectic representations, 
this question playing a role in joint work with Ingrid Bauer and Fabrizio 
Catanese. We wish to thank all three of them for various discussions. 
Part of this paper was written up
during a stay of the first named author at the research activity 
'Groups  in algebraic geometry', Centro Ennio di Georgi, Pisa, 2008. 
He wishes to thank the organisers for the hospitality and support.
Furthermore we wish to thank Eduard Loojenga
for a helpful comment, as well as Vicente Cort\'es, Evelina Viada 
and Enrico Leuzinger for conversations regarding the context of this paper.
We thank Stefan K\"uhnlein for commenting on a draft of section 5. \\


\section{The Picard variety and induced Hodge structure on 
the fundamental group}
\label{hodge_structure}
In this section,  we prove that every virtually abelian K\"ahler group $\Gamma$ admits an invariant complex structure $J$. Furthermore we show, if $\Gamma$ is projective then it admits a polarised complex structure $J'$.  

\paragraph{Conventions concerning cohomology groups}
Let $X$ be a space, and $A$ an abelian group. Then
$H^j_{sing}(X,A)$ denotes the singular cohomology groups
with coefficients in $A$. Moreover, if $X$ is a manifold and $A$
is a subgroup of $\bbC$,
we let $H^j(X,A)$ denote the \emph{image} of 
$H^j_{sing}(X,A)$ in the de Rham cohomology group 
$H^j(X,\bbC) := H^j_{DR}(X,\bbC)$ under the de Rham isomorphism
$H^j_{sing}(X,\bbC) \ra H^j_{DR}(X,\bbC)$. Thus, in particular, 
$H^n(X, \mathbb{Z})$ denotes the full lattice in $H^n(X, \mathbb{R})$, which 
is the image of $H^n(X, \mathbb{Z})_{sing}$ under the coefficient homomorphism.  
For any continuous (respectively 
smooth) map $f: X \ra X'$, there is a well defined induced map on cohomology groups  $f^*: H^j(X', \mathbb{R}) \to H^j(X, \mathbb{R})$, such that $f^*(H^j(X', A)) \subset H^j(X, A)$. 

\subsection{Review of Picard varieties}
We start by reviewing the definition of Picard varieties.
Let $X$ be a K\"ahler manifold and  $H^1(X,\mathbb{C}) = H^{1,0}(X) \oplus H^{0,1}(X)$ the Hodge decomposition. 
Then there is an associated  isomorphism of real vector spaces 
\begin{eqnarray*}
&&\Re:   H^{0,1}(X) \to H^1(X,\mathbb{R}) , \\
&&\Re \, \omega  = \frac {\omega + \bar \omega}{2},  \; \; \omega \in H^{0,1}(X)  \; . 
\end{eqnarray*}
Thus  $J:= \Re I \Re^{-1}$ is a complex structure on $H^1(X,\mathbb{R})$, where $I$ 
denotes multiplication with $i$ on the complex vector space $H^{0,1}(X)$.
The complex torus 
$$Pic^0(X):= (H^1(X,\mathbb{R}) / H^1(X, \mathbb{Z}),J)\; ,$$ is called the Picard variety of $X$. By construction,   $Pic^0(X)$ is biholomorphic to $H^{0,1}(X)/ H^1(X, \mathbb{Z})$, which yields an equivalent definition. For details, see \cite[11.11]{birken} or \cite[7.2.2]{voisin}.

\begin{lemma}
\label{vak_prop_1}
 $Pic^0$ is functorial, that is, if $f:X \to X'$ is a holomorphic map between K\"ahler manifolds, then $f$ induces a holomorphic map $f^*: Pic^0 (X') \to Pic^0 (X)$. 
\end{lemma}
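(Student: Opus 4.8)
The plan is to show that a holomorphic map $f : X \to X'$ induces, functorially, a holomorphic map $f^* : \mathrm{Pic}^0(X') \to \mathrm{Pic}^0(X)$ between the two Picard tori. The strategy is to produce the map at the level of the underlying real tori from the pullback on cohomology, verify it respects the lattices, and then check that it is compatible with the complex structures $J$ (equivalently, holomorphic with respect to the Hodge decompositions). Functoriality will then follow from the functoriality of the cohomology pullback.

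First I would use the induced map on first cohomology. For a holomorphic (hence smooth) map $f : X \to X'$ there is a well-defined pullback $f^* : H^1(X',\mathbb{R}) \to H^1(X,\mathbb{R})$ which, by the conventions recalled in the excerpt, satisfies $f^*\bigl(H^1(X',\mathbb{Z})\bigr) \subseteq H^1(X,\mathbb{Z})$. Thus $f^*$ descends to a group homomorphism between the real tori $H^1(X',\mathbb{R})/H^1(X',\mathbb{Z}) \to H^1(X,\mathbb{R})/H^1(X,\mathbb{Z})$. This handles the lattice condition and gives a morphism of real tori; the remaining content is that this morphism is holomorphic for the complex structures defining $\mathrm{Pic}^0$.

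The main point — and the step I expect to be the real obstacle — is holomorphicity: I must check that $f^*$ commutes with the complex structures $J_{X'}$ and $J_X$, i.e. $f^* \circ J_{X'} = J_X \circ f^*$. Here the key fact is that for a holomorphic map the pullback on $H^1(\cdot,\mathbb{C})$ preserves the Hodge decomposition, carrying $H^{0,1}(X')$ into $H^{0,1}(X)$ (and $H^{1,0}$ into $H^{1,0}$). Since $J$ was defined as $\mathfrak{Re}\, I\, \mathfrak{Re}^{-1}$, where $I$ is multiplication by $i$ on the $(0,1)$-part and $\mathfrak{Re}$ is the canonical real isomorphism $H^{0,1} \to H^1(\cdot,\mathbb{R})$, I would show that $f^*$ intertwines the two copies of $\mathfrak{Re}$ and commutes with multiplication by $i$ on the $(0,1)$-parts. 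Concretely, $f^*$ is complex-linear on $H^1(\cdot,\mathbb{C})$, it preserves complex conjugation (being the complexification of a real map) and preserves the $(0,1)$-summands, so it commutes with the real-part maps, and on the $(0,1)$-parts it commutes with $I$ because it is $\mathbb{C}$-linear. Combining these gives $f^* J_{X'} = J_X f^*$, which is exactly holomorphicity of the descended torus map.

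Finally I would record functoriality: the identity map induces the identity, and for composable holomorphic maps $g : X' \to X''$ and $f : X \to X'$ one has $(g \circ f)^* = f^* \circ g^*$ on cohomology, hence the same relation for the induced torus maps, so $\mathrm{Pic}^0$ is a contravariant functor. The only genuinely nontrivial input is the preservation of the Hodge type $(0,1)$ under pullback along a holomorphic map, which is a standard consequence of the fact that pullback of forms along a holomorphic map commutes with the decomposition into $(p,q)$-types; everything else is formal.
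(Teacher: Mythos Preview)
Your proposal is correct and follows essentially the same approach as the paper: produce the induced map on real tori via pullback on $H^1$ (noting it preserves the integral lattice), then check holomorphicity by using that $f^*$ preserves the Hodge decomposition and commutes with the real-part map $\Re$, whence $f^* J' = J f^*$. Your additional remark on functoriality under composition and identity is a minor elaboration the paper leaves implicit.
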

\begin{proof}
Let $X, X'$ be K\"ahler manifolds and $f:X \to X'$ a holomorphic map. 
There is an induced map $f^*: H^1(X', \mathbb{R}) \to H^1(X, \mathbb{R})$ on the cohomology, such that $f^*(H^n(X, \mathbb{Z})) \subset H^n(X, \mathbb{Z})$. Hence,  $f^*$ induces  a homomorphism from $Pic^0(X')$ to $Pic^0(X)$. 

Now we show, that $f^* : Pic^0(X') \to Pic^0(X)$ is holomorphic.
For this consider the induced complex linear map   $f^*: H^1(X', \mathbb{C}) \to H^1(X, \mathbb{C})$, given by $f^*(\omega \otimes z):= f^*\omega \otimes z$. 
Since $f$ is holomorphic, it preserves the Hodge decomposition 
(see \cite{Wells}),  that is, 
\begin{eqnarray*}
 f^*(H^{1,0}(X')) \subset H^{1,0}(X) \;  , \; \,  
 f^*(H^{0,1}(X')) \subset H^{0,1}(X)  \;   .
\end{eqnarray*}
Let $J$ be the complex structure of $Pic^0(X)$ and $J'$ of $Pic^0(X')$ respectively. Since  $f^*: H^1(X',\mathbb{C}) \to  H^1(X,\mathbb{C})$ preserves the real structures, 
$\Re f^* = f^*\Re$. We deduce, that $f^*J' = Jf^*$. Thus $f^*$ is holomorphic.
\end{proof}




An abelian variety is a complex torus $\mathbb{C}^k / \Lambda$, which is also a projective manifold. By the criterion of Riemann \cite[p.35]{mumford}, this is the case, if and only if there exists a positive definite hermitian form $h$ on $\bbC^k$ such that 
its imaginary part $\omega:= \Im h$ is integral on $\Lambda \times \Lambda$. 
The form $\omega$ is called a polarisation for $\Lambda$. 

Now let $X$ be a projective manifold. Then there exists a polarisation $\omega$ for  $X$. This means, $\omega$ is a K\"ahler form, such that its cohomology class $[\omega]$ is integral. Then the Hodge-Riemann pairing
$$h(\alpha, \beta)= -2i \int_{X}\omega^{n-1} \wedge \alpha \wedge \bar{\beta} \; ,\;  \alpha, \beta \in H^{0,1}(X)\; ,$$ 
defines a positive definite hermitian form on $H^{0,1}(X)$. Moreover, its imaginary part, $\omega= \Im h$ 
is integral on $H^1(X,\mathbb{Z})$. See \cite[7.2.2]{voisin} for discussion. 
In particular, it follows

\begin{lemma}
\label{moduli_prop_picab}
Let $X$ be a projective manifold. Then $Pic^0(X)$ is an abelian variety.
\end{lemma}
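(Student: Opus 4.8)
The plan is to verify Riemann's criterion directly for the complex torus $Pic^0(X)$, taking the Hodge--Riemann pairing as the required positive definite Hermitian form. First I would pass to the equivalent model $Pic^0(X) \cong H^{0,1}(X)/\Lambda$, where $\Lambda = \Re^{-1}(H^1(X,\mathbb{Z}))$ is the image of the lattice under the real-linear isomorphism $\Re$. This presents $Pic^0(X)$ in the shape $\mathbb{C}^k/\Lambda$ to which the criterion of Riemann in \cite[p.\ 35]{mumford} applies, with $\mathbb{C}^k = H^{0,1}(X)$ and complex structure given by multiplication with $i$.

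Since $X$ is projective, it carries a polarisation $\omega$, that is, a K\"ahler form with integral cohomology class. I would then set $h$ to be the associated Hodge--Riemann pairing $h(\alpha,\beta) = -2i\int_X \omega^{n-1}\wedge \alpha \wedge \bar\beta$ on $H^{0,1}(X)$. The discussion preceding the lemma already records exactly the two properties demanded by Riemann's criterion: $h$ is a positive definite Hermitian form, and its imaginary part $\Im h$ takes integer values on $H^1(X,\mathbb{Z})$. Transporting the latter statement across $\Re$, the form $\Im h$ is integral on $\Lambda \times \Lambda$, so it is a polarisation for $\Lambda$ in the sense of Riemann, and $Pic^0(X)$ is therefore an abelian variety.

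The only point genuinely to pin down is the bookkeeping of the lattice under the biholomorphism: one must confirm that the lattice appearing in Riemann's criterion for the model $H^{0,1}(X)/\Lambda$ is precisely $\Re^{-1}(H^1(X,\mathbb{Z}))$, and that integrality of $\Im h$ ``on $H^1(X,\mathbb{Z})$'' as stated above is indeed integrality on this transported lattice. Because the biholomorphism $Pic^0(X) \to H^{0,1}(X)/\Lambda$ is induced by the real-linear map $\Re$, which by construction intertwines $J$ with multiplication by $i$, the complex structure and the lattice match up automatically and no further computation is needed. I expect no serious obstacle here: the substance of the lemma has been front-loaded into the Hodge-theoretic facts stated just above, so that the proof reduces to assembling those facts into the hypotheses of Riemann's criterion.
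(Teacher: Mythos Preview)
Your proposal is correct and matches the paper's approach exactly: the paper gives no separate proof for this lemma, but simply states ``In particular, it follows'' after recording that the Hodge--Riemann pairing $h$ is positive definite Hermitian on $H^{0,1}(X)$ with imaginary part integral on $H^1(X,\mathbb{Z})$, which is precisely the verification of Riemann's criterion you outline. Your remark that the substance has been front-loaded into the preceding discussion is exactly right.
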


\subsection{Induced invariant Hodge structures on $\Gamma$}
Let $\Gamma$ be a virtually abelian K\"ahler group, and 
let $X$ be a compact K\"ahler manifold with $\pi_1(X)= \Gamma$.   
Let $\tilde{X}$ be the universal covering manifold of $X$. 
Associated to the exact sequence
\eqref{eq:fes}, with characteristic homomorphism $\mu:G \to\GL_{n}(\mathbb{Z})$,
we consider the holomorphic 
covering $$p: \, X_{1}  := \tilde{X}/ \mathbb{Z}^{n}\;  \lra  X \; . $$
Since this is a regular covering, $G= \Gamma / \mathbb{Z}^{n}$ identifies with the group of covering transformations of $p$. In particular, $G$ has an induced 
holomorphic action on the Picard variety $Pic^0(X_{1})$. 

\paragraph{Group cohomology} 
If $\Gamma$ is a group, and $A$ is an abelian group,
we let $H^j(\Gamma,A)$ denote the
$j$-th cohomology group of $\Gamma$ with coefficients in $A$.
In particular, $H^1(\Gamma, A) = {\rm Hom}(\Gamma, A)$.
If $\phi: \Gamma \ra \Gamma'$ is a homomorphism of
groups there is a natural induced homomorphism 
of cohomology groups $\phi^*: H^j(\Gamma',A) \ra H^j(\Gamma,A)$.
(See \cite{Brown}, for a general reference.) We can formulate now:

\begin{lemma}
\label{lemma:cohomologyisnat}
There exists a natural isomorphism 
$$ Pic^0(X_{1})  \cong  H^1(\mathbb{Z}^n, \mathbb{R})/ H^1(\mathbb{Z}^n, \mathbb{Z})$$
such that,  for all $g \in G$,  the diagram
\begin{equation}
\label{vak_diagram_2}
\xymatrix@1{
Pic^0(X_{1}) \ar@1{->}[r]^(0.35){\cong} \ar[d]^{g^*} & H^1(\mathbb{Z}^n, \mathbb{R})/ H^1(\mathbb{Z}^n, \mathbb{Z})   \ar[d]^{\mu(g)^*} \\ 
Pic^0(X_{1}) \ar@1{->}[r]^(0.35){\cong}           &  H^1(\mathbb{Z}^n, \mathbb{R})/ H^1(\mathbb{Z}^n, \mathbb{Z})   } 
\end{equation}commutes.
\end{lemma}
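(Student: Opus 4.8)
The plan is to obtain the isomorphism from the canonical identification of first cohomology with homomorphisms out of the fundamental group, and then to read off the induced $G$-action on each side separately.

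\emph{The isomorphism.} Since $\tilde X$ is simply connected and $X_{1} = \tilde X/\mathbb{Z}^{n}$, the covering map $p$ identifies $\pi_1(X_{1})$ with the normal subgroup $\mathbb{Z}^{n} \trianglelefteq \Gamma$ via $p_*$. First I would invoke the Hurewicz theorem, $H_1(X_{1},\mathbb{Z}) \cong \pi_1(X_{1})^{\mathrm{ab}} = \mathbb{Z}^{n}$, together with the universal coefficient theorem, which for any subgroup $A \subseteq \mathbb{C}$ yields a natural isomorphism
\[
H^1(X_{1},A) \;\cong\; \operatorname{Hom}(\mathbb{Z}^{n}, A) \;=\; H^1(\mathbb{Z}^{n},A),
\]
compatible with the coefficient inclusions $\mathbb{Z} \subseteq \mathbb{R} \subseteq \mathbb{C}$. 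In particular the lattice $H^1(X_{1},\mathbb{Z})$ is carried isomorphically onto $H^1(\mathbb{Z}^{n},\mathbb{Z})$, so passing to quotients gives an isomorphism of real tori
\[
\Phi:\; Pic^0(X_{1}) = H^1(X_{1},\mathbb{R})/H^1(X_{1},\mathbb{Z}) \;\longrightarrow\; H^1(\mathbb{Z}^{n},\mathbb{R})/H^1(\mathbb{Z}^{n},\mathbb{Z}),
\]
through which the complex structure $J$ of $Pic^0(X_{1})$ is transported to the right-hand side.

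\emph{The $G$-action.} For $g \in G$ the associated covering transformation of $p$ is a biholomorphism $g: X_{1} \to X_{1}$, so by Lemma \ref{vak_prop_1} it induces a morphism $g^*$ of $Pic^0(X_{1})$. The step I expect to be the main obstacle is to verify that the automorphism $g_*$ induced by $g$ on $\pi_1(X_{1}) = \mathbb{Z}^{n}$ is exactly $\mu(g)$. I would argue this from the standard description of the deck-transformation action of $G = \Gamma/\mathbb{Z}^{n}$: after choosing a lift of the basepoint and a representative $\gamma \in \Gamma$ of $g$, the automorphism $g_*$ of $\pi_1(X_{1}) = \mathbb{Z}^{n} \trianglelefteq \Gamma$ agrees with conjugation $v \mapsto \gamma v \gamma^{-1}$ inside $\Gamma$. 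Since $\mathbb{Z}^{n}$ is abelian this is independent of all choices and coincides, by the definition given for \eqref{eq:fes}, with the characteristic representation $\mu(g)$.

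\emph{Commutativity.} Finally I would use that the identification $H^1(-,\mathbb{R}) \cong \operatorname{Hom}(\pi_1(-),\mathbb{R})$ is natural, so that under $\Phi$ the contravariant pullback $g^*$ on $H^1(X_{1},\mathbb{R})$ corresponds to the precomposition map $\phi \mapsto \phi \circ g_* = \phi \circ \mu(g)$ on $\operatorname{Hom}(\mathbb{Z}^{n},\mathbb{R}) = H^1(\mathbb{Z}^{n},\mathbb{R})$, which is precisely $\mu(g)^*$. As all maps respect the integral lattices, this descends to the quotient tori and yields the commutative diagram \eqref{vak_diagram_2}, completing the proof. Beyond the basepoint chase identifying $g_*$ with $\mu(g)$, every step is formal naturality together with the universal coefficient theorem.
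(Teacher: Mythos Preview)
Your proof is correct and follows essentially the same route as the paper: identify $H^1(X_1,A)$ with $H^1(\mathbb{Z}^n,A)$ via $\pi_1(X_1)=\mathbb{Z}^n$, and check that the deck transformation $g$ acts on $\pi_1(X_1)$ by conjugation in $\Gamma$, i.e.\ by $\mu(g)$. The only cosmetic difference is packaging: the paper invokes the general Maclane result on $H^j_{sing}(\tilde X/\Gamma,A)\cong H^j(\Gamma,A)$ for $1$-acyclic $\tilde X$ (the ``topological digression''), whereas you spell out the degree-one case directly with Hurewicz and the universal coefficient theorem.
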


Before proving the Lemma, we embark on the following 

\paragraph{Topological digression}
For the following, see \cite[Prop 11.4, Thm. 11.5]{maclane}.
Let $X$ be a (reasonable)  topological space, and $\Gamma$ a group
which acts properly discontinuously on $X$.
If $X$ is acyclic up to dimension $k$,  that is,  $$H_{j}(X) =  \begin{cases} \bbZ \;,  j = 0 \\ 0 \:\; , k \geq j \geq 1 \end{cases}, $$ then, for all $k \geq j \geq 1$,
there exist natural isomorphisms
\begin{equation*}
H^j_{sing}(X/\Gamma,A) \cong     H^j(\Gamma,A)
\end{equation*}
such that the diagram
\begin{equation}
 \label{topo_diagram_1}
\xymatrix@1{
H^j_{sing}(X/\Gamma,A) \ar@1{->}[r]^(0.5){\cong}\ar[d]^{f^*} & \ H^j(\Gamma,A)\ar[d]^{\phi^*}\\ 
H^j_{sing}(X/\Gamma,A) \ar@1{->}[r]^(0.5){\cong}           & \ H^j(\Gamma,A)      } 
\end{equation}
commutes, for all mappings $f: X \rightarrow X $,
and homomorphisms  $\phi : \Gamma \rightarrow \Gamma$ with 
\begin{equation*}
f( \gamma x) = (\phi \gamma)fx \; , \; \forall \gamma \in \Gamma, \; x \in X \; . 
\end{equation*} \

Now we give the proof of the above Lemma \ref{lemma:cohomologyisnat}.

\begin{proof}
Let $g \in G$ be a covering transformation of $p: X_{1} \ra X$, which is 
represented by $\gamma \in \Gamma$, acting on $\tilde{X}$, such that
the diagram
\begin{eqnarray}
\label{vak_diagram_1}
\xymatrix@1{
\tilde{X} \ar[r]^{\gamma} \ar[d]          & \ \tilde{X} \ar[d] \\
    X_{1} \ar[r]^{g}                &  X_{1} 
}   
\end{eqnarray}
is commutative. 
Then, for all $a \in \bbZ^n$, $x \in \tilde{X}$, the relation
\begin{eqnarray*}
\gamma a x =  \gamma a \gamma^{-1} \gamma x 
=  (\gamma a \gamma^{-1}) \gamma x 
=  \mu(g)(a) \gamma x 
\end{eqnarray*}
holds. Since $\tilde{X}$ is simply connected, by the theorem of Hurewicz, 
$H_1(\tilde{M})= 0$,  \cite{massey}.) Applying
 (\ref{topo_diagram_1}), it follows that there exists a natural 
 isomorphism $$H^1_{sing}(X_{1},A) \ra  H^1(\bbZ^n,A) \; ,$$
  such that the diagram 
 \begin{equation}
\xymatrix@1{
 H^1_{sing}(X_{1},A) \ar@1{->}[r]^{\cong}\ar[d]^{g^*} & \ H^1(\bbZ^n,A)\ar[d]^{\mu(g)^*}\\ 
H^1_{sing}(X_{1},A) \ar@1{->}[r]^{\cong}           & \ H^1(\bbZ^n,A)      } 
\end{equation} 
 is commutative.
 Since the change of coefficients is natural, the stated commutative diagram
 \eqref{vak_diagram_2} is induced by composing with the projection 
 $H^{0,1}(X) \ra  H^1(X,\bbR)$, and the de Rham isomorphism 
$ H^1(X,\bbR) \ra   H^1_{sing}(X,\bbR) $. 
\end{proof}

\begin{proposition}
\label{vak_prop_2}
 Let $\Gamma$ be a virtually abelian K\"ahler group. Then $\Gamma$ admits a complex structure $J$. If\/ $\Gamma$ is projective then it also admits a polarisable complex structure $J'$.
\end{proposition}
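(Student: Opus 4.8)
The plan is to produce the two invariant structures on $\bbR^n \cong H^1(\bbZ^n, \bbR)$ by transporting structures that live on the Picard variety $\mathrm{Pic}^0(X_1)$, where $X_1 = \tilde X / \bbZ^n$ is the holomorphic covering introduced above. The crucial point is that Lemma \ref{lemma:cohomologyisnat} identifies $\mathrm{Pic}^0(X_1)$, as a \emph{complex} torus carrying a $G$-action, with $H^1(\bbZ^n,\bbR)/H^1(\bbZ^n,\bbZ)$ equipped with a complex structure $J$ and the linearised $G$-action $\mu(g)^*$. So I first observe that $X_1$ is again a compact K\"ahler manifold (as a finite covering of $X$ it inherits a K\"ahler metric), hence $\mathrm{Pic}^0(X_1)$ is a genuine complex torus and $J$ is an honest complex structure on the real vector space $H^1(\bbZ^n,\bbR)$.

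Next I would exploit the commutativity of diagram \eqref{vak_diagram_2}. Since the isomorphism $\mathrm{Pic}^0(X_1) \cong H^1(\bbZ^n,\bbR)/H^1(\bbZ^n,\bbZ)$ intertwines the holomorphic automorphism $g^*$ with the linear map induced by $\mu(g)^*$, and since $g^*$ is holomorphic (that is, commutes with the complex structure of the torus), it follows that $\mu(g)^*$ commutes with $J$ for every $g \in G$. Up to the standard identification of $H^1(\bbZ^n,\cdot)$ with the dual lattice/space and passing from $\mu^*$ to $\mu$ (which only transposes the representation and does not affect the existence of a commuting complex structure), this produces a complex structure on $\bbR^n$ invariant under the characteristic representation $\mu$. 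This is exactly the statement that $\Gamma$ admits a complex structure, proving the first assertion.

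For the second assertion I would run the same argument but start from a projective manifold $X$. Here the extra input is Lemma \ref{moduli_prop_picab}: if $X$ is projective then $X_1$, being a finite covering of a projective manifold, is itself projective, so $\mathrm{Pic}^0(X_1)$ is not merely a complex torus but an \emph{abelian variety}. An abelian variety carries a polarisation, i.e. a positive definite Hermitian form whose imaginary part $\omega = \Im h$ is an alternating two-form integral on the lattice $H^1(\bbZ^n,\bbZ)$ and compatible with $J$ in the sense of conditions (1)--(3) of the polarisation definition. Transporting this $\omega$ across the natural isomorphism of diagram \eqref{vak_diagram_2} gives a polarisation for $J$ on $\bbR^n$. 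The only thing that must be checked is $G$-invariance: since the isomorphism is $G$-equivariant and the polarisation on $\mathrm{Pic}^0(X_1)$ may be chosen $G$-invariant, this carries over; concretely, one can average a polarisation over the finite group $G$, which preserves positive-definiteness, integrality, and $J$-compatibility, to ensure $\mu$-invariance. This yields the polarisable $\mu$-invariant complex structure $J'$.

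The main obstacle I anticipate is the bookkeeping of which side of the duality the representation $\mu$ acts on, and ensuring that the polarisation can be taken $G$-invariant rather than merely existing on the torus. The first is a routine transpose/dualisation (a complex structure commutes with $\mu$ iff a suitable one commutes with $\mu^*$, and conjugating by an integral isomorphism preserves all the relevant conditions). The second is handled by the averaging trick over the finite group $G$: averaging the Hermitian form $h$ to $\frac{1}{|G|}\sum_{g\in G} (\mu(g)^*)^\ast h$ keeps it positive definite and $J'$-compatible while making it $G$-invariant, and averaging preserves rationality so that integrality on the lattice can be restored after clearing denominators. Thus the genuinely delicate step is less a deep difficulty than a careful verification that the $G$-equivariant identification of Lemma \ref{lemma:cohomologyisnat} lets one pull back \emph{all} the data of a polarised abelian variety simultaneously.
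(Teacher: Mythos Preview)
Your proposal is correct and follows essentially the same route as the paper: transport the complex structure (and, in the projective case, the polarisation) from $\mathrm{Pic}^0(X_1)$ to $H^1(\bbZ^n,\bbR)$ via Lemma~\ref{lemma:cohomologyisnat}, then dualise to land on the characteristic representation $\mu$. The only superfluous step is your averaging to make the polarisation $G$-invariant---the paper's definition of ``polarisable'' does not require $\omega$ to be $G$-invariant, so this is unnecessary (though harmless).
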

\begin{proof}
Let $X$ be K\"ahler with $\pi_{1}(X) = \Gamma$. 
Note that  $H^1(\bbZ^n, \bbR)$ is isomorphic
to $\Hom(\bbZ^n, \bbR)$, which is just the dual of the module 
$\bbZ^n \otimes \bbR$.
Therefore, according to Lemma \ref{lemma:cohomologyisnat}, the
isomorphism
$H^1(X,\bbR)^*  \cong  (\bbZ^n \otimes \bbR)$, is a $G$-module
isomorphism, which is defined over $\bbQ$. 
Since $G$ acts by holomorphic transformations on $Pic^0(X_{1})$, the action
of $G$ on $H^1(X,\bbR)$ admits an invariant complex structure. 
In particular, by duality, $\bbZ^n \otimes \bbR$ attains a $G$-invariant
complex structure $J$. It thus defines a
complex structure for $\Gamma$. 
Since all identifications are defined over $\bbQ$, the complex 
structure $J$ is also polarisable if $X$ is projective. \end{proof}

\section{Construction of certain K\"ahler and projective manifolds}
\label{construction}
Let $\Gamma$ be a  virtually abelian group, which 
admits an invariant complex structure $J$. We construct
compact K\"ahler manifolds with fundamental group $\Gamma$.

\begin{lemma}
\label{lemma:vakg_emb}
There exists a homomorphism $\rho: \Gamma \ra \mathbb{C}^k \rtimes U(k)$,
which embeds the subgroup $\bbZ^n$ as a lattice 
$\Lambda = \rho(\bbZ^n)$ into $\bbC^k$. 
If,  in addition, $J$ is polarisable 
then there exists a polarisation for the lattice $\Lambda$.
\end{lemma}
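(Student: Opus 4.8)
The plan is to realise $\Gamma$ as a group of complex affine isometries of $\bbC^{k}$. Since $J^{2}=-\id$ on $\bbR^{n}$ we have $n=2k$, and $(\bbR^{n},J)$ becomes a complex vector space isomorphic to $\bbC^{k}$ on which $J$ acts as multiplication by $i$; the lattice $\bbZ^{n}$ then sits inside $\bbC^{k}$ as a full lattice. Because $J$ is $\mu$-invariant, each $\mu(g)$ is $\bbC$-linear. Averaging an arbitrary Hermitian inner product over the finite group $G$ produces a $G$-invariant positive definite Hermitian form $h_{0}$, and after choosing a $\bbC$-linear identification carrying $h_{0}$ to the standard form we obtain $\mu(g)\in U(k)$ for all $g$. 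Thus $G$ acts on $\bbC^{k}$ through unitary transformations preserving the lattice.

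Next I would build the homomorphism $\rho$. Choose a set-theoretic section $s\colon G\to\Gamma$ with $s(1)=1$, so that every element of $\Gamma$ is uniquely $a\, s(g)$ with $a\in\bbZ^{n}$, $g\in G$, and let $f\colon G\times G\to\bbZ^{n}$ be the associated $2$-cocycle defined by $s(g)s(h)=f(g,h)\,s(gh)$. I want to set
\[
\rho(a\, s(g)) = (a+t(g),\,\mu(g)) \in \bbC^{k}\rtimes U(k),
\]
for a suitable translation part $t\colon G\to\bbR^{n}=\bbC^{k}$. A direct check shows $\rho$ is a homomorphism precisely when $f(g,h)=t(g)+\mu(g)t(h)-t(gh)$, i.e.\ when $f$ is the coboundary of $t$ in $C^{1}(G,\bbR^{n})$. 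Here is the crux: the class $[f]\in H^{2}(G,\bbZ^{n})$ classifying the extension \eqref{eq:fes} maps to $0$ in $H^{2}(G,\bbR^{n})$, because $\bbR^{n}$ is a $\bbQ[G]$-module tensored with $\bbR$ and the cohomology of a finite group with coefficients in a $\bbQ$-vector space vanishes in positive degree. Hence a $1$-cochain $t$ with $\delta t=f$ exists, giving the desired $\rho$. By construction $\rho$ restricts on $\bbZ^{n}$ to $a\mapsto(a,\id)$, the translation by the lattice vector $a$, so $\rho|_{\bbZ^{n}}$ is injective with image the lattice $\Lambda=\rho(\bbZ^{n})\subset\bbC^{k}$ (note $\rho$ itself need not be injective, which is all the statement requires).

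For the polarisation assertion, suppose $J$ is polarisable and let $\omega\in\bigwedge^{2}(\bbR^{n})^{*}$ be a polarisation satisfying conditions (1)--(3). Transporting $\omega$ through the $\bbC$-linear identification $(\bbR^{n},J)\cong\bbC^{k}$, I would form the Hermitian form
\[
h(u,v)=\omega(Ju,v)+i\,\omega(u,v).
\]
Condition (1) gives that the real part $\omega(J\cdot,\cdot)$ is symmetric and that $h$ is $\bbC$-linear in the first slot with $\overline{h(u,v)}=h(v,u)$; condition (2) gives that $h$ is positive definite. Thus $h$ is a positive definite Hermitian form on $\bbC^{k}$ with $\Im h=\omega$, and condition (3) says that $\Im h$ is integral on $\Lambda\times\Lambda$. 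By Riemann's criterion (recalled before Lemma \ref{moduli_prop_picab}), $h$ is then a polarisation for $\Lambda$, completing the proof.

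The main obstacle is the middle step: although $\Gamma$ may be a non-split extension, one must see that it nevertheless admits an affine action fixing the lattice, and this rests entirely on the vanishing of $H^{2}(G,\bbR^{n})$, which trivialises the extension cocycle over the reals and produces the translation vector $t$. The remaining ingredients---averaging to land in $U(k)$ and converting $\omega$ into a Hermitian polarisation---are routine linear algebra.
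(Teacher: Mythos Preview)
Your proof is correct and follows essentially the same route as the paper. The paper phrases the key step in terms of the pushout $\bar\Gamma$ of \eqref{eq:fes} along $\bbZ^{n}\hookrightarrow\bbR^{n}$ and then invokes $H^{2}_{\mu}(G,\bbC^{k})=0$ to split $\bar\Gamma$, whereas you work directly with a section and an explicit $2$-cocycle trivialised over $\bbR^{n}$; these are the same argument in different dress, and the remaining steps (averaging into $U(k)$, transporting $\omega$ to a polarisation of $\Lambda$) match the paper's treatment.
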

\begin{proof}
We form 
the pushout $\bar{\Gamma}$ of \eqref{eq:fes}, with respect to the inclusion 
$\iota: \bbZ^n \hookrightarrow \bbR^n$.
This gives a homomorphism of exact sequences: 
\begin{equation} \label{eq:extequiv}
\begin{CD}
1 @>>>  \bbZ^n @>>>  \Gamma @>>> G    @>>>  1\\
 @.  @VV{\iota}V  @VV{}V  @VV{\id_{G}}V \\
1 @>>>  \bbR^n  @>>> \bar \Gamma @>>>  G @>>>  1
\end{CD} \; \, \; \; \; ,  
\end{equation} \ \\
(Explicitly, the construction of $\bar \Gamma$ is as follows:
Since  $\Gamma$ is an extension of $\bbZ^n$ by $G$, it is isomorphic to a 
group of the form
\begin{eqnarray*}
(\mathbb{Z}^{2k} \times G, \star); & (z_1,g_1) \star (z_2, g_2) = (z_1 + \mu (g_1)z_2 + \eta(g_1, g_2), g_1 g_2) \; ,
\end{eqnarray*}
with $\eta \in Z^2_{\mu}(G,\mathbb{Z}^{2k})$ a two-cocycle with respect to
$\mu$. Then $\bar \Gamma$ is  constructed as 
\begin{eqnarray*}
\bar \Gamma := (\mathbb{R}^{2k} \times G, \star) ;  & (u_1,g_1) \star (u_2, g_2) = (u_1 + \mu(g_1) u_2 + \iota \, \eta(g_1, g_2),g_1 g_2 )\; .  \; \;)
\end{eqnarray*} 
Now  $H^{2}_{\mu}(G, \mathbb{C}^{k}) = 0$, since $G$ is finite (see \cite[Chapter III,\S 10]{Brown}). Therefore, the
second exact sequence splits, and there exists a homomorphism 
\begin{equation} \label{eq:splitting}
\begin{CD}
1 @>>>  \bbR^n  @>>> \bar \Gamma @>>>  G @>>>  1 \\
 @.  @VV{\id_{\bbR^n}}V  @VV{}V  @VV{\id_{G}}V \\
 1 @>>>  \bbR^n @>>>  \bbR^n \rtimes_{\mu} G @>>> G    @>>>  1
\end{CD} \; \, \; \; \; .  
\end{equation} \ \\
The semi-direct product $\bbR^n \rtimes_{\mu} G$ maps naturally
into $\bbR^n \rtimes \GL(n,\bbR)$, which, via the above, constructs 
a homomorphism $f: \Gamma \ra \bbR^n \rtimes \GL(n,\bbR)$. 
Since $\mu(G)$ has an invariant complex structure, there exists $A \in   \GL(n,\bbR)$ such that $\mu^A(G) \leq \GL(k,\bbC)$. Since $\mu^A(G)$ is finite, it admits an
invariant hermitian metric,  and we may as well assume that $\mu^A(G) \leq \U(k)$.
Thus $\rho(g) =  A f(g) A^{-1}$ defines the required homomorphism 
$$ \rho: \Gamma \ra \bbC^k \rtimes \U(k) \;  $$  and 
$$ \Lambda = \rho(\bbZ^n) = A(\bbZ^n)$$
 is a  lattice in $\bbC^k$. 
Assume  furthermore that $\omega$ is a polarisation for $J$. Then 
$$ A^* \omega \, (u,v) = \omega(A^{-1}u, A^{-1}v) $$ defines a polarisation 
for the lattice $\Lambda$ in $\bbC^k$.
\end{proof}

Let $Y$ be a compact K\"ahler manifold with $\pi_1(Y)=G$. 
Such a manifold exists by \cite[Proposition 15]{serre}, in
fact, it may be chosen to be projective. Let $\tilde{Y}$ be the 
universal covering of $Y$, and put
\begin{eqnarray*}
\tilde{X}:=  \;  \mathbb{C}^k \times \tilde{Y} \; . 
\end{eqnarray*}
Let $h: \Gamma \ra G$ denote the quotient homomorphism. 
Since $G$ acts on $\tilde{Y}$ by covering transformations, 
we can form the diagonal action of $\Gamma$ on $\tilde X$: 
\begin{eqnarray*}
\gamma  \cdot  (u,y) &=& ( \rho(\gamma) u, h(\gamma) y)\; . 
\end{eqnarray*}
Note that $\rho(\Gamma)$ is a discrete subgroup of $\mathbb{C}^k \rtimes U(k)$ and acts with compact quotient on $\mathbb{C}^k$. In particular, 
the action of $\Gamma$ on $\tilde{X}$ is properly discontinuous. 
Moreover, $\ker \rho$ is a finite subgroup of $\Gamma$, which 
projects isomorphically onto  $\ker \mu$. Therefore the action of  $\Gamma$ 
on $\tilde{X}$ is also free, since $G$ acts freely on $\tilde Y$. 
Since  $\Gamma$ acts by holomorphic transformations on $\tilde{X}$, the space of orbits 
$$  X: = \;  (\mathbb{C}^k   \times \tilde{Y}) / \, \Gamma $$ is a compact complex
manifold. Moreover, $X$ is also K\"ahler, since $\Gamma$ acts by isometries of
the product metric on $\tilde{X}$.
Put $T= \bbC^k/ \Lambda$.
Then there is a sequence of holomorphic coverings 
\begin{eqnarray}
\label{vak_covering}
 \xymatrix@1{
\tilde{X}= \mathbb{C}^k  \times \tilde{Y} \ar[d]
\\
T  \times \tilde{Y} \ar[d] \\ 
X =  (T  \times \tilde{Y}) / G
} \; , 
\end{eqnarray}
and also a locally trivial holomorphic fibering
$$ T \lra X \lra Y   \;  \; \,  .  $$

We have

\begin{proposition} \label{prop:constructX}
The complex manifold $X$ is compact K\"ahler,  and $\pi_{1}(X) = \Gamma$. 
If furthermore $J$ admits a polarisation then $X$ is a projective manifold. 
\end{proposition}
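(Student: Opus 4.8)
The plan is to dispatch the three assertions of the Proposition — that $X$ is compact K\"ahler, that $\pi_1(X)\cong\Gamma$, and that $X$ is projective when $J$ is polarisable — in increasing order of difficulty, devoting almost all the work to the last one. The fundamental group and compactness are bookkeeping on the tower \eqref{vak_covering}: the space $\tilde X=\bbC^k\times\tilde Y$ is simply connected, being a product of the simply connected $\bbC^k$ with the simply connected universal cover $\tilde Y$, and $\Gamma$ acts on it freely, properly discontinuously and holomorphically (as already established in the text preceding the Proposition). Hence $\tilde X\to X$ is the universal covering and $\pi_1(X)\cong\Gamma$. Moreover $\tilde Y\to Y$ is a $|G|$-sheeted cover of the compact manifold $Y$, so $\tilde Y$ is compact; therefore $Z:=T\times\tilde Y$ and its quotient $X=Z/G$ are compact (equivalently, compactness follows from the fibration $T\to X\to Y$). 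The K\"ahler property is the descent of the product metric on $\tilde X$, already observed in the text.

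For projectivity I would read off the middle and bottom rows of \eqref{vak_covering}: $X$ is the quotient $Z/G$ of $Z=T\times\tilde Y$ by the diagonal holomorphic $G$-action, and this action is \emph{free} because $G$ already acts freely on $\tilde Y$. I first verify that $Z$ is projective. When $J$ is polarisable, Lemma \ref{lemma:vakg_emb} furnishes a polarisation for the lattice $\Lambda$, so by Riemann's criterion (as in Lemma \ref{moduli_prop_picab}) the torus $T=\bbC^k/\Lambda$ is an abelian variety. The factor $\tilde Y$ is a finite \'etale cover of the projective manifold $Y$, hence itself projective, since the pullback of an ample line bundle under a finite surjective map is ample. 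A product of projective manifolds is projective, so $Z$ is projective, and it remains to show that the free finite quotient $X=Z/G$ of a projective manifold is again projective.

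The decisive step is the descent of an ample line bundle. Starting from any ample line bundle $L$ on $Z$, I would form $M=\bigotimes_{g\in G} g^{*}L$. Each $g^{*}L$ is ample, as $g$ is an automorphism, so $M$ is ample; and $M$ carries a natural $G$-linearisation given by permuting the tensor factors, since $h^{*}M=\bigotimes_{g}(g\circ h)^{*}L\cong M$ for each $h\in G$. Because $G$ acts freely, this linearisation is unobstructed and $M$ descends to a line bundle $\bar M$ on $X$ with $q^{*}\bar M=M$, where $q:Z\to X$ is the quotient map. Finally $\bar M$ is ample: its pullback $q^{*}\bar M=M$ under the finite surjective map $q$ is ample, which forces $\bar M$ to be ample. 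Hence $X$ is projective.

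The main obstacle is precisely this descent-and-ampleness step: one must confirm that the $G$-linearised ample bundle genuinely descends — here the freeness of the $G$-action is indispensable, as it guarantees trivial action on the fibres and so removes any descent obstruction — and that ampleness survives passage to the finite quotient. If one prefers to avoid line bundles and stay closer to the polarisation language of the paper, an alternative is to build a Hodge metric directly: combine the translation-invariant polarisation on $T$, which descends to a rational K\"ahler class on the fibre part, with a rational K\"ahler class pulled back from the projective base $Y$; rationality of the resulting class on $X$ is then deduced from its rationality on $Z$ via the transfer relation $q_{*}q^{*}=|G|\cdot\mathrm{id}$ on rational cohomology, after which the Kodaira embedding theorem yields projectivity.
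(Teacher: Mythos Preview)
Your proof is correct and follows essentially the same route as the paper: show $T$ is an abelian variety via Lemma~\ref{lemma:vakg_emb}, show $\tilde Y$ is projective, take the product, and then argue that a free finite quotient of a projective manifold is projective. The only cosmetic difference is in this last step: the paper isolates it as a separate ``quotient lemma'' (Lemma~\ref{lemma:fquotients}) proved by averaging the K\"ahler form to a $G$-invariant rational class and invoking Kodaira's embedding theorem --- precisely your alternative (b) --- whereas your primary argument (a) via descent of a $G$-linearised ample line bundle is the line-bundle translation of the same idea.
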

\begin{proof}
We already established that $X$ is compact K\"ahler. If $J$ is polarised, then by
Lemma \ref{lemma:vakg_emb}, $T$ is an abelian variety. By  
\cite[Proposition 15]{serre}, we may choose $\tilde{Y}$ as a projective
manifold. Therefore, $T \times \tilde Y$ is projective. By the quotient lemma
(cf.\ Lemma \ref{lemma:fquotients} below),
$X$ is projective. 
\end{proof}

Remark, if $\Gamma$ is torsionfree then the homomorphism $$\Gamma \lra \bbC^k \rtimes U(k)$$
constructed above is injective, and moreover, $\Gamma$ acts freely on
$\bbC^k$.  Therefore, 
\begin{eqnarray}
\label{flat_covering}
 \xymatrix@1{
T   \ar[d] \\ 
M = \bbC^k/ \Gamma   
} 
\end{eqnarray} 
is a holomorphic covering of flat K\"ahler manifolds,  where
$\pi_{1}(M) = \Gamma$. If $J$ has a polarisation,
$T$ is an abelian variety, and thus by Lemma \ref{lemma:fquotients} the complex manifold 
$M = T/G$ is projective. This proves Corollary \ref{cor:flatKaehler}. 

\paragraph{The quotient lemma for K\"ahler manifolds and projective varieties}
In the following form the quotient Lemma is an immediate
application of Kodaira's projective embedding theorem. 
(For the algebraic geometry variant,  see section \ref{sect:fieldsofdef1}.)
\begin{lemma}
\label{lemma:fquotients}
Let $M,\bar{M}$ be compact complex manifolds and $p:\bar{M} \to M$ be a finite, holomorphic covering. Then $\bar{M}$ is projective, if and only if  $M$ is projective.
\end{lemma}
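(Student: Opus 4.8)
The statement is an equivalence, so I would prove the two implications separately, and I expect the interesting direction to be "$M$ projective $\Rightarrow \bar M$ projective'' (the forward direction, needed in the application). The plan is to reduce both directions to Kodaira's embedding theorem, which characterises projectivity of a compact complex manifold by the existence of a positive (i.e.\ K\"ahler of Hodge type) line bundle, equivalently a closed positive $(1,1)$-form whose cohomology class is integral.

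First I would treat the direction "$\bar M$ projective $\Rightarrow M$ projective''. Here I would take an ample line bundle $L$ on $\bar M$, with positive curvature form $\omega_L$, and attempt to push it down along the finite covering $p\colon \bar M \to M$. Since $p$ is a finite holomorphic covering, its deck transformation group $H$ (or, if the cover is not Galois, the process of symmetrising over sheets) acts, and I would form the $H$-invariant tensor power $\bigotimes_{h \in H} h^* L$, whose curvature is the $H$-average of the pullback curvatures and hence remains positive; being invariant, it descends to a line bundle on $M$ with positive, integral curvature, so $M$ is projective by Kodaira. The key technical point is that averaging a positive form over a finite group keeps it positive, and that an $H$-invariant positive line bundle genuinely descends to the quotient.

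The reverse direction "$M$ projective $\Rightarrow \bar M$ projective'' is more direct: given an ample line bundle on $M$ with positive integral curvature form $\omega$, the pullback $p^*\omega$ is a closed real $(1,1)$-form on $\bar M$ which is still positive definite, since $p$ is a local biholomorphism, and its class remains integral because $p^*$ carries integral cohomology to integral cohomology. Applying Kodaira's theorem on the compact complex manifold $\bar M$ then yields that $\bar M$ is projective. This is exactly the direction invoked in Proposition \ref{prop:constructX}, where $T \times \tilde Y$ is projective and $X$ is its finite quotient; note that direction is the descent direction, so in the application it is really the first paragraph above that is used.

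The main obstacle I anticipate is bookkeeping rather than conceptual: one must be careful that the covering need not be Galois, so the clean "invariant line bundle descends'' argument has to be replaced by a norm/transfer construction of the descended bundle, and one must verify that positivity and integrality of the Chern class survive these operations. A cleaner alternative that sidesteps line bundles entirely is to argue purely with forms: projectivity of a compact K\"ahler manifold is equivalent, by the Kodaira embedding theorem, to the existence of a K\"ahler metric whose class lies in the image of $H^2(\,\cdot\,,\bbZ)$ in $H^2(\,\cdot\,,\bbR)$; both pullback (for the reverse direction) and finite averaging of such integral K\"ahler classes (for the forward direction) preserve this property, with integrality of the averaged class following because $p^*p_*$ acts as multiplication by the degree on rational cohomology. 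This is the route I would actually write up, since it keeps the argument self-contained modulo Kodaira.
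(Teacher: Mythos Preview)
Your proposal is correct and follows essentially the same route as the paper: both directions are reduced to Kodaira's embedding theorem, with the lift direction handled by pulling back a rational K\"ahler class along $p$, and the descent direction handled by averaging a rational K\"ahler form over the deck group and then pushing it down to $M$. The paper carries this out purely at the level of forms (your ``cleaner alternative''), using the isomorphism $p^*: H^2(M,\bbR) \to H^2(\bar M,\bbR)^G$ and its compatibility with the rational structures to see that the descended class is rational; your $p^*p_*$ argument achieves the same thing.

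One point worth flagging: the paper's written proof tacitly treats the covering as Galois (it takes $G = \mathrm{Deck}(\bar M,p)$ and descends a $G$-invariant form to $M$), which is all that is needed in the applications, whereas you explicitly note the non-Galois case and propose the norm/transfer construction. That extra care is justified given the lemma's stated generality; your summing-over-sheets description of $p_*$ is the right fix. Also, your opening sentence misidentifies which implication the application uses, but you catch and correct this yourself two paragraphs later: it is indeed the descent direction ($\bar M$ projective $\Rightarrow$ $M$ projective) that Proposition~\ref{prop:constructX} invokes.
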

\begin{proof}
Assume that $\bar{M}$ is projective. Then 
there is a K\"ahler metric such that  $[\omega] \in H^2(\bar{M}, \mathbb{Q})$,
where $\omega$ is the K\"ahler form. Let $G:= Deck(\bar{M}, p)$ denote the decktransformation group of the
covering, which is a finite group. Now put 
$\bar{\theta}:= \sum_{g \in G} g^* \omega$, where $g^* \omega$
denotes the pull back of $\omega$ by the action of $g \in G$ on $\bar{M}$.
Then $\bar \theta$  is a $G$-invariant
two-form on $\bar{M}$, and it is also a K\"ahler form, 
since the covering transformations are holomorphic maps.  Its cohomology 
class $[\bar{\theta}] \in  H^2(\bar{M}, \mathbb{Q})$ is invariant by $G$.
Let $\theta$ be the unique two-form on $M$ which satisfies 
$p^*\theta = \bar{\theta}$, and put $ H^2(\bar{M}, \mathbb{\bbR})^G$ for 
the $G$-invariant cohomology classes. Recall (cf.\  \cite[Prop.\ 11.14]{maclane}) that the map 
$p^*: H^2(\bar{M}, \mathbb{\bbR})^G \cong H^2(M,\mathbb{\bbR})$ 
is an isomorphism. Since this map is also compatible with the rational structures, it follows that $[\theta] \in H^2(M, \mathbb{Q})$.
By Kodaira's theorem (cf.\ \cite[Chapter VI]{Wells}),
$M$ is a projective manifold.

For the converse, assume that $M$ is a projective manifold and let $[\theta] \in H^2(M,\mathbb{Q})$ be the K\"ahler class of a K\"ahler metric on $M$. Then $\omega= p^*\theta$ is a K\"ahler form on $\bar M$ with rational K\"ahler class. It follows that $\bar M$ is projective.
\end{proof}


\section{Deformation spaces of torus quotients and existence of polarisations} 
The local deformation space for a complex torus $$X= T^{k}$$
is represented by the space of all complex structures
 $$ {\mathfrak D} = \{ J: \bbR^{2k} \to \bbR^{2k} \mid J^2 = -\id \} = 
\GL(2k,\bbR) \big/ \GL(k,\bbC) \;  . $$
(Compare \cite[p.408ff]{KodairaSpencerI-II} or 
\cite[\S 2.1]{ShimizuUeno} for details.) 
Now let $$ X_{G}  = X/G$$ be a finite quotient, and
 $\mu: G \to  \GL(k,\bbC)$ be the holonomy homomorphism
 associated to the action of $G$ on $X$
(see section \ref{construction}). Then 
$$ {\mathfrak D}(G)  = \{ J \in {\mathfrak D} \mid  A J A^{-1} =J \text{, for all $A$ in $\mu(G)$} \} $$ describes the local deformation space for  $X_{G}$. \\

The main result of this section is: 

\begin{proposition} \label{prop:contdeformation}
Let $J \in  {\mathfrak D}(G)$. Then there exists a 
continuous deformation $\bar J = (J_{t}): I= [0,1] \to {\mathfrak D}(G)$, 
with $J_{0}= J$,  such that  $J_{1}$ is polarisable.  
\end{proposition}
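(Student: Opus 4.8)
The plan is to produce the deformation directly, by moving a compatible alternating two-form along a segment and reading off the associated complex structure through a polar-decomposition construction, so that the path and its polarisable endpoint are produced simultaneously. Throughout, the lattice $\bbZ^n \subseteq \bbR^n$ provides a $\bbQ$-structure on $V_\bbR = \bbR^n$, and since $\mu$ is defined over $\bbQ$, so is the space $(\bigwedge^2 (\bbR^n)^*)^G$ of $G$-invariant alternating forms.

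First I would fix the given $J \in \mathfrak{D}(G)$ and build from it a $G$-invariant, $J$-compatible inner product $g$: start from any inner product, symmetrise it against $J$, and then average over the finite group $G$. Since every $\mu(h)$ commutes with $J$, the result $g$ is $J$-invariant, $G$-invariant and positive definite. Setting $\omega_0(x,y) = g(x, Jy)$ then gives a $G$-invariant real alternating form which is $J$-compatible and satisfies $\omega_0(J\cdot,\cdot) = g > 0$. This $\omega_0$ is real but in general not rational, and the whole point is to nudge it to a rational form without losing control of $J$.

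Next I would approximate and interpolate. Because $(\bigwedge^2(\bbR^n)^*)^G$ is the set of real points of a $\bbQ$-vector space, the rational $G$-invariant forms are dense in it; as non-degeneracy is an open condition and $\omega_0$ is non-degenerate, I can pick a rational $G$-invariant non-degenerate form $E$ arbitrarily close to $\omega_0$. For $E$ close enough, the entire segment $\omega_s := (1-s)\omega_0 + sE$, $s \in [0,1]$, consists of $G$-invariant non-degenerate alternating forms. For each $s$, define the $g$-skew operator $B_s$ by $\omega_s(x,y) = g(B_s x, y)$, put $P_s := \sqrt{-B_s^2} = \sqrt{B_s^* B_s}$ (positive definite since $B_s$ is invertible), and set $J_s := -B_s P_s^{-1}$. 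A direct check gives $J_s^2 = -\id$, that $J_s$ commutes with $\mu(G)$ (because $B_s$, and hence $P_s$ by functional calculus, does), and that $\omega_s(J_s\cdot,\cdot) = g(P_s\cdot,\cdot)$ is positive definite; thus $J_s \in \mathfrak{D}(G)$ and $s \mapsto J_s$ is continuous. Since $B_0 = -J$ and $P_0 = \id$, one has $J_0 = J$.

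Finally, $J_1$ is polarisable: by construction $E$ is $J_1$-compatible with $E(J_1\cdot,\cdot) = g(P_1\cdot,\cdot)$ positive definite, and clearing denominators makes $NE$ integral on $\bbZ^n$ for a suitable positive integer $N$ while preserving compatibility and positivity, so $NE$ is a polarisation for $J_1$ (equivalently, by Riemann's criterion, $(\bbR^n/\bbZ^n, J_1)$ is an abelian variety). Hence $\bar J = (J_s)$ is the required deformation. The main point requiring care is the polar-decomposition step itself: I must arrange the signs so that $J_0 = J$ and $\omega_s(J_s\cdot,\cdot) > 0$ hold simultaneously, keep $-B_s^2$ positive definite along the whole segment (which is exactly the non-degeneracy of $\omega_s$ secured by choosing $E$ near $\omega_0$), and verify that the $G$-equivariance of $J_s$ descends from that of $B_s$ through the functional calculus defining $P_s$. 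Once the polar-decomposition idea is in place these are routine, and the conceptual content lies entirely in the averaging reduction together with the rational approximation of $\omega_0$.
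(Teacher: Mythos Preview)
Your argument is correct. The overall strategy coincides with the paper's---pick a $G$-invariant K\"ahler form $\omega_0$ for $J$, approximate it by a rational $G$-invariant non-degenerate form, and lift the connecting path of forms to a path in $\mathfrak{D}(G)$---but the lifting mechanism is genuinely different. The paper writes $\omega_t = A_t^* \omega_0$ for a path $A_t$ in $\GL(2k,\bbR)$, conjugates $G$ into $\Sp(\omega_0)$ via $A_t$, and then invokes a continuous version of Cartan's fixed-point theorem (the barycenter of the orbit in the Siegel upper half space, Proposition~\ref{prop:Cartancontinuous}) to obtain a $\mu_t(G)$-invariant complex structure varying continuously in $t$; this relies on the symmetric-space apparatus of \S\ref{Cartan}. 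Your polar-decomposition construction $J_s = -B_s\bigl(\sqrt{-B_s^2}\,\bigr)^{-1}$, taken with respect to a \emph{fixed} $G$-invariant inner product $g$, is purely linear-algebraic and sidesteps that machinery entirely; $G$-invariance of $J_s$ follows directly because $B_s$ commutes with $\mu(G)$ and the positive square root is unique. The paper's route has the advantage that the barycenter framework is reused to prove the stronger density statement (Proposition~\ref{prop:density}), whereas your argument is shorter and self-contained for the proposition as stated, with the bonus that the assignment $\omega_s \mapsto J_s$ is explicit and manifestly continuous.
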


Moreover, we will show in Proposition \ref{prop:density} below 
that the set $ {\mathfrak D}(G)_{p}$ of
polarisable complex structures is dense in $ {\mathfrak D}(G)$.
This proves Theorem \ref{thm:deformation} in the introduction.\\

We deduce
\begin{corollary} Let $X_{G} = T/G$ be a finite quotient manifold of a complex torus $T$. Then $X_{G}$ may be continuously deformed to a projective manifold. 
\end{corollary}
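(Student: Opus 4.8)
```latex
\textbf{Proof proposal.}

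The plan is to read the corollary as a direct translation of Proposition
\ref{prop:contdeformation} through the dictionary between complex tori (resp.\
abelian varieties) and complex structures (resp.\ polarisable complex
structures) that is set up in the introduction and used implicitly throughout
Section 3. Since $X_{G} = T/G$ is already given as a finite quotient of a
complex torus, its complex structure is encoded by some $J \in \mathfrak{D}(G)$,
where $G$ acts with holonomy homomorphism $\mu: G \to \GL(k,\bbC)$ as in
Section \ref{construction}. The whole content of the corollary is to realise a
continuous deformation of $J$ inside $\mathfrak{D}(G)$ that lands at a
polarisable structure, and then to argue that such a deformation of $J$
genuinely induces a continuous deformation of the quotient $X_{G}$ to a
projective manifold.

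First I would invoke Proposition \ref{prop:contdeformation} to obtain a
continuous path $\bar J = (J_{t}): [0,1] \to \mathfrak{D}(G)$ with $J_{0} = J$
and $J_{1}$ polarisable. Because each $J_{t}$ is $\mu(G)$-invariant, it defines
a complex structure on $T_{t} = \bbR^{2k}/\Lambda$ for which the $G$-action
remains holomorphic, so the quotients $X_{G,t} = T_{t}/G$ form a continuous
family of complex manifolds restricting to $X_{G}$ at $t = 0$; this is precisely
the statement that $\mathfrak{D}(G)$ is the local deformation space of $X_{G}$.
At the endpoint $t = 1$, the polarisability of $J_{1}$ means, via the Riemann
criterion recalled before Lemma \ref{moduli_prop_picab}, that $T_{1}$ carries a
polarisation $\omega$ with $\omega(\Lambda,\Lambda) \subseteq \bbZ$, so $T_{1}$
is an abelian variety. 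Then $T_{1} \times \tilde{Y}$ (or directly $T_{1}$, in
the torsionfree case) is projective, and by the quotient Lemma
\ref{lemma:fquotients} the finite quotient $X_{G,1} = T_{1}/G$ is projective as
well. Thus $X_{G}$ has been continuously deformed to the projective manifold
$X_{G,1}$, which is the assertion.

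The step I expect to be the main obstacle is not the endpoint projectivity,
which is immediate from Lemma \ref{lemma:fquotients} once $J_{1}$ is known to be
polarisable, but rather the assertion that a continuous path in the
finite-dimensional parameter space $\mathfrak{D}(G)$ actually constitutes a
\emph{deformation of $X_{G}$ as a complex manifold} in the Kodaira--Spencer
sense. One must check that the family $\{(T_{t},G\text{-action})\}_{t}$ fits
together into a differentiable (indeed real-analytic) family over the interval
$I$, so that the induced family of quotients is a genuine deformation with
$X_{G}$ as central fibre; this relies on the identification, quoted from
\cite[p.408ff]{KodairaSpencerI-II} and \cite[\S 2.1]{ShimizuUeno}, of
$\mathfrak{D} = \GL(2k,\bbR)/\GL(k,\bbC)$ as the local deformation space of
$T^{k}$, together with the observation that passing to the $G$-invariant locus
$\mathfrak{D}(G)$ and dividing by the free, holomorphic $G$-action is compatible
with forming families. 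Everything else is a routine repackaging of the results
already proved.
```
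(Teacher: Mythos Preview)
Your proposal is correct and follows the same route as the paper, which simply writes ``We deduce'' and treats the corollary as immediate from Proposition \ref{prop:contdeformation} together with the identification of $\mathfrak{D}(G)$ as the local deformation space of $X_{G}$ stated at the opening of the section. One small remark: the detour through $T_{1}\times\tilde{Y}$ and the ``torsionfree case'' is unnecessary here, since by hypothesis $X_{G}=T/G$ is already a free quotient of a torus, so Lemma \ref{lemma:fquotients} applies directly to the covering $T_{1}\to T_{1}/G$.
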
   

For the proof of Proposition \ref{prop:contdeformation},  we will study
the  cone of   K\"ahler forms $\Omega(G)$ which is associated to $G$
 and relate it to the deformation space ${\mathfrak D}(G)$. 
 
\subsection{K\"ahler forms and Siegel upper half space} 
Let $J: \bbR^{2k} \ra \bbR^{2k}$ be a complex structure. Recall that a non-degenerate two-form 
$\omega \in \bigwedge^2 (\bbR^{2k})^*$ is called compatible with $J$ (or $J$-hermitian), if  $$ \omega( J \cdot, J \cdot ) = \omega( \cdot,\cdot) \; . $$ 
Thus, equivalently, $\omega$
is $J$-hermitian if and only if $J \in \Sp(\omega)$.  If $\omega$ is  $J$-hermitian 
and the  associated
symmetric bilinear form $g_{\omega,J} = \omega(J \cdot, \cdot)$  
is positive definite then $\omega$ is called a \emph{K\"ahler form} for $J$. 

\subsubsection{Complex structures with common K\"ahler form}
We define  
$${\mathfrak D}^+_{\omega}=   \{ J \in {\mathfrak D} \mid  \text{$\omega$ is a K\"ahler form for $J$}  \},  $$
to be the set of all complex structures which admit $\omega$ as K\"ahler form.
Note that ${\mathfrak D}^+_{\omega}$ is non-empty and $\Sp(\omega)$ acts on ${\mathfrak D}^+_{\omega}$ by conjugation.
In fact, this  action is transitive: 

\begin{proposition}
\label{Jswithomega} 
Let $J_{0} \in {\mathfrak D}^+_{\omega}$ be a complex structure which has K\"ahler form $\omega$, and let $J \in \frakD$. Then $J \in {\mathfrak D}^+_{\omega}$\ if and only if there exists $A \in \Sp(\omega)$ such that $J= A J_{0} A^{-1}$.
\end{proposition}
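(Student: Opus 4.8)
The plan is to prove the two implications separately. The reverse (``if'') direction is a direct computation, while the forward (``only if'') direction is exactly the transitivity of the $\Sp(\omega)$-action on ${\mathfrak D}^+_{\omega}$, which I would establish by building a symplectic basis adapted to each complex structure.

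For the easy direction, suppose $A \in \Sp(\omega)$ and $J = A J_{0} A^{-1}$. Then $J^2 = A J_{0}^2 A^{-1} = -\id$, so $J \in \frakD$. Using that $A$, $A^{-1}$, and $J_{0}$ all lie in $\Sp(\omega)$, one checks $\omega(Ju,Jv) = \omega(J_{0}A^{-1}u, J_{0}A^{-1}v) = \omega(A^{-1}u,A^{-1}v) = \omega(u,v)$, so $\omega$ is $J$-hermitian. Similarly $g_{\omega,J}(u,v) = \omega(Ju,v) = \omega(J_{0}A^{-1}u, A^{-1}v) = g_{\omega,J_{0}}(A^{-1}u, A^{-1}v)$, which is the pullback of the positive definite form $g_{\omega,J_{0}}$ under the invertible map $A^{-1}$, hence positive definite. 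Thus $J \in {\mathfrak D}^+_{\omega}$.

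For the forward direction, let $J_{0}, J \in {\mathfrak D}^+_{\omega}$ be arbitrary; I would produce $A \in \Sp(\omega)$ conjugating $J_{0}$ to $J$. The central tool is the observation that whenever $J \in {\mathfrak D}^+_{\omega}$ and $\{v_1,\ldots,v_k, Jv_1,\ldots,Jv_k\}$ is orthonormal for $g_{\omega,J}$, it is automatically a symplectic basis for $\omega$. This follows from the identity $\omega(u,v) = -\,g_{\omega,J}(Ju,v)$ together with the fact that $J$ is orthogonal for $g_{\omega,J}$ (both consequences of $J$-hermiticity): one obtains $\omega(v_i,v_j) = \omega(Jv_i,Jv_j) = 0$ and $\omega(v_i, Jv_j) = \mp\,\delta_{ij}$. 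An orthonormal basis of this prescribed shape is produced by a complex Gram--Schmidt process: choose a $g_{\omega,J}$-unit vector $v_1$, note that $Jv_1 \perp v_1$, then pick $v_2$ in the $g_{\omega,J}$-orthogonal complement of $\mathrm{span}(v_1, Jv_1)$ (which is $J$-invariant, since $J$ is $g_{\omega,J}$-orthogonal), and iterate.

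Carrying this out for $J_{0}$ yields a symplectic basis ${\mathcal B}_0 = \{v_1,\ldots,v_k, J_{0} v_1,\ldots,J_{0} v_k\}$, and for $J$ a symplectic basis ${\mathcal B} = \{w_1,\ldots,w_k, Jw_1,\ldots,Jw_k\}$. I would then define $A$ to be the linear map sending $v_i \mapsto w_i$ and $J_{0} v_i \mapsto J w_i$. Since $A$ carries one symplectic basis to another with identical $\omega$-pairings, it satisfies $A^*\omega = \omega$, i.e.\ $A \in \Sp(\omega)$. By construction $A J_{0} = J A$ on each $v_i$, and also on each $J_{0} v_i$ (both sides send it to $-w_i$), so $A J_{0} A^{-1} = J$. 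The main obstacle is the bookkeeping in the key observation: that $g_{\omega,J}$-orthonormality of a $J$-adapted frame forces it to be $\omega$-symplectic. Once this is in place the construction of $A$ is purely formal, and it expresses precisely that ${\mathfrak D}^+_{\omega}$ is the homogeneous space $\Sp(\omega)/\U(k)$, the Siegel upper half space, on which $\Sp(\omega)$ acts transitively.
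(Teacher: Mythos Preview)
Your proof is correct. Both directions are handled cleanly, and the key step --- that a $g_{\omega,J}$-orthonormal frame of the form $\{v_1,\ldots,v_k,Jv_1,\ldots,Jv_k\}$ is automatically symplectic --- is checked accurately via the identity $\omega(u,v) = -g_{\omega,J}(Ju,v)$ and the $g_{\omega,J}$-orthogonality of $J$.

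Your route, however, differs from the paper's. The paper does not build adapted bases directly; instead it deduces the proposition from a more general conjugacy lemma (Lemma~\ref{Jconjugacy}): two complex structures $J,J' \in \Sp(\omega)$ are $\Sp(\omega)$-conjugate if and only if the signatures of $g_{\omega,J}$ and $g_{\omega,J'}$ agree. The argument there first conjugates $J$ to $J'$ by an \emph{arbitrary} $B \in \GL(2k,\bbR)$, observes that $(B^{-1})^*\omega$ is another $J'$-hermitian form of the same signature as $\omega$, and then corrects $B$ by a $J'$-complex-linear map using the classification of hermitian forms over $\bbC$. The proposition is then the special case where both signatures are positive definite.

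What each approach buys: the paper's method simultaneously yields the full classification of $\Sp(\omega)$-conjugacy classes of compatible complex structures by signature, of which transitivity on ${\mathfrak D}^+_{\omega}$ is just one instance. Your Gram--Schmidt construction is more elementary and explicit --- it produces the conjugating element $A$ concretely --- but is tailored to the positive-definite case and would need modification (working with pseudo-orthonormal bases) to recover the general signature statement.
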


This is a consequence of Lemma \ref{Jconjugacy} below. We obtain 

\begin{corollary}
\label{moduli_siegel}
Let $J \in {\mathfrak D}^+_{\omega}$. 
There is a bijective correspondence 
\begin{eqnarray*}
\xymatrix@1{
 \mathfrak{S} :=  \; \Sp(\omega) \big/ \left(\Sp(\omega) \cap\GL(J)\right)  \ar@1{<->}[d]^{~} 
\\          \mathfrak{D}^+_{\omega} 
} 
\end{eqnarray*}
\end{corollary}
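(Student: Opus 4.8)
The plan is to exhibit the claimed bijection as the orbit--stabiliser correspondence for the conjugation action of $\Sp(\omega)$ on $\frakD^+_{\omega}$, whose transitivity is exactly the content of Proposition \ref{Jswithomega}. Here I read $\GL(J)$ as the centraliser of $J$ in $\GL(2k,\bbR)$, i.e.\ the group of invertible real-linear maps commuting with $J$ (consistent with $\GL(k,\bbC)$ being the stabiliser of the standard complex structure in the description $\frakD = \GL(2k,\bbR)/\GL(k,\bbC)$). The whole statement should then reduce to a purely formal computation of stabilisers, so no genuine analytic or geometric obstacle is expected; the single substantive input is the transitivity already granted by Proposition \ref{Jswithomega}.

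Concretely, first I would fix the base point $J \in \frakD^+_{\omega}$ and define the map
$$
\mathfrak{S} = \Sp(\omega)\big/\bigl(\Sp(\omega)\cap\GL(J)\bigr) \lra \frakD^+_{\omega}, \qquad A\,\bigl(\Sp(\omega)\cap\GL(J)\bigr) \longmapsto A J A^{-1}.
$$
Then I would check the three routine points. First, the image lands in $\frakD^+_{\omega}$: since $A \in \Sp(\omega)$ and $\omega$ is a K\"ahler form for $J$, conjugation by $A$ preserves both the compatibility relation $\omega(J\cdot,J\cdot)=\omega(\cdot,\cdot)$ and the positivity of $g_{\omega,J}$, so $\omega$ remains a K\"ahler form for $AJA^{-1}$. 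Second, well-definedness: if $A' = AB$ with $B \in \Sp(\omega)\cap\GL(J)$, then $B$ commutes with $J$, whence $A'J A'^{-1} = A\,(BJB^{-1})\,A^{-1} = AJA^{-1}$, so the value depends only on the coset.

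It remains to verify bijectivity. Surjectivity is immediate from Proposition \ref{Jswithomega}: every $J' \in \frakD^+_{\omega}$ is of the form $A J A^{-1}$ for some $A \in \Sp(\omega)$, hence lies in the image. For injectivity I would argue that $AJA^{-1} = A'JA'^{-1}$ forces $B := A'^{-1}A$ to satisfy $BJB^{-1} = J$, i.e.\ $B$ commutes with $J$ and so $B \in \GL(J)$; since $A,A' \in \Sp(\omega)$ we also have $B \in \Sp(\omega)$, giving $B \in \Sp(\omega)\cap\GL(J)$, so the two cosets coincide. This simultaneously identifies $\Sp(\omega)\cap\GL(J)$ as precisely the stabiliser of $J$ under the conjugation action, which is the only place where one must be slightly careful — namely to observe that the stabiliser condition $AJA^{-1}=J$ is equivalent to membership in $\GL(J)$. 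Combining transitivity with this stabiliser computation yields the stated bijective correspondence $\mathfrak{S} \leftrightarrow \frakD^+_{\omega}$.
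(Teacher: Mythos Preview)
Your proof is correct and follows exactly the same approach as the paper: the bijection is given by $[A] \mapsto A J A^{-1}$, with surjectivity supplied by Proposition \ref{Jswithomega} and injectivity by the stabiliser computation. The paper's own proof is simply a two-line version of your argument, stating the map and invoking Proposition \ref{Jswithomega} without spelling out well-definedness or the stabiliser identification.
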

\begin{proof}
The claimed correspondence is  established  by the map 
\begin{eqnarray}
 [A] \in \mathfrak{S} \to A J  A^{-1} \; .
\end{eqnarray}
The correspondence is bijective by Proposition \ref{Jswithomega}.
\end{proof}

Note that $\Sp(\omega) \cap \GL(J)$ is the unitary group of the associated hermitian form $g_{\omega,J}$.  It is a maximal compact subgroup of  $\Sp(\omega)$, since $J \in  {\mathfrak D}^+_{\omega}$. 
The space $ \mathfrak{S}$ is customarily called the Siegel upper half plane,
and it is a Riemannian symmetric space in the sense of \'E.\ Cartan (cf.\ \cite{helgason}).

\subsubsection{Conjugacy classes in $\Sp(\omega)$}
Let $J \in \Sp(\omega)$. We put ${\rm sign}\, (\omega,J) $
for the signature of $g_{\omega,J}$. The signature 
classifies the conjugacy classes of complex structures  
$J \in \Sp(\omega)$.

\begin{lemma}
\label{Jconjugacy}
Let $J $, $J'$ be complex structures such that $J$ and  $J'$ are in 
$\Sp(\omega)$.  
Then ${\rm sign}\, (\omega,J) = {\rm sign} \, (\omega,J')$ if and only if
there exists $A \in \Sp(\omega)$ such that $J'= A J A^{-1}$.
\end{lemma}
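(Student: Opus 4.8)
The plan is to treat the two implications separately: the forward direction is a one-line observation, while the converse rests on a normal-form construction for the pair $(\omega, J)$ that exhibits the matrix of $J$ as an invariant of the signature alone.

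For the \emph{only if} direction, suppose $J' = AJA^{-1}$ with $A \in \Sp(\omega)$. Since $A$ preserves $\omega$, a direct computation gives $g_{\omega,J'}(u,v) = \omega(AJA^{-1}u, v) = \omega(JA^{-1}u, A^{-1}v) = g_{\omega,J}(A^{-1}u, A^{-1}v)$, so that $g_{\omega,J'}$ is the pullback of $g_{\omega,J}$ along the invertible map $A^{-1}$. Congruent symmetric forms have equal signature, whence ${\rm sign}(\omega, J) = {\rm sign}(\omega, J')$.

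For the \emph{if} direction, I would first record the linear-algebraic features of $g = g_{\omega,J}$. From $J \in \Sp(\omega)$ one gets the compatibility identity $\omega(Ju, v) = -\omega(u, Jv)$, which shows that $g$ is symmetric and that $J$ is a $g$-isometry, since $g(Ju, Jv) = \omega(J^2 u, Jv) = -\omega(u, Jv) = \omega(Ju, v) = g(u,v)$; the same identity yields $\omega(u,w) = -g(Ju, w)$. The key structural step is then to produce, for any $J \in \Sp(\omega)$, a symplectic (Darboux) basis of $\bbR^{2k}$ adapted to $J$ in which the matrix of $J$ depends only on ${\rm sign}(\omega, J)$. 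I would build this by splitting $\bbR^{2k}$ into a $g$-orthogonal sum of $J$-invariant planes: starting from any $v$ with $g(v,v) \neq 0$, the plane $\operatorname{span}(v, Jv)$ is $J$-invariant and $g$-nondegenerate (one checks $g(v, Jv) = 0$ and $g(Jv, Jv) = g(v,v)$), so one splits it off and inducts on its $J$-invariant $g$-orthogonal complement. Because $\omega(u,w) = -g(Ju, w)$, these planes are automatically $\omega$-orthogonal as well, so assembling them produces a genuine Darboux basis.

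On each plane, rescaling a generator $e$ so that $g(e,e) = \pm 1$ and setting $f = Je$ gives an $\omega$-orthogonal pair with $\omega(e, f) = -g(e,e)$; after possibly interchanging $e$ and $f$, I would arrange each plane to be a standard symplectic plane on which $J$ acts by one of exactly two $2 \times 2$ normal forms, selected by the sign of $g$ on that plane. Since the number of positive (resp.\ negative) planes equals half the positive (resp.\ negative) index of $g$, the block-diagonal matrix of $J$ in this symplectic basis is determined entirely by ${\rm sign}(\omega, J)$ (and in particular the signature is necessarily of the form $(2a,2b)$). To conclude, I apply the construction to both $J$ and $J'$: equal signatures produce adapted symplectic bases in which $J$ and $J'$ have identical matrices and equal numbers of positive and negative planes, so the change-of-basis map $A$ matching the two bases lies in $\Sp(\omega)$ and satisfies $AJA^{-1} = J'$. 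The main obstacle I expect is this simultaneous normalization: arranging one and the same basis to be Darboux for $\omega$ and to put $J$ in a signature-determined normal form, together with the bookkeeping needed to match positive planes to positive planes so that $A$ is genuinely symplectic. The relation $\omega(u,w) = -g(Ju,w)$, which forces the $g$-orthogonal decomposition to be $\omega$-orthogonal, is the linchpin that keeps the two structures aligned throughout.
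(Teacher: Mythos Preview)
Your proposal is correct, and the route is genuinely different from the paper's. The paper does not build a simultaneous normal form for $(\omega,J)$. Instead, it first conjugates $J$ to $J'$ by some $B\in\GL(2k,\bbR)$ (possible because all complex structures are $\GL$-conjugate), observes that $(B^{-1})^*\omega$ is then a $J'$-hermitian form with the same signature as $\omega$, and then invokes the classification of hermitian forms: two $J'$-hermitian forms of equal signature are related by some $\tilde B\in\GL(J')$. The product $A=\tilde B B$ is symplectic and conjugates $J$ to $J'$. So the paper reduces to Sylvester's law for \emph{hermitian} forms over $(\bbR^{2k},J')$, while you prove a bespoke ``symplectic Sylvester'' by hand via the $J$-invariant, $g$-orthogonal plane decomposition. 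Your argument is more self-contained and makes the even-signature constraint $(2a,2b)$ visible, at the cost of some bookkeeping in the $2\times2$ blocks; the paper's argument is shorter because it outsources the normal-form work to a standard fact. Both are clean; the identity $\omega(u,w)=-g(Ju,w)$ that you single out is exactly the hinge linking the $g$- and $\omega$-orthogonal decompositions, and it does the job.
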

\begin{proof} Clearly, ${\rm sign}\, (\omega,J) = {\rm sign} \, (\omega,J')$ if $J'= A J A^{-1}$, for some $A \in \Sp(\omega)$.
We show now that $J$ and 
$J'$ are conjugate in $\Sp(\omega)$ if  ${\rm sign}\, (\omega,J) = {\rm sign} \, (\omega,J')$.
We choose $B \in \GL(2k,\mathbb{R})$ such that $J'= BJB ^{-1}$. Then  
$$ ({B^{-1}})^* \omega \, ( \cdot, \cdot): = \omega(B^{-1} \cdot ,B^{-1} \cdot)$$ 
defines a compatible sympletic form for $J'$. Since  
$$ g_{  ({B^{-1}})^* \omega ,J'} = 
({B^{-1}})^* g_{\omega,J} \; , $$ 
we have ${\rm sign}\, (J',  ({B^{-1}})^* \omega ) = {\rm sign} \, (J',\omega)$. By the existence of a unitary basis, all hermitian symmetric forms for $J'$ of the same signature are equivalent under a $J'$-linear transformation. Hence,  there exists $ \tilde{B} \in \GL(2k,J')$, such that $ ({B^{-1}})^* \omega= {\tilde{B}}^*\omega$.
Now we have  $$  ({\tilde{B} B})^*\omega =  B^* (\tilde{B}^* \omega) =   B^*(  ({B^{-1}})^* \omega) = \omega  \; . $$
Thus, $\tilde{B}B \in \Sp(\omega)$. Since
$J'= \tilde{B} J' \tilde{B}^{-1} = \tilde{B} B J (\tilde{B} B)^{-1}$,  our claim follows.
\end{proof} 

\subsection{The cone of  K\"ahler forms with respect to $G$}
Let $\Omega \subset \bigwedge^2 (\bbR^{2k})^*$ be the set of 
all non-degenerate two forms. Then $\Omega$ is an open cone in the 
vector space $\bigwedge^2 (\bbR^{2k})^*$. 
For a subgroup $G \leq \GL(2k,\bbR)$, we let  
$ \left(\bigwedge^2 (\bbR^{2k})^*\right)^G$
denote the vector space of $G$-invariant two forms. We then 
define 
$$ \Omega(G) =  \; \Omega \, \cap \,\left({\bigwedge}^2 (\bbR^{2k})^*\right)^G \; . $$

Let $J \in \frakD$ be a complex structure. 
If $G$ is a finite (or compact) subgroup of $\GL(J)$, there always
exists a non-degenerate $G$-invariant two-form:

\begin{lemma} \label{lemma:Kaehlercones1}
Let $G \leq \GL(J)$ be a finite subgroup. Then
there exists a $G$-invariant K\"ahler form $\omega$ for $J$. 
\end{lemma}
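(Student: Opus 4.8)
The goal is to produce a $G$-invariant K\"ahler form for $J$, given that $G\leq \GL(J)$ is finite. The plan is to start from any K\"ahler form compatible with $J$ and then average it over $G$ to obtain invariance, checking that the averaging process preserves the K\"ahler property.

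First I would produce an initial K\"ahler form for $J$ without worrying about $G$-invariance. Since $J$ is a complex structure on $\bbR^{2k}$, choose any positive definite $J$-hermitian inner product $h_0$ on $\bbR^{2k}$, i.e. a positive definite symmetric bilinear form with $h_0(J\cdot,J\cdot)=h_0(\cdot,\cdot)$; such a form exists (take any inner product and average it over the action of $J$, or equivalently over the circle group generated by $J$). Then set $\omega_0(\cdot,\cdot)=h_0(J\cdot,\cdot)$. A routine check shows $\omega_0$ is alternating, non-degenerate, satisfies $\omega_0(J\cdot,J\cdot)=\omega_0(\cdot,\cdot)$, and that $g_{\omega_0,J}=\omega_0(J\cdot,\cdot)=h_0(J^2\cdot,\cdot)=-h_0(\cdot,\cdot)$ is (up to sign conventions) positive definite, so $\omega_0$ is a K\"ahler form for $J$ in the sense defined earlier.

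Next I would average over $G$. Define
\begin{equation*}
\omega = \frac{1}{|G|}\sum_{A\in G} A^*\omega_0 \; ,
\end{equation*}
where $A^*\omega_0(u,v)=\omega_0(A^{-1}u,A^{-1}v)$. By construction $\omega$ is $G$-invariant. The key point is that $\omega$ remains a K\"ahler form for $J$. Since each $A\in G$ lies in $\GL(J)$, that is $AJ=JA$, each summand $A^*\omega_0$ is again $J$-hermitian: $A^*\omega_0(J\cdot,J\cdot)=\omega_0(A^{-1}J\cdot,A^{-1}J\cdot)=\omega_0(JA^{-1}\cdot,JA^{-1}\cdot)=\omega_0(A^{-1}\cdot,A^{-1}\cdot)=A^*\omega_0(\cdot,\cdot)$, so the $J$-hermitian condition is preserved under the sum. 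Moreover $g_{A^*\omega_0,J}=A^*g_{\omega_0,J}$ is positive definite for each $A$ (pullback of a positive definite form by the invertible map $A^{-1}$), and a sum of positive definite symmetric forms is positive definite; hence $g_{\omega,J}=\frac{1}{|G|}\sum_A A^*g_{\omega_0,J}$ is positive definite. Positive definiteness in turn forces $\omega$ to be non-degenerate. Therefore $\omega$ is a $G$-invariant K\"ahler form for $J$.

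The main obstacle, such as it is, is purely bookkeeping: one must verify that the commutation relation $AJ=JA$ is exactly what makes $A^*\omega_0$ compatible with the \emph{same} $J$, and that positive definiteness (rather than mere non-degeneracy) is the property that survives averaging, since a sum of non-degenerate forms need not be non-degenerate whereas a sum of positive definite forms is. No deeper difficulty arises because $G$ is finite, so the averaging is a genuine (finite) convex combination and no integration or compactness argument beyond finiteness is needed.
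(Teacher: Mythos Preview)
Your argument is correct and is exactly the averaging argument the paper uses (the paper's proof is the one-line remark ``by averaging $\omega'$ over $G$''); your additional verification that $J$-compatibility and positive definiteness survive the average is the right justification. The only slip is the sign: with $\omega_0(\cdot,\cdot)=h_0(J\cdot,\cdot)$ you get $g_{\omega_0,J}=-h_0$, so take $\omega_0(\cdot,\cdot)=-h_0(J\cdot,\cdot)=h_0(\cdot,J\cdot)$ instead.
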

\begin{proof}  Let $\omega'$ be a  K\"ahler form for $J$. 
Since $G$ is finite, we may find (by averaging $\omega'$ over $G$) 
a $G$-invariant K\"ahler form $\omega$.
\end{proof}
\noindent In particular,  if $G$ is finite the cone $\Omega(G)$ 
is a non-empty and open subset in the vector space $\left(\bigwedge^2 (\bbR^{2k})^*\right)^G$. \\


\begin{lemma} \label{lemma:Kaehlercones2}
Let $\omega \in \Omega(G)$. If  $G$ is finite then
there exists a $G$-invariant complex structure $J  \in {\mathfrak D}(G)$
which has K\"ahler form $\omega$.
\end{lemma}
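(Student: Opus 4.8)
The plan is to build the complex structure explicitly from $\omega$ together with an auxiliary invariant metric, rather than to search abstractly inside $\frakD(G)$. Since $\omega$ is $G$-invariant, $G$ is a finite subgroup of $\Sp(\omega)$. First I would average any inner product over $G$ to obtain a $G$-invariant positive definite symmetric bilinear form $g$ on $\bbR^{2k}$. Writing $\omega(u,v) = g(Au,v)$ defines a unique invertible operator $A$; it is skew with respect to $g$, that is $g(Au,v) = -g(u,Av)$, because $\omega$ is alternating, and it commutes with every element of $G$ because both $\omega$ and $g$ are $G$-invariant (for $S \in G$ one has $S^{-1}AS = A$ from the two invariances).

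Next I would take the polar part of $A$. As $A$ is $g$-skew and invertible, the operator $-A^2$ is positive definite and symmetric, so its positive square root $P := (-A^2)^{1/2}$ is positive definite, symmetric, commutes with $A$, and commutes with $G$. I then set
\[ J := -A P^{-1}. \]
Because $A$ and $P$ commute, a short computation gives $J^2 = A^2 P^{-2} = -\id$, so that $J$ is a complex structure; since $A$ and $P$ both commute with $G$, so does $J$, whence $J \in \frakD(G)$. One also checks directly that $J$ is $g$-orthogonal, using $g(Au,v)=-g(u,Av)$.

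It remains to verify that $\omega$ is a Kähler form for $J$. Compatibility $\omega(J\cdot,J\cdot)=\omega(\cdot,\cdot)$ follows from $AJ=JA$ and the $g$-orthogonality of $J$, since $\omega(Ju,Jv) = g(AJu,Jv) = g(JAu,Jv) = g(Au,v) = \omega(u,v)$. For the positivity of $g_{\omega,J}$ I would use $AJ = -A^2 P^{-1} = P^2 P^{-1} = P$, so that
\[ g_{\omega,J}(u,v) = \omega(Ju,v) = g(AJu,v) = g(Pu,v), \]
which is positive definite because $P$ is. This is the one place demanding care: the naive choice $J=AP^{-1}$ yields $g_{\omega,J}(u,v)=-g(Pu,v)$, negative definite, so the sign in $J=-AP^{-1}$ is forced by the positivity requirement. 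I expect this sign bookkeeping, rather than any conceptual difficulty, to be the only real obstacle.

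As an alternative, non-constructive route, one could note that by Corollary \ref{moduli_siegel} the set $\frakD^+_\omega$ of complex structures admitting $\omega$ as Kähler form is the Siegel upper half space $\mathfrak{S}$, a Riemannian symmetric space of nonpositive curvature on which $G \leq \Sp(\omega)$ acts by isometries; a finite group of isometries of such a space has a fixed point, and a fixed $J$ is exactly a $G$-invariant element of $\frakD^+_\omega$. The explicit construction above is preferable here, however, as it is self-contained and avoids invoking the fixed point theorem.
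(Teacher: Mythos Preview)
Your proof is correct. Your primary argument---constructing $J$ explicitly as the polar part $-AP^{-1}$ of the operator $A$ representing $\omega$ with respect to a $G$-invariant inner product---is a genuinely different route from the paper's. The paper instead takes precisely your ``alternative, non-constructive route'': it observes that $G \leq \Sp(\omega)$ acts on the Siegel upper half space $\mathfrak{D}^+_\omega$ and invokes Cartan's fixed point theorem for compact groups acting on Riemannian symmetric spaces to produce a $G$-fixed $J$. Your polar-decomposition construction is more elementary and entirely self-contained, needing only linear algebra; the paper's approach is less explicit but integrates naturally into its broader program, since the barycenter machinery on $\mathfrak{S}$ is developed in any case (in \S\ref{Cartan}) for the continuous lifting of curves required in Lemma~\ref{lemma:Kaehlerlifting2} and Proposition~\ref{prop:density}. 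It is worth noting that your explicit $J$ is in fact the same point the barycenter argument would find when the base point is chosen compatibly with your metric $g$, so the two constructions are closely related beneath the surface.
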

\begin{proof}  By our assumption $G \leq \Sp(\omega)$.  Then $G$ acts by conjugation on the space ${\mathfrak S}={\mathfrak D}(G)_{\omega}$ of complex structures with K\"ahler form $\omega$, as is described in
 Corollary \ref{moduli_siegel}. By Cartan's theorem
(see \S \ref{Cartan}), $G$ has a fixed point $J$ in  ${\mathfrak D}(G)_{\omega}$.
\end{proof}
We call $\Omega(G)$ the cone of K\"ahler forms for $G$.

\subsubsection{Rational points in $\Omega(G)$} \label{rationalpoints}
Next let
 $$\Omega(G)(\bbQ) = \; \Omega(G) \, \cap \, {\bigwedge}^2 (\bbQ^{2k})^*$$
denote the set of rational points in $\Omega(G)$.  
We remark
\begin{lemma} \label{lemma:rationaldense}
Let $G \leq \GL(2k,\bbQ) \cap \GL(J)$ be a finite subgroup. 
Then the set of rational points 
$\Omega(G)(\bbQ)$ is dense in $\Omega(G)$. 
In particular, $\Omega(G)(\bbQ)$ is non-empty. 
\end{lemma}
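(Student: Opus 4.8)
The plan is to reduce the statement to a purely linear-algebraic density fact, exploiting the hypothesis that $G$ is realised by \emph{rational} matrices. The key structural observation is that the ambient space $W := \left(\bigwedge^2 (\bbR^{2k})^*\right)^G$ of $G$-invariant two-forms, in which the cone $\Omega(G)$ sits as an open subset, is itself defined over $\bbQ$. Once this is established, density of $\Omega(G)(\bbQ)$ in $\Omega(G)$ follows from two standard facts: rational points are dense in a $\bbQ$-rational subspace, and the intersection of a dense subset with an open subset is dense in that open subset.

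First I would spell out the rationality of $W$. Since $G \leq \GL(2k,\bbQ)$, its linear action on $\bbR^{2k}$ is by rational matrices, and the induced action on the space of two-forms $\bigwedge^2(\bbR^{2k})^*$ is given, for each $g \in G$, by a rational endomorphism $\rho(g)$ obtained functorially from $g$ (pullback on the dual followed by the second exterior power). Hence the invariant subspace $W$ is the common kernel of the rational endomorphisms $\rho(g) - \id$, $g \in G$. The solution space of a linear system with rational coefficients admits a basis of rational vectors, so
\[
 W \; = \; W_{\bbQ} \otimes_{\bbQ} \bbR, \qquad W_{\bbQ} := \left({\bigwedge}^2 (\bbQ^{2k})^*\right)^G .
\]
In particular the rational points $W_{\bbQ}$ are dense in $W$ for the Euclidean topology, being $\bbQ$-linear combinations of a fixed rational basis.

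It remains to combine this with the geometry of the cone. By Lemma \ref{lemma:Kaehlercones1}, applied to the finite subgroup $G \leq \GL(J)$, there is a $G$-invariant K\"ahler form for $J$, so $\Omega(G) = \Omega \cap W$ is non-empty; and it is open in $W$ because non-degeneracy is an open condition. Now for any point $\omega \in \Omega(G)$ and any neighbourhood $V \subseteq \Omega(G)$ of $\omega$ in $W$, openness of $V$ together with density of $W_{\bbQ}$ forces $V \cap W_{\bbQ} \neq \emptyset$; since $V \subseteq \Omega(G)$, every such point lies in $\Omega(G) \cap W_{\bbQ} = \Omega(G)(\bbQ)$. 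Thus $\Omega(G)(\bbQ)$ is dense in $\Omega(G)$, and in particular non-empty. The only genuinely substantive step is the rationality of $W$, which relies essentially on the hypothesis $G \leq \GL(2k,\bbQ)$ rather than merely $G \leq \GL(2k,\bbR)$; the remaining density arguments are routine point-set topology.
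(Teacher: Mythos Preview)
Your proof is correct and follows essentially the same approach as the paper: both argue that, since $G$ acts by rational matrices, the invariant subspace $\left(\bigwedge^2(\bbR^{2k})^*\right)^G$ is defined by rational linear equations and hence carries a $\bbQ$-structure, from which density of rational points in the open cone $\Omega(G)$ is immediate. Your write-up is simply more explicit than the paper's (you spell out the kernel description, invoke Lemma~\ref{lemma:Kaehlercones1} for non-emptiness, and unpack the topological step), but there is no substantive difference in strategy.
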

\begin{proof}
Since $G \leq \GL(2k,\bbZ)$,  
the vector subspace $ \left({\bigwedge}^2 (\bbR^{2k})^*\right)^G $ of  $\bigwedge^2(\bbR^{2k})^*$ is defined by rational equations, and it has a $\bbQ$-structure given by
the vector space of rational  solutions $\left(\bigwedge^2 (\bbQ^{2k})^*\right)^{G}$.
\end{proof}
This proves, in particular:

\begin{proposition} \label{prop:symplectic}
Let $G \leq \GL(2k,\bbZ)$ be a finite subgroup, which has
an invariant complex structure $J$. Then there exists $A \in \GL(2k,\bbQ)$
such that $A \, G \, A^{-1} \leq \Sp(2k,\bbQ)$. 
\end{proposition}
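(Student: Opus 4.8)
The goal is Proposition \ref{prop:symplectic}: given a finite subgroup $G \leq \GL(2k,\bbZ)$ with an invariant complex structure $J$, produce $A \in \GL(2k,\bbQ)$ conjugating $G$ into $\Sp(2k,\bbQ)$. The plan is to exhibit a \emph{rational} non-degenerate alternating two-form that is preserved by $G$; once such a form is in hand, putting it into the standard symplectic normal form over $\bbQ$ via a rational change of basis will conjugate $G$ into $\Sp(2k,\bbQ)$.

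First I would observe that the hypotheses of the preceding lemmas are exactly met: since $J$ is a $G$-invariant complex structure we have $G \leq \GL(J)$, and $G$ is finite, so by Lemma \ref{lemma:Kaehlercones1} there exists a $G$-invariant (real) K\"ahler form $\omega$ for $J$. In particular $\Omega(G)$ is a non-empty open cone in the space $\left(\bigwedge^2 (\bbR^{2k})^*\right)^G$ of $G$-invariant two-forms. Because $G \leq \GL(2k,\bbZ)$, Lemma \ref{lemma:rationaldense} applies and tells us that the rational points $\Omega(G)(\bbQ)$ are dense in $\Omega(G)$, hence non-empty. Choose a rational form $\omega_0 \in \Omega(G)(\bbQ)$; by construction $\omega_0$ is a non-degenerate alternating two-form, it is invariant under $G$, and it is defined over $\bbQ$. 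Invariance of $\omega_0$ under every $g \in G$ means precisely that $G \leq \Sp(\omega_0)$, the symplectic group of $\omega_0$.

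It remains to normalise $\omega_0$. By the standard symplectic basis theorem over the field $\bbQ$ (the Gram--Schmidt-type construction for a non-degenerate alternating form works over any field), there exists $A \in \GL(2k,\bbQ)$ such that $A^* \omega_0$ is the standard symplectic form $\omega_{\mathrm{st}}$, equivalently $\omega_0(A^{-1}\cdot, A^{-1}\cdot) = \omega_{\mathrm{st}}(\cdot,\cdot)$. Then for any $g \in G$, conjugating and using $G \leq \Sp(\omega_0)$ shows $A g A^{-1}$ preserves $\omega_{\mathrm{st}}$, i.e.\ $A\,G\,A^{-1} \leq \Sp(2k,\bbQ)$, which is exactly the claim.

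I do not expect any serious obstacle here, since the hard analytic and geometric work (the existence of a $G$-invariant K\"ahler form, and the density of rational points guaranteed by the integrality of $G$) has already been carried out in Lemmas \ref{lemma:Kaehlercones1} and \ref{lemma:rationaldense}. The only point requiring a little care is that the existence of a symplectic basis must be invoked over $\bbQ$ rather than over $\bbR$: this is legitimate because the symplectic normalisation algorithm uses only field operations and the non-degeneracy of $\omega_0$, both available over $\bbQ$. I would state this explicitly to make clear that the resulting conjugating matrix $A$ is rational and that the target group is $\Sp(2k,\bbQ)$ rather than merely $\Sp(2k,\bbR)$.
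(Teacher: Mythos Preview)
Your proposal is correct and is exactly the argument the paper intends: the proposition is stated immediately after Lemma \ref{lemma:rationaldense} with the phrase ``This proves, in particular,'' so the authors are relying on precisely the two steps you spell out, namely picking a rational $G$-invariant non-degenerate two-form via Lemmas \ref{lemma:Kaehlercones1} and \ref{lemma:rationaldense} and then normalising it over $\bbQ$. Your added remark that the symplectic basis construction works over any field is a welcome clarification of a point the paper leaves implicit.
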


We are ready now for the 
\begin{proof}[Proof of Theorem \ref{thm:algequivalence}]
The equivalence of 1.\ and 3.\ is a consequence of
Theorem \ref{thm:deformation}. The implication from 1. \ to 
2.\ is implied by the previous Proposition \ref{prop:symplectic},
and its converse by Lemma \ref{lemma:Kaehlercones2}. 
\end{proof} 

\paragraph{K\"ahler cones and polarisations} 
Let $H^{1,1}(J,\bbR)$ denote the vector subspace  of $J$-hermitian
forms in $\bigwedge^2 (\bbR^{2k})^*$. The set of K\"ahler 
forms $\kappa(J)$ for $J$ is an open convex cone in 
 $H^{1,1}(J,\bbR)$, which is called the K\"ahler 
 cone for $J$. Consider   
 $$ H^{1,1}(J,\bbR)^G = H^{1,1}(J,\bbR) \cap 
  \left({\bigwedge}^2 (\bbR^{2k})^*\right)^G \; $$ 
 we then call  $$ \kappa(J,G) = \kappa(J) \cap
H^{1,1}(J,\bbR)^G$$  the K\"ahler cone for $J$ and $G$. 
(It is, in fact, the K\"ahler cone $\kappa(X_{G})$ of $X_{G}$.)
Note that the complex structure $J$ admits a polarisation if and
only if $\kappa(J,G)$ contains a rational point,
that is, if $$ \kappa(J,G) \cap {\bigwedge}^2 (\bbQ^{2k})^* \,  \neq \emptyset  \; .$$
By Lemma \ref{lemma:Kaehlercones2}, we have that
$$ \Omega(G) = \bigcup_{J \in {\mathfrak D}(G)} \kappa(J,G) \; . $$
Therefore, we may deduce from Lemma \ref{lemma:rationaldense}: 
\begin{corollary} Let $G \leq \GL(J)$ be a finite subgroup. 
Then there exists $J' \in {\mathfrak D}(G)$
such that $\kappa(J',G)$ has a rational point.  
\end{corollary}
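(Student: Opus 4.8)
The plan is to exhibit a single $G$-invariant, rational, non-degenerate two-form $\omega$ that is simultaneously a K\"ahler form for a suitable complex structure $J'$; this $\omega$ will then furnish the desired rational point of $\kappa(J',G)$. In other words, I would construct $J'$ and its rational polarisation together, starting from the form rather than from the complex structure.

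First I would check that the cone $\Omega(G)$ is non-empty. Since $G \leq \GL(J)$ is finite, Lemma \ref{lemma:Kaehlercones1} furnishes a $G$-invariant K\"ahler form for $J$, which is in particular a non-degenerate $G$-invariant two-form; hence $\Omega(G) \neq \emptyset$, and, as already noted, it is open in $\left(\bigwedge^2(\bbR^{2k})^*\right)^G$. Using that $G$ is a finite integral group preserving $J$, Lemma \ref{lemma:rationaldense} then shows that the rational points $\Omega(G)(\bbQ)$ are dense in $\Omega(G)$; in particular I may select some $\omega \in \Omega(G)(\bbQ)$, that is, a $G$-invariant non-degenerate two-form lying in $\bigwedge^2 (\bbQ^{2k})^*$.

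Next I would feed this $\omega$ into Lemma \ref{lemma:Kaehlercones2}: since $\omega \in \Omega(G)$ and $G$ is finite, there exists a complex structure $J' \in {\mathfrak D}(G)$ for which $\omega$ is a K\"ahler form. By construction $\omega \in \kappa(J')$, while $\omega$ is $G$-invariant, so that $\omega \in H^{1,1}(J',\bbR)^G$ and therefore $\omega \in \kappa(J',G) = \kappa(J') \cap H^{1,1}(J',\bbR)^G$. As $\omega$ is rational, it is precisely a rational point of $\kappa(J',G)$, which is the assertion of the corollary.

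I do not expect a serious obstacle, since the statement is essentially a synthesis of the three preceding lemmas; the only point requiring care is the rationality input. One must make sure that $G$ genuinely preserves the standard $\bbQ$-structure on $\bigwedge^2(\bbR^{2k})^*$ (i.e.\ that $G \leq \GL(2k,\bbZ)$), so that Lemma \ref{lemma:rationaldense} applies and the rational $\omega$ it produces is not disturbed when it is subsequently reinterpreted, via Lemma \ref{lemma:Kaehlercones2}, as a K\"ahler form for $J'$. The decomposition $\Omega(G) = \bigcup_{J \in {\mathfrak D}(G)} \kappa(J,G)$ recorded above is exactly the bookkeeping that guarantees the $J'$ supplied by Lemma \ref{lemma:Kaehlercones2} has $\omega$ in its $G$-invariant K\"ahler cone, so the two constructions fit together without loss.
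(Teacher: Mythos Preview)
Your proposal is correct and follows essentially the same route as the paper: the paper simply records the decomposition $\Omega(G) = \bigcup_{J \in {\mathfrak D}(G)} \kappa(J,G)$ and then states the corollary as an immediate consequence of Lemma~\ref{lemma:rationaldense}, which is exactly your argument spelled out in detail. Your closing remark is also apt: the corollary tacitly assumes $G \leq \GL(2k,\bbQ)$ (as in the hypothesis of Lemma~\ref{lemma:rationaldense}), which holds throughout the paper since $G$ arises from the characteristic representation $\mu: G \to \GL(n,\bbZ)$.
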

\noindent In the remainder of this section, we shall show that such $J'$ may 
be obtained by a continuous deformation in ${\mathfrak D}(G)$  
starting from $J$.

\subsection{Lifting of curves from $\Omega(G)$ to $\frakD(G)$}
Let $J \in \frakD$ be a complex structure, and let $\omega \in \Omega$
be a K\"ahler form for $J$. We may lift curves in $\Omega$ starting
in $\omega$ to curves in $\frakD(G)$, starting in $J$:

\begin{lemma}  \label{lemma:Kaehlerlifting1}
Let $\bar \omega = (\omega_{t}): I \ra \Omega$ be a continuous curve
of non-degenerate two-forms, and let $J \in {\mathfrak D}^+_{\omega_{0}}$.
Then there exists a continuous 
curve $\bar J= (J_{t}): I \ra \frakD$, with $J_{0}=J$,
such that $\omega_{t}$ is a 
K\"ahler form for $J_{t}$. 
\end{lemma}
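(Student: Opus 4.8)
The plan is to construct the lift canonically, by fixing one background Euclidean metric and applying, for each $t$, the standard polar-decomposition recipe that assigns to a non-degenerate two-form a compatible positive complex structure. Concretely, set $g_0 = g_{\omega_0,J_0} = \omega_0(J_0\cdot,\cdot)$, which is positive definite since $J \in \frakD^+_{\omega_0}$. For each $t \in I$ define $A_t \in \GL(2k,\bbR)$ by $g_0(A_t u, v) = \omega_t(u,v)$ for all $u,v$. Since $\omega_t$ is alternating and non-degenerate, $A_t$ is invertible and skew-adjoint with respect to $g_0$; hence $-A_t^2 = A_t^{*}A_t$ is symmetric and positive definite. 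Letting $Q_t = \sqrt{-A_t^2}$ be its positive definite symmetric square root, which commutes with $A_t$, I set
$$ J_t = -A_t\, Q_t^{-1} \; . $$

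Then I would verify the three required properties. First, $J_t^2 = A_t^2 Q_t^{-2} = -\id$, so $J_t \in \frakD$. Second, using that $A_t$ is $g_0$-skew while $Q_t$ is $g_0$-symmetric and commutes with $A_t$, a short computation gives $\omega_t(J_t u, J_t v) = \omega_t(u,v)$, so $\omega_t$ is $J_t$-hermitian; moreover $A_t J_t = -A_t^2 Q_t^{-1} = Q_t$, whence $g_{\omega_t,J_t}(u,v) = \omega_t(J_t u, v) = g_0(Q_t u, v)$ is positive definite. Thus $\omega_t$ is a K\"ahler form for $J_t$. Third, the initial condition: with $g_0 = g_{\omega_0,J_0}$ one computes $A_0 = -J_0$, so $-A_0^2 = \id$, $Q_0 = \id$, and therefore $J_0 = -A_0 = J$, as required.

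Continuity of $\bar J$ is the only analytic point. The assignment $t \mapsto A_t$ is continuous (indeed affine-linear in $\omega_t$) because $\omega_t$ varies continuously and $g_0$ is fixed, so $t \mapsto -A_t^2$ is a continuous curve of symmetric positive definite operators. The principal matter to check carefully is that the positive definite symmetric square root depends continuously on such an operator. This follows from the holomorphic functional calculus: one may write $Q_t = \frac{1}{2\pi i}\oint \sqrt{z}\,(z\id + A_t^2)^{-1}\,dz$ over a fixed contour enclosing the spectrum of $-A_t^2$, which stays bounded away from $0$ and $\infty$ uniformly over the compact interval $I$. Given this, $t \mapsto J_t = -A_t Q_t^{-1}$ is continuous, completing the lift. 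I expect no genuine obstacle beyond bookkeeping the sign conventions so that both the positivity of $g_{\omega_t,J_t}$ and the initial condition $J_0 = J$ hold simultaneously.
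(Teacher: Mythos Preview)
Your argument is correct: the polar-decomposition recipe with respect to the fixed metric $g_0 = g_{\omega_0,J}$ produces, for each $t$, a well-defined $J_t \in \frakD^+_{\omega_t}$, the initial condition $A_0=-J$ gives $J_0=J$, and continuity of the positive square root on the compact interval $I$ is justified exactly as you say.

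This is, however, a genuinely different construction from the one in the paper. The paper does not build $J_t$ directly from $\omega_t$; instead it uses the homogeneous-space description $\Omega \cong \GL(2k,\bbR)/\Sp(\omega_0)$, lifts the curve $(\omega_t)$ to a curve $(A_t)$ in $\GL(2k,\bbR)$ with $\omega_t = A_t^{*}\omega_0$, and then sets $J_t = A_t^{-1} J A_t$. Your approach is more explicit and canonical (no arbitrary choice of section of a fibration), and arguably cleaner for this lemma in isolation. The paper's approach, on the other hand, is tailored to what comes next: in the $G$-equivariant version the same curve $(A_t)$ is reused to conjugate $G$ into $\Sp(\omega_0)$, after which a barycenter argument in the fixed symmetric space $\frakD^+_{\omega_0}$ yields a $G$-invariant lift. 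Your polar-decomposition map is not in general $G$-equivariant, so adapting it to that setting would require an additional averaging step anyway.
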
 
\begin{proof}  Note that the transitive action of $\GL(2k,\bbR)$ on $\Omega$
gives an identification $$ \Omega = \GL(2k,\bbR) / \Sp(\omega_{0}) \; .$$
We may thus consider a lift (horizontal lift of the corresponding $ \Sp(\omega_{0})$-principal bundle) of $\bar \omega$, to obtain a continuous 
curve $\bar A =(A_{t}): I \ra \GL(2k,\bbR)$ such that 
$$ \omega_{t} =(A_{t})^* \omega_{0}. $$  Now let 
$J_{0}$ be any complex structure, which
has $\omega_{0}$ as a K\"ahler form. Then 
$$J_{t} = A_{t}^{-1}J_{0} A_{t}$$
defines the desired lift of $\omega_{t}$  to complex structures. 
\end{proof}

We now provide a lifting construction for curves in $\Omega(G)$.

\begin{lemma} \label{lemma:Kaehlerlifting2}
Let $\bar \omega = (\omega_{t}): I \ra \Omega(G)$ be a continuous curve
of non-degenerate $G$-invariant two-forms, and let $J \in \frakD(G)$, such
that $\omega_{0}$ is a K\"ahler form for $J$. Then there exists a continuous 
curve $\bar J= (J_{t}): I \ra \frakD(G)$ of $G$-invariant complex structures, 
with $J_{0} =J$, such that $\omega_{t}$ is a K\"ahler form for $J_{t}$. 
\end{lemma}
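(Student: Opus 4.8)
The plan is to imitate the lifting argument of Lemma \ref{lemma:Kaehlerlifting1}, but to carry it out inside the centraliser of $G$ rather than inside all of $\GL(2k,\bbR)$, so that the resulting complex structures automatically commute with $G$. Let $Z_{G} = \{A \in \GL(2k,\bbR) \mid Ag = gA \text{ for all } g \in G\}$ be the centraliser of $G$. If $A \in Z_{G}$ and $\omega \in \Omega(G)$, then $A^{*}\omega$ is again non-degenerate and $G$-invariant, so $Z_{G}$ acts on $\Omega(G)$; and if $J \in \frakD(G)$, then $A^{-1}JA$ again commutes with $G$, hence lies in $\frakD(G)$. Thus conjugation by elements of $Z_{G}$ carries the fibre $\frakD(G) \cap \frakD^{+}_{\omega_{0}}$ into $\frakD(G) \cap \frakD^{+}_{A^{*}\omega_{0}}$, exactly as in the non-equivariant case.

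The key point, and the main obstacle, is to show that the $Z_{G}$-orbits in $\Omega(G)$ are \emph{open}. I would verify this infinitesimally. Since $\omega_{0}$ is non-degenerate, every $G$-invariant two-form $\eta$ is written uniquely as $\eta(u,v) = \omega_{0}(Su,v)$, and $\eta$ being alternating and $G$-invariant translates into $S$ being $\omega_{0}$-self-adjoint and commuting with $G$, i.e.\ $S \in \mathrm{Lie}(Z_{G})$. The derivative of the orbit map at $\omega_{0}$ sends $X \in \mathrm{Lie}(Z_{G})$ to the form $(u,v) \mapsto \omega_{0}(Xu,v) + \omega_{0}(u,Xv)$; taking $X = \tfrac12 S$ and using $\omega_{0}(u,Sv) = \omega_{0}(Su,v)$ shows this derivative is onto the full space $\left(\bigwedge^{2}(\bbR^{2k})^{*}\right)^{G}$. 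Hence each orbit is open, and therefore also closed, in $\Omega(G)$. In particular the orbit of $\omega_{0}$ contains the whole connected component of $\omega_{0}$, and the orbit map $Z_{G} \to Z_{G}\cdot\omega_{0}$, $A \mapsto A^{*}\omega_{0}$, is a locally trivial fibre bundle.

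With this in hand the argument closes exactly as for Lemma \ref{lemma:Kaehlerlifting1}. The curve $\bar\omega$ lies entirely in the connected component of $\omega_{0}$, hence in the orbit $Z_{G}\cdot\omega_{0}$, so I may lift it through this fibration (subdividing $I$ and using local sections of the submersion) to a continuous curve $\bar A = (A_{t}): I \to Z_{G}$ with $A_{0} = \id$ and $\omega_{t} = A_{t}^{*}\omega_{0}$. Setting $J_{t} = A_{t}^{-1} J A_{t}$ then gives a continuous curve with $J_{0} = J$; each $J_{t}$ commutes with $G$ because $A_{t} \in Z_{G}$ and $J \in \frakD(G)$, so $\bar J$ takes values in $\frakD(G)$, and $\omega_{t} = A_{t}^{*}\omega_{0}$ is a K\"ahler form for $J_{t}$ by the same computation used in Lemma \ref{lemma:Kaehlerlifting1} (pulling back by $A_{t}$ preserves compatibility and positivity of $g_{\omega_{0},J}$).

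In short, the only genuinely new input beyond Lemma \ref{lemma:Kaehlerlifting1} is the surjectivity of the infinitesimal $Z_{G}$-action on $G$-invariant two-forms, which forces the centraliser orbits to be open; everything else is a routine equivariant transcription of the non-equivariant lifting.
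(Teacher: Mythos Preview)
Your argument is correct and takes a genuinely different route from the paper.

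The paper does \emph{not} lift inside the centraliser. It lifts $\bar\omega$ through the full bundle $\GL(2k,\bbR)\to\Omega$, obtaining $A_t\in\GL(2k,\bbR)$ with $\omega_t=A_t^*\omega_0$ but with no control over $[A_t,G]$. It then conjugates $G$ into $\Sp(\omega_0)$ via $\mu_t(g)=A_tgA_t^{-1}$ and invokes the continuous Cartan barycenter construction of \S\ref{Cartan} (Proposition~\ref{prop:Cartancontinuous}) to produce a continuous curve $C_t\in\frakD^+_{\omega_0}$ of $\mu_t(G)$-fixed points with $C_0=J$; finally $J_t=A_t^{-1}C_tA_t$. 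Your key observation---that the infinitesimal $Z_G$-action on $\big(\bigwedge^2(\bbR^{2k})^*\big)^G$ is already surjective, so the $Z_G$-orbits in $\Omega(G)$ are open---lets you lift $\bar\omega$ directly in $Z_G$ and set $J_t=A_t^{-1}JA_t$, bypassing the barycenter machinery of \S\ref{Cartan} entirely. This is more elementary and, incidentally, would also streamline the density argument of Proposition~\ref{prop:density}: a local section of $Z_G\to Z_G\cdot\omega_0$ gives $A_t$ close to the identity when $\omega_1$ is close to $\omega_0$, hence $J_1$ close to $J$, without the metric ball estimates on barycenters. The paper's approach, on the other hand, makes the symmetric-space geometry explicit and yields the canonical description of $J_t$ as the $G$-barycenter of $A_t^{-1}JA_t$ in $\frakD^+_{\omega_t}$, which is conceptually pleasant even if not strictly needed for the lemma.
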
 
\begin{proof} Write $\omega_{t} = {A_{t}}^* \omega_{0}$, as in the proof
of Lemma \ref{lemma:Kaehlerlifting1}. Then consider $$ \mu_{t}: I \ra \Hom(G, \Sp(\omega_{0})) \; , $$ which is given by $\mu_{t}(g) = A_{t} g A_{t}^{-1}$, $g \in G$.
Note here,  since 
 $$G \subset \Sp(\omega_{t}) = 
  A_{t}^{-1}\,  \Sp(\omega_{0}) A_{t} ,$$
 we have that 
 $$\mu_{t}(G) \subset \Sp(\omega_{0}) \; .$$ 
 Now let $$ \bar C= (C_{t}): I \ra \frakD^+_{\omega_{0}}$$ 
 be the continuous curve of barycenters for $\mu_{t}(G) J$, with
 $C_{0}= J$, as is constructed 
 in Proposition \ref{prop:Cartancontinuous}. The corresponding 
 complex structures $C_{t}$ are $\mu_{t}(G)$-invariant, and have
 K\"ahler form $\omega_{0}$. Then it is clear that $$J_{t} = A_{t}^{-1} C_{t} A_{t}$$
is a family of complex structures, such that $J_{t} \in \frakD(G)$ and
$\omega_{t}$ is a K\"ahler form for $J_{t}$. (Remark that $J_{t}$ is actually
the $G$-barycenter of $A_{t}^{-1} J_{0} A_{t}$ in 
${ \mathfrak D}_{\omega_{t}}$.)
\end{proof}

\subsubsection{Deformation to polarisable complex  structures}

We are now ready for the 

\begin{proof}[Proof of Proposition \ref{prop:contdeformation}] 
Choose $\omega_{0} \in \Omega(G)$, such that $\omega_{0}$
is a K\"ahler form for $J$. Let $\omega  \in \Omega(G)(\bbQ)$
be a rational two-form near $\omega_{0} $, and
$\bar \omega = (\omega_{t}): I \ra \Omega(G)$
a continuous path in $\Omega(G)$, such that
$\omega_{1} = \omega$. Then define $\bar J =(J_{t})$ to be the lift of
$\bar \omega$, with $J_{0} = J$, as in Lemma \ref{lemma:Kaehlerlifting2}.  
\end{proof}

\subsubsection{Density of polarisable complex  structures}
We may slightly refine the proof of Proposition \ref{prop:contdeformation} to obtain:
\begin{proposition} \label{prop:density} 
The set of polarisable complex structures
 ${\mathfrak D}(G)_{p}$ is dense in ${\mathfrak D}(G)$.
\end{proposition}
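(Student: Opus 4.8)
The plan is to show that polarisable complex structures form a dense subset of $\mathfrak{D}(G)$ by refining the deformation argument already used in the proof of Proposition \ref{prop:contdeformation}. The key observation is that the construction there produces, starting from an arbitrary $J \in \mathfrak{D}(G)$, a polarisable complex structure $J_{1}$ obtained by lifting a path of $G$-invariant K\"ahler forms from $\omega_{0}$ to a nearby \emph{rational} form $\omega \in \Omega(G)(\bbQ)$. To prove density, I would instead keep track of how close the resulting $J_{1}$ is to the given $J$, and exploit that the rational forms are dense in $\Omega(G)$ (Lemma \ref{lemma:rationaldense}) together with the continuity of the lifting construction.

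First I would fix $J \in \mathfrak{D}(G)$ and choose, by Lemma \ref{lemma:Kaehlercones1}, a $G$-invariant K\"ahler form $\omega_{0}$ for $J$, so that $\omega_{0} \in \Omega(G)$ and $J \in \mathfrak{D}(G)_{\omega_{0}}$. Next, given any neighbourhood $U$ of $J$ in $\mathfrak{D}(G)$, I want to produce a polarisable $J' \in U$. The idea is that the $G$-barycenter construction of Lemma \ref{lemma:Kaehlerlifting2} depends continuously on the target K\"ahler form: if $\omega$ is close to $\omega_{0}$ in $\Omega(G)$, then the lifted complex structure $J_{1}$ (the $G$-barycenter of $A^{-1}J_{0}A$ in $\mathfrak{D}_{\omega}$, where $\omega = A^{*}\omega_{0}$) is close to $J$. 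Concretely, as $\omega \to \omega_{0}$ one may take the connecting element $A \to \id$ in $\GL(2k,\bbR)$, and then the barycenter varies continuously, so $J_{1} \to J$. Since by Lemma \ref{lemma:rationaldense} the rational forms $\Omega(G)(\bbQ)$ are dense in $\Omega(G)$, I may choose such an $\omega$ to be rational and arbitrarily close to $\omega_{0}$, forcing $J_{1} \in U$.

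The remaining point is that when $\omega$ is rational, the resulting $J_{1}$ is genuinely polarisable. This is because $\omega$ itself is then an integral (after scaling) $G$-invariant two-form which is a K\"ahler form for $J_{1}$; that is exactly the condition that $\kappa(J_{1},G)$ contains a rational point, which is the criterion for polarisability recorded in the paragraph on K\"ahler cones and polarisations. Thus every $J_{1}$ so produced lies in $\mathfrak{D}(G)_{p}$, and we have found polarisable structures arbitrarily close to $J$.

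The main obstacle I anticipate is making precise the continuity statement that the $G$-barycenter of a bounded orbit depends continuously on the auxiliary data $(A,\omega_{0})$, and in particular that one can select the connecting family $A_{t}$ (or the single element $A$) so that $A \to \id$ as $\omega \to \omega_{0}$. This rests on the continuity of the barycenter map from Proposition \ref{prop:Cartancontinuous} together with the continuity of the horizontal lift used in Lemma \ref{lemma:Kaehlerlifting1}; both are available, but one must verify that composing them yields a map $\omega \mapsto J_{1}$ that is continuous at $\omega_{0}$ and sends $\omega_{0}$ to $J$. Once this continuity is in hand, density follows immediately from the density of rational forms, so the analytic content is entirely concentrated in controlling the barycenter construction near the starting point.
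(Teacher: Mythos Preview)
Your proposal is correct and follows essentially the same route as the paper: fix a $G$-invariant K\"ahler form $\omega_{0}$ for $J$, approximate it by a rational $\omega \in \Omega(G)(\bbQ)$, and argue that the lifted $G$-barycenter $J_{1}$ (from Lemma~\ref{lemma:Kaehlerlifting2}) stays close to $J$ and is polarised by $\omega$. The paper's proof is precisely the careful $\epsilon$--$\delta$ execution of the continuity step you flag as the main obstacle, using the explicit ball estimate for the barycenter recorded in the remark after Proposition~\ref{prop:Cartancontinuous} together with a local product decomposition $W_{\epsilon} = Z \times V_{\epsilon}$ of $\GL(2k,\bbR)$ near the identity to control the horizontal lift.
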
 
\begin{proof}  Let $J_{0} \in {\mathfrak D}(G)$ with K\"ahler form $\omega_{0}$. 
 Let $\ {\mathfrak D}(G) \cap U$ be a neighbourhood of $J_{0}$, where $U$ is open in $ {\mathfrak D}$. We show that  ${\mathfrak D}(G)_{p} \cap U \neq \emptyset$ as follows:\\
 
 i) Consider the action of $\GL(n,\bbR)$ on  $ {\mathfrak D}$ by conjugation. Choose neighbourhoods $W$ of $1 \in \GL(n,\bbR)$ and $U_{1} \subset {\mathfrak D}$ of $J_{0} $ such that $W U_{1} \subset U$. \\

ii) By continuity of the symmetric space distance function $d$ on  $ {\mathfrak D}^+_{\omega_{0}}$ (cf.\  \S \ref{Cartan}), choose a metric ball $U_{\epsilon}= B_{\epsilon}(J_{0}) \subset  {\mathfrak D}^+_{\omega_{0}}$ with diameter $r$ such that the ball $B_{r+\epsilon}(J_{0})$ is contained in $U_{1}$. \\

 iii) For $g \in G$, consider the map $\pi_{g}:  \GL(n,\bbR) \ra {\mathfrak D}$,
 $A \mapsto A g A^{-1} J_{0} A g^{-1} A^{-1}$. Since $J_{0} \in   {\mathfrak D}(G)$,
 $\pi_{g}(1) = J_{0}$.  Choose an open neighbourhood $W_{\epsilon} \subset W$ of
 $1 \in \GL(n,\bbR)$ such that, for all $g \in G$,  
 $\pi_{{g}}(W_{\epsilon}) \cap  {\mathfrak D}^+_{\omega_{0}}
 \subset U_{\epsilon}$. \\
 
 iv) Consider the quotient map $\pi_{{\Omega}}:  \GL(n,\bbR) \ra {\Omega}$, where
 $\pi_{{\Omega}}(1) = \omega_{0}$. Since  $\pi_{{\Omega}}$ is locally a projection, we may assume that $W_{\epsilon} = Z \times V_{\epsilon}$, where $V_{\epsilon} \subset \Omega$ is a neighbourhood of $\omega_{0}$ and $Z$ is a neighbourhood
of the identity in $\Sp(\omega_{0})$. \\

Now let $\omega_{1} \in V_{\epsilon} \cap \Omega(G)(\bbQ)$ be a rational form, and join $\omega_{0}$ and $\omega_{1}$ by a path $(\omega_{t}): I \ra 
 \Omega(G) \cap V_{\epsilon}$. Using iv), we have a lift $(A_{t}): I \ra V_{\epsilon} \subset W_{\epsilon}$. Let  $\mu_{t}(G)  = A_{t} G A_{t}^{-1}$. Then the orbit $\mu_{t}(G) J_{0}$ 
is contained in ${\mathfrak D}^+_{\omega_{0}}$. It follows by iii) that $\mu_{t}(G) J_{0} \subset U_{\epsilon}$.  Using the remark following Proposition \ref{prop:Cartancontinuous} it follows from ii) that the associated curve of barycenters $\bar C$ is contained in $U_{1}$. Moreover, since $A_{t} \in W$, it follows by i) that the curve of $G$-invariant complex structures $J_{t}: I \ra {\mathfrak D}(G)$ is contained in $U$. Therefore, $J_{1} \in U \cap  {\mathfrak D}(G)$ is polarisable with
K\"ahler form $\omega_{1}$. 
\end{proof}

\subsection{The barycenters of a continuous deformation} \label{Cartan}
Let $G/K$ be a symmetric space, where $G$ is a real semisimple Lie group with finite center, and $K$ is a maximal compact subgroup. 
The following fact is due to \'E.\  Cartan (see \cite{helgason}[Chapter I,
Theorem 13.5]): Let $\mu \leq G$ be a finite (or compact) subgroup.
Then there exists a point $s \in G/K$ such that $\mu s = s$.\\

We shall need the following refinement of Cartan's result.

\begin{proposition} \label{prop:Cartancontinuous}
Let $\varphi_{t}: I \ra \Hom(\mu,G)$ be a continuous deformation,
where $\varphi_{0} (g) =g$, for all $g \in \mu$. Let $q  \in G/K$ be a fixed
point for $\mu$. Then there exists a canonical 
continuous curve $\bar q = (q_{t}): I \ra G/K$ 
with $\varphi_{t}(\mu) q_{t} = q_{t}$, and $q_{0}=q$.  
\end{proposition}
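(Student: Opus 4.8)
The plan is to realise $q_t$ as the canonical \emph{barycenter} (Riemannian centre of mass) of the finite orbit of $q$ under $\varphi_t(\mu)$, exploiting that $X = G/K$ is a Hadamard manifold. Since $G$ is semisimple with finite centre and $K$ is maximal compact, $X$ with its $G$-invariant metric is complete, simply connected and of nonpositive sectional curvature. On such a space the squared distance $x \mapsto d(x,p)^2$ to a fixed point $p$ is uniformly convex along geodesics, its Hessian being bounded below by twice the metric. Hence for any finite family $p_1,\dots,p_m \in X$ the proper function $f(x) = \sum_{i=1}^{m} d(x,p_i)^2$ is strictly convex, with Hessian bounded below by $2m$ times the metric, and so has a unique minimiser $b(p_1,\dots,p_m)$. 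This barycenter is the canonical refinement of the fixed point whose bare existence is asserted by Cartan's theorem above, and it is equivariant: if an isometry $\phi$ of $X$ permutes the set $\{p_1,\dots,p_m\}$, then $f \circ \phi^{-1} = f$, so $\phi$ maps the minimiser to itself.

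First I would set $q_t := b\big((\varphi_t(g)\,q)_{g \in \mu}\big)$. Because $\varphi_t$ is a homomorphism, each $\varphi_t(h)$ sends $\varphi_t(g)q$ to $\varphi_t(hg)q$ and therefore permutes the orbit; by equivariance of the barycenter it fixes $q_t$, whence $\varphi_t(\mu)\,q_t = q_t$, as required. At $t = 0$ we have $\varphi_0(g) = g$ and $\mu q = q$, so the orbit degenerates to the single point $q$ and $q_0 = b(q,\dots,q) = q$. Since the recipe refers only to $\varphi_t$ and $q$, and to no auxiliary choice, the resulting curve is canonical.

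The hard part will be the continuity of $t \mapsto q_t$. The points $\varphi_t(g)\,q$ depend continuously on $t$ (each $t \mapsto \varphi_t(g)$ is continuous and the $G$-action on $X$ is continuous), so the functions $f_t(x) = \sum_{g \in \mu} d\big(x,\varphi_t(g)q\big)^2$ and their gradients converge locally uniformly as $t \to t_0$. I expect the real work to lie in converting the \emph{uniform} convexity of $f_t$ into a modulus of continuity for its argmin: the curvature bound yields $f_t(x) \ge f_t(q_t) + |\mu|\, d(x,q_t)^2$, and, combined with the uniform smallness of $|f_t - f_{t_0}|$ on a fixed ball around $q_{t_0}$, this pins $d(q_t,q_{t_0})$ down and gives continuity. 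Equivalently, $q_t$ is the unique zero of the smooth vector field $x \mapsto \sum_{g} \exp_x^{-1}(\varphi_t(g)q)$, whose derivative in $x$ is the positive definite Hessian of $f_t$, so the implicit function theorem produces a continuous (indeed smooth) solution $q_t$. This continuity of the centre of mass on Hadamard manifolds is classical, so I would either invoke it directly (cf.\ \cite{helgason}) or record the short convexity estimate above.
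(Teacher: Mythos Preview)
Your proposal is correct and follows essentially the same approach as the paper: both define $q_t$ as the barycenter of the orbit $\varphi_t(\mu)\,q$, use equivariance to conclude $\varphi_t(\mu)q_t=q_t$, and observe that at $t=0$ the orbit collapses to $q$. The only difference is in the continuity argument: the paper restricts to a compact ball and proves two elementary metric-space lemmas showing that the unique minimiser of a continuous family $f(t,\cdot)$ varies continuously, whereas you invoke the uniform convexity of $f_t$ (equivalently the implicit function theorem applied to its gradient) to reach the same conclusion.
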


For the proof of Proposition \ref{prop:Cartancontinuous}, we 
need two elementary lemmata: 

\begin{lemma} \label{Lemma1}
Let $X$ be a compact metric space, and $f: I \times X \to \bbR$
a continuous function, where $I= [0,1]$. For $t \in I $, define 
$$ \bar f (t) = \min \{ f(t,q) \mid q \in X \} \, \; . $$
Then $\bar f: I \to \bbR$ is continuous. 
\end{lemma}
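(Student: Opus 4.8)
The statement to prove is Lemma~\ref{Lemma1}: if $X$ is a compact metric space and $f: I \times X \to \bbR$ is continuous, then the pointwise minimum $\bar f(t) = \min\{f(t,q) \mid q \in X\}$ is continuous on $I = [0,1]$. The plan is to first observe that the minimum is actually attained: for each fixed $t$, the function $q \mapsto f(t,q)$ is continuous on the compact space $X$, hence attains its infimum, so $\bar f(t)$ is well defined as a minimum rather than merely an infimum. The core of the argument is then a standard uniform-continuity estimate exploiting the compactness of the \emph{product} $I \times X$.

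\textbf{Key steps.} First I would invoke that $I \times X$ is compact (as a product of compact spaces) and metrizable, so that $f$ is \emph{uniformly} continuous on $I \times X$. This is the crucial leverage: given $\varepsilon > 0$, there is $\delta > 0$ such that $|f(t,q) - f(t',q')| < \varepsilon$ whenever $(t,q)$ and $(t',q')$ are within $\delta$ in the product metric; in particular, taking $q = q'$, we get $|f(t,q) - f(t',q)| < \varepsilon$ for all $q \in X$ simultaneously whenever $|t - t'| < \delta$. Second, I would fix $t, t'$ with $|t - t'| < \delta$ and let $q_t, q_{t'} \in X$ be minimizers for $\bar f(t)$ and $\bar f(t')$ respectively. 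Then the two-sided estimate
\begin{equation*}
\bar f(t') = f(t',q_{t'}) \le f(t',q_t) < f(t,q_t) + \varepsilon = \bar f(t) + \varepsilon
\end{equation*}
gives $\bar f(t') - \bar f(t) < \varepsilon$, and by symmetry (swapping the roles of $t$ and $t'$) one obtains $\bar f(t) - \bar f(t') < \varepsilon$ as well. Hence $|\bar f(t) - \bar f(t')| < \varepsilon$ whenever $|t - t'| < \delta$, which is exactly continuity (indeed uniform continuity) of $\bar f$.

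\textbf{Main obstacle.} There is no serious obstacle here; this is a routine compactness argument and the lemma is entirely elementary (it is flagged as such in the text). The only point requiring a moment of care is the comparison in the displayed inequality: one must use that $q_t$ is a \emph{competitor} for the minimum defining $\bar f(t')$, so $\bar f(t') \le f(t',q_t)$, and then bound $f(t',q_t)$ against $f(t,q_t) = \bar f(t)$ using uniform continuity. The symmetric role of the two minimizers $q_t$ and $q_{t'}$ ensures the estimate goes both ways, yielding the absolute value bound. Uniform continuity (rather than mere pointwise continuity of each slice) is what makes the single $\delta$ work uniformly over all $q \in X$, and this is precisely where the compactness of $X$ is used a second time, packaged into the compactness of $I \times X$.
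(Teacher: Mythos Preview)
Your argument is correct and complete; it even yields uniform continuity of $\bar f$. However, the paper proceeds differently. Rather than invoking uniform continuity on the product $I\times X$, it argues via semicontinuity: since $\bar f$ is the pointwise infimum of the continuous family $t\mapsto f(t,q)$, it is automatically upper semicontinuous; lower semicontinuity is then shown by a sequential compactness argument in $X$ (pick minimisers $q_m$ along a sequence $t_m\to t$, extract a convergent subsequence $q_m\to p$, and use continuity of $f$ to conclude $\bar f(t_m)\to f(t,p)\geq \bar f(t)$). Your approach is more direct and exploits compactness of $I$ as well, packaged into uniform continuity; the paper's approach uses only compactness of $X$ and would work verbatim with $I$ replaced by any metric space. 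The paper's sequential style also dovetails with the proof of the subsequent Lemma~\ref{Lemma2}, which reuses the same extraction-of-minimisers idea.
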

\begin{proof}  
As an infimum of continuous functions, $\bar f$ is 
upper semi-\-continuous.  It remains to show that 
$\bar f$ is lower semi-continuous. That is, we show that
given any sequence $t_{m} \ra t$, $\liminf \bar f(t_m) \geq \bar f(t)$.
Assume to the contrary that $\lim \bar f(t_m) < \bar f(t)$, for
some sequence $t_{m} \ra t$. We use the fact, that there exists
a subsequence converging to $\liminf$. Since the minimum of $f(t_{m}, \cdot)$ is
assumed on $X$, there exist $q_{m} \in X$ such that
$f(t_{m},q_{m}) = \bar f(t_{m})$. By compactness of $X$, 
we may therefore also assume that $q_{m} \to p \in X$. 
By continuity of $f$, $\bar f(t_{m})=f(t_{m}, q_{m}) \to f(t,p) \geq \bar f(t)$. 
A contradiction.
\end{proof}

\begin{lemma}  \label{Lemma2} With the assumptions of Lemma \ref{Lemma1}, 
assume that the minimum $\bar f_{}(t)$ is attained at a unique point $q_{t} \in X$. 
Then the curve $t \to q_{t}$ is continuous. 
\end{lemma}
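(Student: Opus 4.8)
The plan is to prove continuity of $t \mapsto q_t$ by a standard compactness-plus-uniqueness argument, exploiting that the minimum value $\bar f(t)$ is already known to be continuous by Lemma \ref{Lemma1}. First I would argue by contradiction: suppose the curve $t \mapsto q_t$ fails to be continuous at some $t_0 \in I$. Then there exists a sequence $t_m \to t_0$ and an $\epsilon > 0$ such that $d(q_{t_m}, q_{t_0}) \geq \epsilon$ for all $m$, where $d$ denotes the metric on $X$. The goal is to extract from this a second point at which the minimum at $t_0$ is attained, contradicting uniqueness.

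The key step is to use the compactness of $X$ to pass to a convergent subsequence. By compactness, after replacing $(t_m)$ by a subsequence, I may assume $q_{t_m} \to p$ for some $p \in X$. Since $d(q_{t_m}, q_{t_0}) \geq \epsilon$ for all $m$, passing to the limit gives $d(p, q_{t_0}) \geq \epsilon$, so in particular $p \neq q_{t_0}$. Now I would compute the value of $f$ at the limit point: by continuity of $f$ on $I \times X$, together with $f(t_m, q_{t_m}) = \bar f(t_m)$ and the continuity of $\bar f$ established in Lemma \ref{Lemma1}, I obtain
\begin{equation*}
f(t_0, p) = \lim_{m \to \infty} f(t_m, q_{t_m}) = \lim_{m \to \infty} \bar f(t_m) = \bar f(t_0) \; .
\end{equation*}
Thus $p$ is a point of $X$ at which the minimum $\bar f(t_0)$ is attained.

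This produces the contradiction: both $p$ and $q_{t_0}$ realise the minimum $\bar f(t_0)$, yet $p \neq q_{t_0}$, which violates the hypothesis that the minimiser at $t_0$ is unique. Hence no such discontinuity exists, and $t \mapsto q_t$ is continuous. The main obstacle, such as it is, lies in correctly chaining the two limits — that $f(t_m, q_{t_m})$ converges both to $f(t_0, p)$ (by joint continuity of $f$ and convergence of both arguments) and to $\bar f(t_0)$ (by Lemma \ref{Lemma1}) — so that the uniqueness assumption can be brought to bear. Everything else is routine compactness bookkeeping; no deeper geometric input is needed here, since the hard analytic content has been isolated into Lemma \ref{Lemma1} and the eventual application to barycenters in Proposition \ref{prop:Cartancontinuous}.
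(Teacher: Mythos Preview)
Your proof is correct and essentially the same as the paper's: both use compactness of $X$ to extract a convergent subsequence of $(q_{t_m})$, then combine the continuity of $f$ with the continuity of $\bar f$ from Lemma~\ref{Lemma1} to conclude that the limit point is a minimiser, forcing it to equal $q_{t_0}$ by uniqueness. The only cosmetic difference is that the paper phrases this via the subsequence criterion for continuity (every sequence $t_m \to t$ has a subsequence with $q_{s_m} \to q_t$) rather than by contradiction.
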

\begin{proof}  It is clearly enough to show that every sequence $t_{m} \ra t$, has 
a subsequence $s_{m}$ such that $q_{s_{m}} \to q_{t}$. By Lemma \ref{Lemma1}, 
$\bar f$ is continuous. Therefore, $ f(t_{m},q_{t_{m}})= \bar f(t_{m}) \to \bar f(t)= f(t,q_{t})$. Since $X$ is compact, there exists a subsequence $s_{m}$, such that $q_{s_{m}} \to p \in X$. By continuity of $f$, $f(t,p) = \bar f(t)$, and,  therefore, $f(t, \cdot)$ attains its minimum at $p$. Since, by our assumption, the minimum point is unique, we conclude that $p= q_{t}$. Therefore,  we have $q_{s_{m}} \to q_{t}$.  
\end{proof}

\begin{proof}[Proof of Proposition \ref{prop:Cartancontinuous}]
Let $d$ denote the distance function on the symmetric space $G/K$.
By \cite{helgason}[Chapter I,Theorem 13.5]), given $p_{0} \in X$,  
the  continuous function $$ f: X \ra \bbR \, , \,
f(q) = \sum_{g \in \mu} d^2(q, g p_{0})$$ attains its minimum at a 
\emph{unique} point $q_{0} \in G/K$
(the barycenter of the orbit $ \mu \, p_{0})$.  Moreover,  $q_{0} \in C$,
where $C$ is a compact ball around $p_{0}$ 
containing $\mu \, p_{0}$.  Therefore, 
the function 
$$ f(t, q) =  \sum_{g \in \mu} d^2(q, \varphi_{t}(g) p_{0})$$ satisfies the 
assumptions of the above two lemmata. Then the minimum point $q_{t}$ 
for $\bar f_{t}$ is  the barycenter for the orbit $ \varphi_{t}(\mu) p_{0}$,
and $q_{t}$ is a fixed point for $ \varphi_{t}(\mu)$. Thus, by Lemma \ref{Lemma2},
the curve of barycenters $t \to q_{t}$ is continuous. Choosing, $p_{0}=q$, 
we obtain $q_{0} = q$, as required.
\end{proof}

\begin{remark} The ball $C = C(p_{0})$, which appears in the proof may be chosen
as follows. Let $B_{r}(p_{0})$  be a metric ball of radius $r$, which contains the orbit $ \mu \, p_{0}$, and let $\ell$ denote its diameter.  Then $f(q) >  f(p_{0})$, for all $q \notin B_{r+\ell}(p_{0})$. Thus, we may take $C$ to be the closure of  
$B_{r+\ell}(p_{0})$. 
\end{remark}

\section{Fields of definition}

As we have seen, the set of polarisable complex structures 
$ \mathcal{D}_{p}(G)$ is dense in the space of all
$G$-invariant complex structures  $\mathcal{D}(G)$.
In the following, we explicitly construct points in 
$\mathcal{D}_{p}(G)$ which have additional symmetry. 
For this, we employ the approach used by Johnson in
\cite{Johnson} (also implicit in \cite{Serre2}). 
As an advantage, we can deduce information about the
fields of definition of the abelian varieties, which appear in
the constructions in section \ref{construction}. 
This leads us to the proof of Theorem \ref{thm:numberfields}

\subsection{Preliminaries} 
We collect some basic material from the theory of
abelian varieties. Our principal references are 
\cite{Lang} and \cite{Shimura}.

\paragraph{Field of definition for an abelian variety} \label{sect:fieldsofdef1}

Let  $T= \mathbb{C}^g /\Lambda$ be a complex torus, and   $$End(T):= \{f \in End(\mathbb{C}^g)\; | \; f \Lambda \subset \Lambda \}$$ 
 the endomorphism ring of $T$.
Recall that $T$ is called an abelian variety if it embeds holomorphically
as a subvariety of a projective space. 
An abelian variety defined over a subfield  $k$ of $\bbC$ 
is a  complex projective variety $X$ defined over $k$,
which has an algebraic group structure defined over $k$. 
If $T$ is biholomorphic to an abelian variety 
$X$ defined over $k$, then we will say that $T$ is defined over $k$.
Correspondingly, the endomorphism ring $End(T)$ of $T$ 
is said to be defined over $k$ if its elements 
correspond to  $k$-defined algebraic automorphisms of $X$. 

\paragraph{The quotient of a projective variety by a finite group}
Let $X$ be a complex projective variety defined over a subfield $k$ of $\bbC$.
We let $\Aut_k(X)$ denote the group of $k$-defined automorphisms 
of $X$.  

\begin{lemma}
\label{lemma:algebraicquotient}
Let $Z$ be a complex projective variety defined over 
$k$ and let $G \subset
\Aut_k(Z)$ be a finite subgroup. Then the space of orbits
 $Z/G$ admits the structure of a $k$-defined projective variety,
 and $Z \to Z/G$ is 
 a finite morphism defined over $k$. 
\end{lemma}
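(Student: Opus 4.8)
The plan is to construct $Z/G$ directly by invariant theory, realising it as a $\operatorname{Proj}$ over $k$ so that the $k$-structure and the projectivity are produced simultaneously. First I would fix an ample line bundle $L_{0}$ on $Z$ defined over $k$ (available since $Z$ is projective over $k$) and replace it by the $G$-invariant bundle $\bigotimes_{g \in G} g^{*}L_{0}$, which is again ample. Passing to a suitable power $L$, I may assume in addition that $L$ is very ample and carries a $G$-linearisation defined over $k$. With such an $L$ in hand, the graded $k$-algebra $S = \bigoplus_{n \geq 0} H^{0}(Z, L^{\otimes n})$ inherits a grading-preserving action of $G$ by $k$-linear automorphisms, and, $L$ being ample, one recovers $Z \cong \operatorname{Proj} S$ over $k$.

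Next I would pass to invariants. The invariant subring $S^{G} = \bigoplus_{n \geq 0} H^{0}(Z, L^{\otimes n})^{G}$ is a finitely generated graded $k$-algebra, and $S$ is a finite $S^{G}$-module; in characteristic zero this is Noether's finiteness theorem for the finite group $G$. I then define $Z/G := \operatorname{Proj} S^{G}$, a projective variety over $k$, and let $\pi \colon Z = \operatorname{Proj} S \to Z/G$ be the morphism induced by the inclusion $S^{G} \hookrightarrow S$. Because $S$ is integral and finite over $S^{G}$, the map $\pi$ is everywhere defined, finite, and surjective, and by construction it is defined over $k$.

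It remains to identify $\operatorname{Proj} S^{G}$ with the actual orbit space and to verify the stated properties after base change to $\bbC$. Here I would use that the formation of $G$-invariants commutes with the flat extension $\bbC/k$, so that $S^{G} \otimes_{k} \bbC = (S \otimes_{k} \bbC)^{G}$; this reduces the complex-analytic assertions to the classical case over $\bbC$. Working locally on a cover of $Z$ by $G$-stable affine opens $\operatorname{Spec} A$ (obtained by intersecting the $G$-translates of an affine neighbourhood of an orbit), the quotient is $\operatorname{Spec} A^{G}$, and since $G$ is finite and $\operatorname{char} k = 0$ the invariants separate $G$-orbits; thus $\pi$ realises on $\bbC$-points the topological quotient $Z(\bbC) \to Z(\bbC)/G$. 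Together with the finiteness of $\pi$, this yields all the assertions of the lemma over $k$.

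The main obstacle I anticipate is not the bare existence of the quotient — that is standard invariant theory — but arranging the line bundle $L$ to be simultaneously $G$-linearised and defined over $k$. The obstruction to linearising a $G$-invariant line bundle is a torsion class (it lives in a group of the shape $H^{2}(G, \mathbb{G}_{m})$), so it is killed after replacing $L$ by a power; the delicate point is to carry out this step Galois-equivariantly, so that the resulting linearisation, and hence the graded algebra $S^{G}$ and the morphism $\pi$, descend to $k$ rather than merely to $\bar{k}$ or to $\bbC$.
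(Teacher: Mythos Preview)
Your approach is essentially the same as the paper's: it too defines $Z/G$ as $\operatorname{Proj}$ of the $G$-invariants in the homogeneous coordinate ring, invokes Noether's theorem for finite generation over $k$, and observes that taking invariants commutes with the base change $k \hookrightarrow \bbC$. The paper merely sketches this and refers to Serre and Shimura for details, whereas you are more careful about one point the paper glosses over---namely, arranging a $G$-linearised ample line bundle over $k$ so that $G$ genuinely acts on the graded ring---and you also spell out the finiteness of $\pi$ and the identification with the orbit space, all of which is correct.
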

\begin{proof}
For the standard  proof
see \cite[\S 13]{serre} or \cite[Ch. 4.3, Prop 16]{Shimura}.
We briefly sketch a more specialised argument
which works in the context of projective varieties.
Let $R$ be the homogeneous coordinate ring of $Z$. Then
$R= R_k \otimes \mathbb{C}$, where $R_k$ is a finitely generated
graded $k$-algebra. According to Noether's theorem, 
the algebra of invariants  $R_k^G $ is a finitely generated graded $k$-algebra. 
It follows that 
$R^G = R_k^G \otimes \mathbb{C}$ is a finitely generated
$\mathbb{C}$-algebra with $k$-structure. 
To any such algebra $A$ one can assign a scheme, called $proj(A)$, such 
that $proj(A)$ is a projective variety defined over 
$k$. See  \cite[p. 282]{Mumford2}.
Then $Z/G= proj(R^G)$
is the desired quotient variety.
\end{proof}

Note that $\pi: Z \ra Z/G$ is a quotient in the categorical sense. This means, given
any $k$-defined morphism $f: Z \ra Y$, where $Y$ is a variety defined over $k$,
and $f$ is constant on $G$-orbits, there exists a unique $k$-defined morphism 
$\bar f: Z/G \ra Y$ such that $f = \bar f \pi$.

\paragraph{Abelian varieties with complex multiplication}
A number field $F$ is called a CM-field, if $F$ is a totally imaginary quadratic
extension of a totally real number field $E$. In particular,  we have
$[F:\mathbb{Q}] = 2g$ for  $g=[E: \mathbb{Q}]$ and $F= E(a)$ for a totally
imaginary $a \in F$
with $a^2 \in E$. A choice of $g$ non conjugate embeddings       
$\varphi_i:
F \hookrightarrow \mathbb{C}$, $i = 1, \ldots ,g$, is called 
a CM-type. It gives  rise to an  embedding  
$\varphi= (\varphi_i)_{i=1, \ldots ,g} : F \to \mathbb{C}^g$ which extends to
an isomorphism $\Phi: F  \otimes_{\mathbb{Q}} \mathbb{R} \to \mathbb{C}^g $.

Let $T$ be a complex torus, and 
 $End_{\mathbb{Q}}(T):= End(T) \otimes \mathbb{Q}$. 
Then $T$
is said to have complex multiplication by a CM-field $F$, if there exists an embedding $\iota: F
\to
End_{\mathbb{Q}}(T)$ such that $2g= 2 dim (T)$. 

For any  CM-field $F$ one can  construct an abelian variety
with complex multiplication by  $F$. 
Let $\Lambda$ 
be a lattice in $F$. 
Then $\Phi(\Lambda) \subset
\mathbb{C}^g$ is a lattice in
 $\mathbb{C}^g$ and $$ X_F:= \mathbb{C}^g
/\Phi(\Lambda) $$  is a complex torus with $F \subset
End_{\mathbb{Q}}(X_F)$. Since  $X_{F}$ admits a 
polarisation (see \cite[Thm.\ 4.1]{Lang} or \cite[Ch. 6]{Shimura}), 
it is an abelian variety with complex multiplication by $F$.
Since $X_{F}$ has complex multiplication 
by a CM-field $F$, $X_{F}$ and $End(X_{F})$ may be defined over an algebraic number field $k$ (see \cite[Ch.5; Prop.\ 1.1]{Lang} or  \cite{Shimura}).


\subsection{$\mathbb{Q}[G]$-modules with complex multiplication}
Let $G$ be a finite group and let $V$ be a finite dimensional  
$\mathbb{Q}[G]$-module. If there exists a homomorphism of 
rings $F \ra End_{\mathbb{Q}[G]}(V)$, where $F$ is a CM-field, 
we say that $V$ has complex multiplication by $F$.  



\begin{proposition}
\label{prop:specialJ}
Let $V$ be a $\mathbb{Q}[G]$-module, which has complex multiplication by $F$. 
Then there exists a complex structure $J_{F}$ on
$V \otimes_{\mathbb{Q}} \mathbb{R}$,  such that 
$F \subset End(V \otimes_\mathbb{Q} \mathbb{R},J_{F})$ and 
$J_{F} \in End_{\mathbb{R}[G]}(V\otimes_{\mathbb{Q}} \mathbb{R})$.
\end{proposition}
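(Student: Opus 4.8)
The plan is to exploit the CM-type structure attached to $F$ to produce a distinguished complex structure. Since $V$ has complex multiplication by $F$, we have a ring homomorphism $F \to \End_{\bbQ[G]}(V)$, so $V$ becomes a module over $F$, hence a vector space over $F$, and in particular $\dim_{\bbQ} V$ is divisible by $[F:\bbQ]=2g$. First I would recall that $F$ is a totally imaginary quadratic extension of a totally real field $E$, and fix a CM-type $\varphi = (\varphi_i)_{i=1,\dots,g}$, giving an embedding $\varphi: F \to \bbC^g$ which extends to the $\bbR$-algebra isomorphism $\Phi: F \otimes_{\bbQ} \bbR \to \bbC^g$ recalled in the preliminaries. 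The key algebraic point is that $V \otimes_{\bbQ} \bbR$ becomes a module over $F \otimes_{\bbQ} \bbR \cong \bbC^g$ (a product of $g$ copies of $\bbC$), and multiplication by a suitable element of this algebra supplies the required complex structure.

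The main step is to locate the totally imaginary generator. Write $F = E(a)$ with $a^2 \in E$ totally imaginary, so that under $\Phi$ each component $\varphi_i(a)$ is purely imaginary; after rescaling $a$ within $E\cdot a$ I may arrange $\varphi_i(a^2)$ to be a negative number for every $i$, and then normalise so that each $\varphi_i(a)$ has the form $\pm\sqrt{-1}\,|\varphi_i(a)|$. Setting $J_F$ to be the endomorphism of $V \otimes_{\bbQ} \bbR$ given by the action of the element $b \in F \otimes_{\bbQ}\bbR$ whose image under $\Phi$ is $(\sqrt{-1}, \dots, \sqrt{-1}) \in \bbC^g$ — that is, the element acting as multiplication by $i$ on each factor — I obtain $J_F^2 = -\id$ because $b^2 = -1$ in $F \otimes_{\bbQ} \bbR$. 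Concretely $b$ is the real-linear combination of powers of $a$ corresponding under $\Phi$ to $(\sqrt{-1},\dots,\sqrt{-1})$; since $\Phi$ is an algebra isomorphism onto $\bbC^g$ and $(\sqrt{-1},\dots,\sqrt{-1})$ squares to $(-1,\dots,-1)$, the relation $J_F^2 = -\id$ follows at once.

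It then remains to verify the two claimed properties. That $F \subset \End(V \otimes_{\bbQ}\bbR, J_F)$ holds because $F$ acts $\bbR$-linearly on $V \otimes_{\bbQ} \bbR$ through the commutative algebra $F \otimes_{\bbQ}\bbR$, and $J_F$ is itself given by the action of the central element $b$; since $F \otimes_{\bbQ} \bbR$ is commutative, every element of $F$ commutes with multiplication by $b$, hence is $J_F$-complex-linear, i.e. lies in $\End(V \otimes_{\bbQ}\bbR, J_F)$. That $J_F \in \End_{\bbR[G]}(V \otimes_{\bbQ} \bbR)$ holds because the homomorphism $F \to \End_{\bbQ[G]}(V)$ lands in the $G$-equivariant endomorphisms by hypothesis; extending scalars to $\bbR$, multiplication by any element of $F \otimes_{\bbQ}\bbR$, and in particular by $b$, commutes with the $G$-action, so $J_F$ is $G$-invariant.

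The step I expect to be the main obstacle is the normalisation ensuring $J_F^2 = -\id$ on the nose rather than merely $J_F^2$ being a totally negative scalar: one must check that the element $b$ with $\Phi(b) = (\sqrt{-1},\dots,\sqrt{-1})$ genuinely lies in $F \otimes_{\bbQ} \bbR$ and acts as a single real-linear operator, which is exactly guaranteed by surjectivity of $\Phi$ onto $\bbC^g$. Once the algebra isomorphism $\Phi$ is in hand, the rest is formal, so the care is really in setting up the CM-type and the identification $F \otimes_{\bbQ}\bbR \cong \bbC^g$ correctly.
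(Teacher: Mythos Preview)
Your proof is correct and follows essentially the same route as the paper: both pull back the standard complex structure on $\bbC^g$ via the CM-type isomorphism $\Phi: F\otimes_{\bbQ}\bbR \to \bbC^g$ and extend to $V \cong F^m$, and both deduce $G$-invariance from the fact that $J_F$ is given by multiplication by an element of the commutative algebra $F\otimes_{\bbQ}\bbR$, which acts $G$-equivariantly since $F\to\End_{\bbQ[G]}(V)$. The paper just spells this out more explicitly by decomposing $V\otimes_{\bbQ}\bbR=\bigoplus_i V\otimes_{E_i}\bbR$ and writing $J_F$ on each factor as $a\otimes \varphi_i(a^2)^{-1/2}$ (your element $b$), whereas you phrase the same computation via surjectivity of $\Phi$.
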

\begin{proof}
Choose a CM-type for $F$. By pulling back via
 $\Phi: F \otimes_{\mathbb{Q}} \mathbb{R} \to \mathbb{C}^g$ 
 we obtain a complex structure on $F \otimes_{\mathbb{Q}} \mathbb{R}$. 
Since $V$ is an $F$-module,  $V \cong F^m$, for some $m \in \mathbb{N}$.
We  let $J_{F}$ denote the product complex structure on $V \otimes_{\mathbb{Q}} \mathbb{R}$. 
Then $F \subset End(V \otimes_\mathbb{Q} \mathbb{R},J_{F})$.

Write $F = E(a)$, as above, and put $b = a^2 \in E$. 
Let $E_i:= \varphi_i (E)$. Note that $b$ acts on  $F \otimes_{E_i} \mathbb{R}$ 
by scalar multiplication with $\varphi_i(b)$.
 We have a $G$-invariant decomposition $V \otimes_{\mathbb{Q}} \mathbb{R} = \bigoplus_{i=1}^g V \otimes_{E_i}
\mathbb{R}$. With respect to this
decomposition $J_{F}$ acts as 
multiplication with $a \otimes \frac{1}{\sqrt{\varphi_i(b)}}$ on each factor. Since $a
\in End_{\mathbb{Q}[G]}(V)$, it follows that  $J_{F}$ is $G$-invariant. 
\end{proof}

If $V$ is an irreducible ${\mathbb{Q}[G]}$-module 
then $D= End_{\mathbb{Q}[G]}(V) $ is a finite
dimensional division algebra. Moreover, this division algebra
admits a  positive involution \cite[Proof of Prop. 3.2]{Johnson}. By a
classification of Albert either $D$  contains a
CM-field $F$ or $D$ is a totally real division algebra. 
(See \cite[\S1; \S 3; Prop. 3.1]{Johnson}, \cite{Shimura2} and \cite{Albert}). 
In the latter case, $End_{\mathbb{Q}[G]}(V \oplus V)$ has complex 
multiplication by a CM-field $F$. This has the following consequence: 

\begin{proposition}
\label{prop:CMdecomposition}
Let $V$ be a $\mathbb{Q}[G]$-module such that 
$V \tensor_{\bbQ} \bbR$ admits a $G$-invariant complex structure. 
Then there exists a decomposition of $V$ into $G$-submodules $W_{j}$,
such that $W_{j}$ admits complex multiplication by a CM-field $F_{j}$.
\end{proposition}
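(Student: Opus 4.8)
The plan is to reduce to the irreducible case just analysed via Albert's classification, and to use the hypothesis on $J$ only to control multiplicities. Since $\bbQ[G]$ is semisimple, I first decompose $V$ into isotypic components, $V = \bigoplus_{i} V_i^{\,n_i}$, where the $V_i$ are pairwise non-isomorphic irreducible $\bbQ[G]$-modules with multiplicities $n_i$. Writing $D_i = End_{\bbQ[G]}(V_i)$, the Wedderburn decomposition gives $End_{\bbQ[G]}(V) = \prod_i M_{n_i}(D_i)$, and hence, since endomorphism algebras base change and products are preserved, $End_{\bbR[G]}(V \tensor_{\bbQ} \bbR) = \prod_i M_{n_i}(D_i \tensor_{\bbQ} \bbR)$. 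The given $G$-invariant complex structure $J$ lies in this algebra, so it decomposes as a tuple $J = (J_i)$ with each component $J_i \in M_{n_i}(D_i \tensor_{\bbQ} \bbR)$ satisfying $J_i^2 = -\id$. In particular each factor $M_{n_i}(D_i \tensor_{\bbQ} \bbR)$ must contain a square root of $-\id$.

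Next I distinguish the two cases furnished by Albert's classification for each $D_i$. If $D_i$ contains a CM-field $F_i$, then the inclusion $F_i \hookrightarrow D_i = End_{\bbQ[G]}(V_i)$ exhibits each single copy of $V_i$ as a module with complex multiplication by $F_i$, and I take these copies as summands $W_j$. If instead $D_i$ is totally real (hence, by Albert's list, a totally real field $E_i$, this being the only positive-involution division algebra containing no CM-field), then $D_i \tensor_{\bbQ} \bbR \cong \bbR^{d_i}$ with $d_i = [E_i:\bbQ]$, so the factor is $M_{n_i}(D_i \tensor_{\bbQ} \bbR) \cong M_{n_i}(\bbR)^{d_i}$. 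Since $M_{n_i}(\bbR)$ contains an element squaring to $-\id$ only when $n_i$ is even, the existence of $J_i$ forces $n_i$ to be even. I then group the $n_i$ copies of $V_i$ into $n_i/2$ pairs $W_j = V_i \oplus V_i$; by the discussion preceding the proposition, $End_{\bbQ[G]}(V_i \oplus V_i) = M_2(E_i)$ has complex multiplication by a CM-field, explicitly $F_j = E_i(\sqrt{-1}) \hookrightarrow M_2(E_i)$, which is totally imaginary quadratic over the totally real $E_i$.

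Collecting the summands $W_j$ from all isotypic components yields the desired decomposition $V = \bigoplus_j W_j$ into $G$-submodules, each with complex multiplication by a CM-field. The one place where the hypothesis genuinely enters, and the main point to get right, is the parity argument in the totally real case. It rests on the identification $End_{\bbR[G]}(V \tensor_{\bbQ} \bbR) = \prod_i M_{n_i}(D_i \tensor_{\bbQ} \bbR)$, which guarantees that $J$ respects the block decomposition so that each $J_i^2 = -\id$, together with the elementary fact that $M_{n}(\bbR)$ admits no square root of $-\id$ for odd $n$ (immediate from $(\det J_i)^2 = \det(-\id) = (-1)^{n_i}$). Everything else is standard semisimple bookkeeping combined with the already-established input from Albert's classification.
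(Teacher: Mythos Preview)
Your proof is correct and follows essentially the same strategy as the paper's: decompose into irreducibles, invoke Albert's classification to separate the CM-containing case from the totally real case, and pair up the copies in the latter. The paper's own proof is terser and simply asserts that the multiplicity of an irreducible with totally real centraliser must be even; your determinant argument in $M_{n_i}(\bbR)$ supplies exactly the justification the paper leaves implicit.
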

\begin{proof}
In fact, $V = \bigoplus V_i$ where
$V_i$ are irreducible $\mathbb{Q}[G]$-modules. 
Since $V \otimes_{\mathbb{Q}} \mathbb{R}$ admits a 
$G$-invariant complex structure $J$,
the multiplicity of the $V_i$ with totally real centraliser 
$End_{\mathbb{Q}[G]}(V_i)$ is even in $V$.
\end{proof}

\subsection{Kummer varieties}
Let $\Lambda \subset V$ be a lattice, and $J$ a complex structure
on $V \tensor_{\bbQ} \bbR$. We denote by 
$$ T_{\Lambda,J}:= (V\otimes_{\mathbb{Q}} \mathbb{R},J)/ \Lambda \; $$
the associated complex torus. 


%

\begin{proposition}
\label{prop:geometricconsequence1}
Let $V$ be a $\mathbb{Q}[G]$-module, which has complex multiplication by $F$.  
Let $\Lambda \subset V$ be a $G$-invariant lattice.
Then the associated complex torus $T_{\Lambda,J_{F}}$ and its endomorphism ring $End(T_{\Lambda,J_{F}})$ may be defined over an algebraic number field $k$.
In particular, the induced linear action of $G$ on $T_{\Lambda,J_{F}}$ is defined
over $k$.  
\end{proposition}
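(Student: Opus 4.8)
The plan is to leverage the concrete construction of $J_F$ from Proposition \ref{prop:specialJ} together with the theory of CM-abelian varieties already recalled in the preliminaries. Recall that $V$ has complex multiplication by the CM-field $F$, so $V \cong F^m$ as an $F$-module, and the complex structure $J_F$ is the product structure pulled back via the chosen CM-type $\Phi: F \otimes_\bbQ \bbR \to \bbC^g$. The first step is to observe that the $G$-invariant lattice $\Lambda \subset V$ is, via the $F$-module structure, a lattice inside $F^m \cong V$, so that $(V \otimes_\bbQ \bbR, J_F)/\Lambda$ is precisely the kind of torus $X_F = \bbC^g/\Phi(\Lambda')$ considered earlier, or rather a product/power of such. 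Since $X_F$ and $End(X_F)$ may be defined over an algebraic number field (as recalled just before \S 4.2, citing \cite[Ch.5; Prop.\ 1.1]{Lang}), the torus $T_{\Lambda, J_F}$ inherits a model over a number field $k$.

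Concretely, I would first reduce to the CM case proper: since $F \hookrightarrow End_\bbQ(T_{\Lambda,J_F})$ and $2g = 2\dim T_{\Lambda,J_F}$ by construction (the rank of $V$ over $\bbQ$ equals $2g \cdot m$, matching the real dimension), the torus $T_{\Lambda,J_F}$ is an abelian variety with complex multiplication by $F$ in the sense defined in the preliminaries. The theory of complex multiplication then guarantees that any such abelian variety, and its full endomorphism ring, descends to an algebraic number field $k$. The key citation is \cite[Ch.5; Prop.\ 1.1]{Lang}; I would apply it to conclude both that $T_{\Lambda,J_F}$ is defined over $k$ and that $End(T_{\Lambda,J_F})$ is defined over $k$.

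The second main step is to transfer the $G$-action into this framework. By Proposition \ref{prop:specialJ}, we have $J_F \in End_{\bbR[G]}(V \otimes_\bbQ \bbR)$, meaning $G$ commutes with $J_F$, so $G$ acts by holomorphic automorphisms of $T_{\Lambda,J_F}$; and since $\Lambda$ is $G$-invariant and $G$ acts $\bbQ$-rationally on $V$, each $g \in G$ gives an element of $End(T_{\Lambda,J_F})$. The point is that the induced $G$-action is realised inside $End(T_{\Lambda,J_F})$, which we have just arranged to be defined over $k$. Hence each automorphism $g$ corresponds to a $k$-defined algebraic automorphism of the abelian variety, which gives the final assertion that the linear $G$-action is defined over $k$.

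The main obstacle I anticipate is making precise the compatibility between the \emph{analytic} $G$-action (commuting with $J_F$ on $V \otimes_\bbQ \bbR$) and the \emph{algebraic} $k$-structure on $End(T_{\Lambda,J_F})$: one must ensure that after descending $T_{\Lambda,J_F}$ and its endomorphism ring to $k$, the specific elements coming from $\mu(g)$, $g \in G$, really land in $\Aut_k$ rather than merely in $\Aut_{\bar k}$. This is handled by the statement that the full endomorphism ring (not just individual endomorphisms) descends to $k$, possibly after enlarging $k$ to a finite extension — which is harmless since we only claim definition over \emph{some} number field. The rank bookkeeping (verifying $2\dim T = [F:\bbQ]$ relative to the $F$-module structure on $V$) is routine but should be stated carefully, since the CM hypothesis in the preliminaries is phrased in exactly these dimensional terms.
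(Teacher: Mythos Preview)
Your argument has a genuine gap, precisely at the dimension bookkeeping you flagged as needing care. You assert $2g = 2\dim T_{\Lambda,J_F}$, but $V \cong F^m$ gives $\dim_{\mathbb{Q}} V = 2gm$, hence $\dim_{\mathbb{C}} T_{\Lambda,J_F} = gm$. Thus $T_{\Lambda,J_F}$ satisfies the CM hypothesis of the preliminaries (and of \cite[Ch.~5, Prop.~1.1]{Lang}) only when $m=1$; for $m>1$ that result does not apply to the whole torus, and your first step does not go through as stated.

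The paper repairs this by decomposing $V$ into one-dimensional $F$-subspaces $F_j$ and setting $\tilde\Lambda = \bigoplus_j (\Lambda \cap F_j)$. Each factor $T_{\Lambda_j, J_F}$ then genuinely has CM by $F$, so the product $X_{\tilde\Lambda}$ and its endomorphism ring descend to a number field $k_0$. Two further points must then be handled: first, $\Lambda$ need not split along the $F_j$, so one passes from $X_{\tilde\Lambda}$ to $X_\Lambda$ via an isogeny with finite torsion kernel, using the quotient lemma over a finite extension $k \supset k_0$; second, $G$ need not preserve $\tilde\Lambda$, so one only has $G \subset End_{\mathbb{Q}}(X_{\tilde\Lambda})$, whence $\ell g \in End_{k_0}(X_{\tilde\Lambda})$ for some integer $\ell$, and a descent through the covering together with a Galois-rationality argument is used to place $g$ itself in $End_k(X_\Lambda)$. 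Your second step --- observing $G \subset End(T_{\Lambda,J_F})$ directly because $\Lambda$ itself is $G$-invariant --- is in fact cleaner than the paper's detour through $\tilde\Lambda$ and would avoid the $\ell g$ manoeuvre, but it only pays off once you have independently established that $T_{\Lambda,J_F}$ and its full endomorphism ring are defined over a number field, which is exactly what the factor-wise decomposition is needed for.
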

\begin{proof}
%
As in the proof of Proposition \ref{prop:specialJ}, write $V$ as
a sum of 1-dimensional $F$-vector subspaces $F_{j}$. By construction,
$F_{j} \tensor_{ \bbQ }\bbR$ is a $J_{F}$-subspace of $V \tensor_{ \bbQ} \bbR$.
Let $\Lambda_j = \Lambda  \cap F_{j}$.
Then $\tilde{\Lambda}= \bigoplus_{j=1}^m \Lambda_j$ is a lattice in $V$, 
which is contained in $\Lambda$. Define $X_{j} = T_{\Lambda_{j},J_{F}}$.
Then $X_{j}$ is an abelian variety with complex multiplication by $F$, and
$$ X_{\tilde{\Lambda}} =  \bigoplus_{j =1}^m X_{j}$$
decomposes as a direct product of abelian varieties, which are defined over
an algebraic number field. 
Therefore, $X_{\tilde{\Lambda}}$ and its endomorphism ring $End(X)_{\tilde{\Lambda}}$ 
may be defined over an algebraic number field $k_{0}$.

Consider the covering 
$X_{\tilde{\Lambda}} \to X_{\Lambda}$. Since the elements of the finite kernel are 
algebraic over $\bbQ^{a}$, $X_{\Lambda}$ and the covering map may be defined 
over an algebraic number field $k$ (using the quotient lemma),  
$k$ containing $k_{0}$.
Since $G \subset End_{\mathbb{Q}}(X_{\tilde{\Lambda}})$, 
there exists an $\ell \in \mathbb{N}$, such that $\ell g \in End(X_{\tilde{\Lambda}})$,
and, therefore, the linear action of $\ell g$ on $X_{\tilde{\Lambda}}$ is defined over 
$k_{0}$.
Hence we get a diagram
\begin{eqnarray*}
\xymatrix@1{
X_{\tilde{\Lambda}} \ar[r]^{\ell g} \ar[d] & X_{\tilde{\Lambda}} \ar[d] \\
X_{\Lambda} \ar[r]^{\ell g}  & X_{\Lambda}
} \; ,
\end{eqnarray*}
where the upper and the downward maps are $k$-defined. By the universal 
property of the quotient,  
$\ell g \in End_k(X_{\Lambda})$. By the Galois-criterion for rationality, 
it follows that also $g \in End_k(X_{\Lambda})$.
\end{proof}

Thus, the following is a consequence of Proposition \ref{prop:CMdecomposition}.

\begin{proposition}
\label{prop:geometricconsequence2}
Let $V$ be a $\mathbb{Q}[G]$-module, such that $V \tensor_{\bbQ} \bbR$
admits a $G$-invariant complex structure. Let $\Lambda \leq V$ be
a $G$-invariant lattice. Then there exists a complex structure 
$J \in {\mathcal D}(G)_{p}$ such that the
complex torus $T_{\Lambda,J}$ and its endomorphism ring 
 $End(T_{\Lambda,J})$ may be defined over an algebraic number field $k$.
\end{proposition}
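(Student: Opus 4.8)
The goal is to prove Proposition~\ref{prop:geometricconsequence2}: given a $\bbQ[G]$-module $V$ whose real form $V \tensor_{\bbQ} \bbR$ admits a $G$-invariant complex structure, and a $G$-invariant lattice $\Lambda \leq V$, there exists a \emph{polarisable} $G$-invariant complex structure $J \in \mathcal{D}(G)_p$ such that the torus $T_{\Lambda, J}$ and its endomorphism ring are defined over an algebraic number field. The plan is to reduce this statement to the complex-multiplication constructions already established, by first decomposing $V$ and then building $J$ piece by piece.

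First I would apply Proposition~\ref{prop:CMdecomposition} to the module $V$. Since $V \tensor_{\bbQ} \bbR$ carries a $G$-invariant complex structure, that proposition yields a decomposition $V = \bigoplus_j W_j$ into $G$-submodules, where each $W_j$ admits complex multiplication by a CM-field $F_j$. On each factor $W_j$, Proposition~\ref{prop:specialJ} then supplies a $G$-invariant complex structure $J_{F_j}$ on $W_j \tensor_{\bbQ} \bbR$ (coming from a chosen CM-type), which moreover lies in $End_{\bbR[G]}(W_j \tensor_{\bbQ} \bbR)$. Taking the direct sum $J := \bigoplus_j J_{F_j}$ produces a $G$-invariant complex structure on all of $V \tensor_{\bbQ} \bbR$, hence an element of $\mathcal{D}(G)$. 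The key point is that this particular $J$, being assembled from CM-data, has enough built-in symmetry that Proposition~\ref{prop:geometricconsequence1} applies to each summand.

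Next I would address the number-field descent. For each $j$, restrict the lattice via $\Lambda_j = \Lambda \cap W_j$ and apply Proposition~\ref{prop:geometricconsequence1} to the CM-module $W_j$ with its lattice $\Lambda_j$: this gives that $T_{\Lambda_j, J_{F_j}}$ and its endomorphism ring, together with the induced $G$-action, are defined over an algebraic number field. As in the proof of Proposition~\ref{prop:geometricconsequence1}, the sublattice $\tilde\Lambda = \bigoplus_j \Lambda_j$ has finite index in $\Lambda$, so $T_{\tilde\Lambda, J}$ is (up to the finite isogeny) a product of these CM-abelian varieties and is defined over a number field $k_0$; the covering $T_{\tilde\Lambda, J} \to T_{\Lambda, J}$ has finite algebraic kernel, and invoking the quotient lemma together with the Galois criterion for rationality descends $T_{\Lambda, J}$, its endomorphism ring, and the $G$-action to an algebraic number field $k$. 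This also shows $T_{\Lambda,J}$ is an abelian variety, hence $J$ is polarisable and lies in $\mathcal{D}(G)_p$.

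The main obstacle I anticipate is ensuring that a single complex structure $J$ simultaneously achieves both desired properties — polarisability \emph{and} number-field definability — rather than obtaining two different structures for the two requirements. The resolution is exactly that the CM-construction delivers both at once: the CM-abelian varieties $X_{F_j}$ are automatically polarisable (Riemann's criterion via the CM-type, as recorded in the preliminaries), and they are simultaneously definable over a number field because of complex multiplication. A secondary technical point is verifying that $G$-invariance of the global $J$ is preserved under the decomposition and that the finite-index passage from $\tilde\Lambda$ to $\Lambda$ does not destroy either property; both follow because the relevant kernels are finite and algebraic over $\bbQ$, so the quotient lemma and Proposition~\ref{prop:geometricconsequence1}'s descent argument carry over verbatim to the product situation.
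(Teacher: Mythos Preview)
Your proposal is correct and follows essentially the same route as the paper: the paper presents this proposition simply as ``a consequence of Proposition~\ref{prop:CMdecomposition}'', leaving implicit exactly the steps you spell out --- decompose $V$ into CM-submodules $W_j$, build $J$ as the direct sum of the CM-structures $J_{F_j}$ from Proposition~\ref{prop:specialJ}, and then invoke Proposition~\ref{prop:geometricconsequence1} on each piece together with the finite-isogeny passage from $\tilde\Lambda=\bigoplus_j(\Lambda\cap W_j)$ to $\Lambda$. Your handling of the polarisability and number-field descent is exactly what the paper intends.
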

Note, as follows from the quotient lemma, that the geometric quotient 
$$ K =   T_{\Lambda,J} /G $$ 
attains the structure of a $k$-defined projective variety.
It is called a Kummer-variety for $G$.\\

Combining the last result  with the proof of 
Proposition \ref{prop:constructX} we arrive at:
\begin{proposition}
Let $\Gamma$ be a virtually abelian K\"ahler group. Then there exists a 
non-singular complex projective
variety $X$ defined over a number field $k$ with $\pi_1(X)=\Gamma$.
\end{proposition}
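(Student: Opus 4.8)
The plan is to combine the construction from Proposition~\ref{prop:constructX} with the rationality results established in this final section. The statement essentially asserts that the manifold $X$ built in Section~\ref{construction} can be realised as a projective variety defined over a number field, provided we feed the construction a suitably arithmetic complex structure and torus. The strategy is therefore to trace through the construction of $X$ and verify that each ingredient can be chosen $k$-rationally.

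First I would recall the structure of the construction in Proposition~\ref{prop:constructX}. Starting from the exact sequence~\eqref{eq:fes} and the characteristic representation $\mu: G \to \GL(n,\bbZ)$, we set $V = \bbZ^n \otimes \bbQ$, a $\bbQ[G]$-module, with $G$-invariant lattice $\Lambda = \bbZ^n$. Since $\Gamma$ is K\"ahler, Proposition~\ref{vak_prop_2} tells us that $V \tensor_{\bbQ} \bbR$ admits a $G$-invariant complex structure. Now I would invoke Proposition~\ref{prop:geometricconsequence2}: this produces a polarisable $G$-invariant complex structure $J \in {\mathcal D}(G)_{p}$ for which the torus $T = T_{\Lambda,J}$ \emph{and} its endomorphism ring, in particular the linear $G$-action, are defined over some algebraic number field $k$. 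Because $J$ is polarisable, $T$ is an abelian variety, and by Lemma~\ref{lemma:algebraicquotient} the geometric quotient $T/G$ is a $k$-defined projective variety.

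The remaining work is to fold in the factor $\tilde Y$ carrying the finite-group fundamental group. By Serre's result \cite[Proposition~15]{serre}, there is a smooth projective variety $Y$ with $\pi_1(Y) = G$; I would note that such $Y$ can itself be taken to be defined over a number field (indeed over $\overline{\bbQ}$), with its $G$-covering $\tilde Y \to Y$ and the $G$-action defined over that field. Enlarging $k$ so that it contains the field of definition of both $T$ (with its $G$-action) and of $Y$, the diagonal $G$-action on $T \times \tilde Y$ is $k$-defined, free, and acts on a $k$-defined projective variety. Applying Lemma~\ref{lemma:algebraicquotient} once more, the quotient
$$ X = (T \times \tilde Y)/G $$
is a smooth projective variety defined over $k$. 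The fundamental group computation $\pi_1(X) = \Gamma$ is identical to that in Proposition~\ref{prop:constructX}, since the complex-analytic quotient underlying the $k$-variety $X$ coincides with the manifold built there; smoothness follows because the $G$-action is free.

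The main obstacle, as I see it, is the bookkeeping of fields of definition for the factor $\tilde Y$ and its $G$-action, and ensuring a \emph{common} number field $k$ over which everything — $T$ with its $G$-action, $Y$ with its $G$-covering, and hence the diagonal action on the product — is simultaneously defined. Serre's construction gives projectivity of $Y$ but one must check (or cite) that it descends to a number field together with the $G$-structure; this is where the argument is most delicate and where I would lean on the categorical quotient property recorded after Lemma~\ref{lemma:algebraicquotient} to push the $G$-equivariant $k$-structure through the quotient. The verification that $X$ is nonsingular and that $\pi_1(X) = \Gamma$ is then routine, being inherited verbatim from Section~\ref{construction}.
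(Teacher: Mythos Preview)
Your outline follows the paper's strategy closely, but there is a genuine gap concerning the $G$-action on $T$. You invoke Proposition~\ref{prop:geometricconsequence2} to get the \emph{linear} $G$-action on $T_{\Lambda,J}$ defined over a number field, and then assert without further comment that ``the diagonal $G$-action on $T\times\tilde Y$ is $k$-defined''. But in the construction of Proposition~\ref{prop:constructX} (via Lemma~\ref{lemma:vakg_emb}) the group $G$ acts on $T$ by \emph{affine} transformations: lifting $g\in G$ to $\gamma\in\Gamma$ and pushing through $\rho:\Gamma\to\bbC^k\rtimes U(k)$ yields, in general, a nontrivial translation part determined by the characteristic cocycle $\eta\in Z^2_\mu(G,\bbZ^{2k})$ of the extension~\eqref{eq:fes}. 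If $\Gamma$ is not a semidirect product over $\bbZ$ this translation part cannot be absorbed, and nothing you have written shows it is $k$-rational.

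The missing step is the observation the paper makes explicitly: since $H^2_\mu(G,\bbQ^{2k})=0$, the extension splits over $\bbQ$, so one may arrange (by the choice of splitting in~\eqref{eq:splitting}) that the translation parts of the $G$-action lie in $\bbQ^{2k}/\bbZ^{2k}$, i.e.\ in the torsion subgroup of $T_{\Lambda,J}$. Torsion points of an abelian variety defined over $k_0$ are algebraic, hence defined over a finite extension $k\supset k_0$; only then is the full affine $G$-action on $T$ $k$-defined, and Lemma~\ref{lemma:algebraicquotient} applies. Incidentally, what you flag as ``the main obstacle'' --- the field of definition of $\tilde Y$ with its $G$-action --- is in fact the easy part: Serre's construction already yields $\tilde Y$ and the $G$-action over $\bbQ$ (see the references in the paper's proof). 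The delicate point is the one you overlooked.
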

\begin{proof}
Let $\Gamma$ be such a K\"ahler group, and $\Lambda \cong \bbZ^k$ 
a normal subgroup with $G= \Gamma / \Lambda$. The characteristic
representation $\mu$ of $G$ turns $V = \Lambda \tensor_{\bbZ} \bbQ$ 
into a $\bbQ[G]$-module. Since $\Gamma$ is K\"ahler, $V \tensor_{\bbQ } \bbR$
has a $G$-invariant complex structure. We remark that $\Gamma$ can be mapped
homomorphically to $V \ltimes_{\mu} G$ (analogously as in \eqref{eq:extequiv} and
\eqref{eq:splitting}).

As shown in Proposition \ref{prop:constructX},
there exists a complex manifold $X$
with $\pi_1(X)= \Gamma$, such that 
\begin{eqnarray*}
\xymatrix@1{
\tilde{X}= \mathbb{C}^k  \times \tilde{Y} \ar[d]
\\
T  \times \tilde{Y} \ar[d] \\ 
X =  (T  \times \tilde{Y}) / G
} \; 
\end{eqnarray*}
are holomorphic coverings, where $T = \mathbb{C}^k / \Lambda$ is a complex torus.
Note that $G$ acts on $T$ by affine transformations. The linear part of the action is
given by the representation $\mu: G \ra End(T)$. Using Proposition \ref{prop:geometricconsequence2}, 
we may assume in the construction that $T = T_{\Lambda,J}$ 
is an abelian variety defined over an algebraic number field $k_{0}$, 
such that the linear parts of $G$ act by elements in $\Aut_{k_{0}}(T)$.
Moreover, as remarked above, since $\Gamma$ splits over $\bbQ$, 
the translation parts of the $G$-action may be assumed to 
be contained in the torsion group of  $T_{\Lambda,J}$. Therefore,
there exists an algebraic number field $k$,  such that $G$ act by elements
in $\Aut_{k}(T)$.

Note now that $\tilde{Y}$ is also a complex projective variety defined over $\bbQ$,
with $G \subset \Aut_{\bbQ}(\tilde{Y})$, see the proofs given in  
\cite{serre} or \cite{Shafa}.
By Lemma \ref{lemma:algebraicquotient},
$X$ is a projective variety, which may be defined over the number field $k$.
\end{proof}


\addcontentsline{toc}{chapter}{Literaturverzeichnis}
\bibliographystyle{plain}
\pagestyle{empty}
\bibliography{kpgs}
\end{document}